\theoremstyle{plain}
\newtheorem{theorem}{Theorem}
\newtheorem{lemma}[theorem]{Lemma}
\newtheorem{conjecture}[theorem]{Conjecture}
\theoremstyle{definition}
\theoremstyle{remark}
\DeclareMathOperator{\Circ}{Circ}
\tikzset{
    edge/.style={-{Latex[scale=1.7]}},
    dedge/.style={{Latex[scale=1.7]}-{Latex[scale=1.7]}},
}
\title{Two families of circulant nut graphs}
\author{Ivan Damnjanovi\'c\thanks{The author is supported by Diffine LLC.}\\
\small University of Ni\v s, Faculty of Electronic Engineering\\[-0.4ex]
\small\tt ivan.damnjanovic@elfak.ni.ac.rs\\
\small Diffine LLC\\[-0.4ex]
\small\tt ivan@diffine.com}
\begin{document}

\maketitle

\begin{abstract}
A circulant nut graph is a non-trivial simple graph whose adjacency matrix is a circulant matrix of nullity one such that its non-zero null space vectors have no zero elements. The study of circulant nut graphs was originally initiated by Ba\v{s}i\'c et al.\ [Art Discrete Appl.\ Math.\ \textbf{5(2)} (2021) \#P2.01], where a conjecture was made regarding the existence of all the possible pairs $(n, d)$ for which there exists a $d$-regular circulant nut graph of order $n$. Later on, it was proved by Damnjanovi\'c and Stevanovi\'c [Linear Algebra Appl.\ \textbf{633} (2022) 127--151] that for each odd $t \ge 3$ such that $t\not\equiv_{10}1$ and $t\not\equiv_{18}15$, the $4t$-regular circulant graph of order $n$ with the generator set $\{ 1, 2, 3, \ldots, 2t+1 \} \setminus \{t\})$ must necessarily be a nut graph for each even $n \ge 4t + 4$. In this paper, we extend these results by constructing two families of circulant nut graphs. The first family comprises the $4t$-regular circulant graphs of order $n$ which correspond to the generator sets $\{1, 2, \ldots, t-1\} \cup \left\{\frac{n}{4}, \frac{n}{4} + 1 \right\} \cup \left\{\frac{n}{2} - (t-1), \ldots, \frac{n}{2} - 2, \frac{n}{2} - 1 \right\}$, for each odd $t \in \mathbb{N}$ and $n \ge 4t + 4$ divisible by four. The second family consists of the $4t$-regular circulant graphs of order $n$ which correspond to the generator sets $\{1, 2, \ldots, t-1\} \cup \left\{\frac{n+2}{4}, \frac{n+6}{4} \right\} \cup \left\{\frac{n}{2} - (t-1), \ldots, \frac{n}{2} - 2, \frac{n}{2}-1 \right\}$, for each $t \in \mathbb{N}$ and $n \ge 4t + 6$ such that $n \equiv_{4} 2$. We prove that all of the graphs which belong to these families are indeed nut graphs, thereby fully resolving the $4t$-regular circulant nut graph order--degree existence problem whenever $t$ is odd and partially solving this problem for even values of $t$ as well.

\bigskip\noindent
{\bf Mathematics Subject Classification:} 05C50, 12D05, 13P05, 11C08. \\
{\bf Keywords:} Circulant graphs; Nut graphs; Graph spectra; Graph eigenvalues; Cyclotomic polynomials.
\end{abstract}

\section{Introduction}

In this paper we will consider all graphs to be undirected, finite, simple and non-null. Thus, every graph will have at least one vertex and there shall be no loops or multiple edges. Also, for convenience, we will take that each graph of order $n$ has the vertex set $\{0, 1, 2, \ldots, n-1\}$.

A graph $G$ is considered to be a circulant graph if its adjacency matrix $A$ has the form
\[
A = \begin{bmatrix}
    a_0 & a_1 & a_2 & \cdots & a_{n-1}\\
    a_{n-1} & a_0 & a_1 & \cdots & a_{n-2}\\
    a_{n-2} & a_{n-1} & a_0 & \cdots & a_{n-3}\\
    \vdots & \vdots & \vdots & \ddots & \vdots\\
    a_1 & a_2 & a_3 & \dots & a_0
\end{bmatrix}.
\]
Here, we clearly have $a_0 = 0$, as well as $a_j = a_{n-j}$ for all $j = \overline{1, n}$. A concise way of describing a circulant graph is by taking into consideration the set of all the values $1 \le j \le \frac{n}{2}$ such that $a_j = a_{n-j} = 1$. We shall refer to this set as the generator set of a circulant graph and we will use $\mathrm{Circ}(n, S)$ to denote the circulant graph of order $n$ whose generator set is $S$.

A nut graph is a non-trivial graph whose adjacency matrix has nullity one and is such that its non-zero null space vectors have no zero elements, as first described by Sciriha in \cite{Sciriha}. Bearing this in mind, a circulant nut graph is simply a nut graph whose adjacency matrix additionally represents a circulant matrix. Ba\v{s}i\'c et al.\ \cite{Basic} initiated the study of these graphs by providing several results, alongside the following two conjectures.

\begin{conjecture}[Ba\v{s}i\'c et al.\ \cite{Basic}]\label{basic_conjecture_1}
For every even $n,\, n \ge 16$, there exists a circulant nut graph $\mathrm{Circ}(n, \{s_1, s_2, s_3, s_4, s_5, s_6 \})$ of degree $12$.
\end{conjecture}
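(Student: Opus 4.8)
The plan is to establish the conjecture as the $t = 3$ instance of the two families announced above, since a $12$-regular circulant graph has exactly six generators and $12 = 4 \cdot 3$. First I would reduce the nut property to a single eigenvalue condition. The eigenvalues of $\mathrm{Circ}(n, S)$ are $\lambda_j = \sum_{s \in S}(\omega^{js} + \omega^{-js})$ with $\omega = e^{2\pi i / n}$ and $j = 0, 1, \ldots, n - 1$, the eigenvector attached to $\lambda_j$ being $(1, \omega^{j}, \ldots, \omega^{(n-1)j})$. Because $\lambda_j = \lambda_{n-j}$, every zero eigenvalue with $j \notin \{0, n/2\}$ occurs in a conjugate pair and contributes at least $2$ to the nullity; as $\lambda_0 = 2|S| > 0$, nullity one can only be achieved by forcing $\lambda_{n/2} = 0$ and $\lambda_j \neq 0$ for all $j \in \{1, \ldots, n/2 - 1\}$. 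The associated null vector is then $(1, -1, 1, \ldots, -1)$, which has no zero entry, so the nut property is equivalent to this condition. Phrased through the generator polynomial $P_S(x) = \sum_{s \in S}(x^{s} + x^{n-s})$, the requirement is that $\Phi_2(x) \mid P_S(x)$ while $\Phi_d(x) \nmid P_S(x)$ for every $d \mid n$ with $d > 2$.

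Next I would record the two candidate sets, namely $S_1 = \{1, 2, \tfrac{n}{4}, \tfrac{n}{4} + 1, \tfrac{n}{2} - 2, \tfrac{n}{2} - 1\}$ when $n \equiv_4 0$ and $S_2 = \{1, 2, \tfrac{n+2}{4}, \tfrac{n+6}{4}, \tfrac{n}{2} - 2, \tfrac{n}{2} - 1\}$ when $n \equiv_4 2$, and evaluate $\lambda_j$ by exploiting the symmetric placement of the generators. Setting $\theta_j = 2\pi j / n$, the identities $(\tfrac{n}{2} - 1)\theta_j = \pi j - \theta_j$ and $(\tfrac{n}{4})\theta_j = \tfrac{\pi j}{2}$ collapse the top pair to $(-1)^j(2\cos\theta_j + 2\cos 2\theta_j)$ and the middle pair to terms governed by $\cos(\tfrac{\pi j}{2})$ and $\cos(\tfrac{\theta_j}{2})$, so that $\lambda_j$ splits neatly according to $j \bmod 4$. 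For $S_1$ a short computation gives $\lambda_j = \pm 2\sin\theta_j$ for odd $j$, and, after $\cos 2\theta_j = 2\cos^2\theta_j - 1$, the factorisations $\lambda_j = 2(4\cos\theta_j - 1)(\cos\theta_j + 1)$ for $j \equiv_4 0$ and $\lambda_j = 2(4\cos\theta_j - 3)(\cos\theta_j + 1)$ for $j \equiv_4 2$.

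These closed forms make the first family immediate: for odd $j$ the factor $\sin\theta_j$ vanishes among $1 \le j \le n-1$ only at $j = n/2$, which is excluded since $n/2$ is even here; for even $j$ the factor $\cos\theta_j + 1$ vanishes exactly at $j = n/2$, whereas $\cos\theta_j \in \{\tfrac14, \tfrac34\}$ is impossible because $\cos(2\pi j/n)$ is the cosine of a rational multiple of $\pi$ and Niven's theorem limits such values to $\{0, \pm\tfrac12, \pm 1\}$. Hence $\lambda_{n/2} = 0$ is the only vanishing eigenvalue and $\mathrm{Circ}(n, S_1)$ is a nut graph. For $S_2$ the bookkeeping is parallel, but now $n/2$ is odd, so the zero must arise from the odd-$j$ branch; there $\lambda_j$ reduces to a multiple of $\sin(\tfrac{\theta_j}{2})\cos^2(\tfrac{\theta_j}{2})$, which over $1 \le j \le n - 1$ vanishes only at $j = n/2$, exactly where $\cos(\tfrac{\theta_j}{2}) = 0$.

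The main obstacle is the even-$j$ branch for the second family, where $\lambda_j$ becomes $4\cos(\tfrac{\theta_j}{2})\,(8\cos^3(\tfrac{\theta_j}{2}) \pm 2\cos^2(\tfrac{\theta_j}{2}) - 6\cos(\tfrac{\theta_j}{2}) \mp 1)$, a genuine cubic in $\cos(\tfrac{\theta_j}{2})$ for which Niven's theorem no longer suffices. Writing $u = 2\cos(\tfrac{\theta_j}{2}) = 2\cos(\tfrac{\pi j}{n})$, which is an algebraic integer, the cubic factor becomes $2u^3 \pm u^2 - 6u \mp 2$; since this polynomial has no rational root it is irreducible over $\mathbb{Q}$, so the minimal polynomial of any root would have to be $u^3 \pm \tfrac12 u^2 - 3u \mp 1$, which lacks integer coefficients and therefore cannot be the minimal polynomial of an algebraic integer --- a contradiction. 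Thus no even index other than $n/2$ is a root, and $n/2$ being odd does not arise in this branch, so $\mathrm{Circ}(n, S_2)$ is a nut graph as well. Finally, every even $n \ge 16$ satisfies either $n \equiv_4 0$ with $n \ge 16 = 4 \cdot 3 + 4$ or $n \equiv_4 2$ with $n \ge 18 = 4 \cdot 3 + 6$, so one of $S_1, S_2$ always supplies the required $12$-regular circulant nut graph, settling the conjecture.
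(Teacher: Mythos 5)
Your proof is correct, and it reaches the conjecture by a genuinely different and markedly more elementary route than the paper. You use the same two generator sets (the $t=3$ members of the families $\mathcal{D}'_{t,n}$ and $\mathcal{D}''_{t,n}$) and the same reduction of the nut property to ``$P(\omega^{n/2})=0$ and $P(\omega^j)\neq 0$ for $1\le j\le \frac n2-1$'' (this is Lemma~\ref{damnjanovic_lemma_1}; your eigenvalue-pairing justification of it is sound). The divergence is in how the non-vanishing is established. The paper proves Theorems~\ref{main_theorem_1} and~\ref{main_theorem_2} for all admissible $t$ by converting $P(\zeta)=0$ into divisibility of the six-term polynomials $Q_t,R_t$ (resp.\ $U_t,W_t$) by cyclotomic polynomials, and then excluding that via Lemma~\ref{square_free}, the Filaseta--Schinzel Theorem~\ref{filaseta}, and computer-generated remainder tables in the appendices. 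By fixing $t=3$ you instead factor the eigenvalues in closed trigonometric form; I checked your formulas and they are right: for $\mathcal{S}'_{3,n}$ the even-index eigenvalues are $2(4\cos\theta_j-1)(\cos\theta_j+1)$ and $2(4\cos\theta_j-3)(\cos\theta_j+1)$ and the odd-index ones are $\mp 2\sin\theta_j$; for $\mathcal{S}''_{3,n}$ the odd branch is $\mp 8\sin(\theta_j/2)\cos^2(\theta_j/2)$ and the even branch is $4c\,(8c^3\pm 2c^2-6c\mp 1)$ with $c=\cos(\theta_j/2)$. Niven's theorem legitimately rules out $\cos\theta_j\in\{\tfrac14,\tfrac34\}$, and your algebraic-integer argument for the cubics is valid: $u=2\cos(\pi j/n)$ is an algebraic integer, each cubic $2u^3\pm u^2-6u\mp 2$ has no rational root (the candidates $\pm1,\pm2,\pm\tfrac12$ all fail), hence is irreducible over $\mathbb{Q}$, and its monic normalization has the non-integral coefficient $\pm\tfrac12$, so it cannot be the minimal polynomial of an algebraic integer. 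What each approach buys: the paper's cyclotomic machinery is uniform in $t$ and thereby settles the full order--degree existence problem for $4t$-regular circulants with $t$ odd, at the cost of heavy technology and symbolic computation; your argument is self-contained, computer-free and short, but it is intrinsically tied to degree $12$, since for larger $t$ the eigenvalues become polynomials of unbounded degree in $\cos\theta_j$ with no comparable factorization, which is precisely where the paper's lacunary-polynomial apparatus becomes necessary. The only cosmetic omission is a one-line check that the six generators are distinct and lie in $\left\{1,\dots,\frac n2-1\right\}$ for $n\ge 16$, resp.\ $n\ge 18$, which is the paper's well-definedness remark at the start of Section~\ref{preliminaries}.
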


\begin{conjecture}[Ba\v{s}i\'c et al.\ \cite{Basic}]
For every $d$, where $d \equiv_4 0$, and for every even $n,\, n \ge d+4$, there exists a circulant nut graph $\mathrm{Circ}(n, \{a_1, a_2, a_3, \ldots, a_{\frac{d}{2}}\})$ of degree $d$.
\end{conjecture}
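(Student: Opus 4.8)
The plan is to convert the nut-graph requirement into a cyclotomic divisibility condition and then to exhibit, for each admissible pair, an explicit generator set meeting it. Writing $f(x) = \sum_{s \in S}\left(x^{s} + x^{n-s}\right)$, the eigenvalues of $\mathrm{Circ}(n, S)$ are exactly the numbers $f\!\left(\omega^{k}\right)$ with $\omega = e^{2\pi i / n}$ and $0 \le k \le n-1$. Because $f$ has integer coefficients and $f\!\left(\omega^{k}\right) = f\!\left(\omega^{n-k}\right)$, a zero eigenvalue at any $k \notin \{0, n/2\}$ is automatically accompanied by a second one at $n-k$; hence nullity one can occur only when $n$ is even, with the kernel forced to the index $k = n/2$. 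First I would note that the associated eigenvector is $\left((-1)^{j}\right)_{j=0}^{n-1}$, which has no zero coordinate, so the awkward no-vanishing-entry clause of the nut condition holds for free. What is left is purely algebraic: $\mathrm{Circ}(n, S)$ is a nut graph exactly when $(x+1) \mid f(x)$ — equivalently, when $S$ has equally many odd and even elements, which already forces $\lvert S \rvert = d/2$ to be even and hence $d \equiv_{4} 0$ — together with $\Phi_{m}(x) \nmid f(x)$ for every divisor $m > 2$ of $n$.

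With this reduction the strategy becomes constructive. I would take $S$ to consist of two mirror-image consecutive blocks $\{1, \ldots, t-1\}$ and $\left\{\frac{n}{2} - (t-1), \ldots, \frac{n}{2} - 1\right\}$, where $t = d/4$, together with a pair of generators placed near $\frac{n}{4}$. The reason for this shape is that on a root of unity $\zeta = \omega^{k}$ the two symmetric blocks collapse into $\left(1 + \zeta^{n/2}\right)\sum_{j=1}^{t-1}\left(\zeta^{j} + \zeta^{-j}\right) = \left(1 + (-1)^{k}\right)\sum_{j=1}^{t-1}\left(\zeta^{j} + \zeta^{-j}\right)$, which annihilates every odd index $k$ in one stroke; for such $k$ the value $f(\zeta)$ reduces to the two middle generators alone, and for a consecutive pair near $\frac{n}{4}$ this is a nonzero multiple of $\sin\frac{2\pi k}{n}$. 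The entire substance of the condition $\Phi_{m}(x) \nmid f(x)$ is thereby pushed onto the even indices, equivalently onto the divisors $m$ of $\frac{n}{2}$, where I would close $f(\zeta)$ in terms of the geometric sums of the blocks and the middle contribution and then bound it away from zero using standard evaluations of cyclotomic polynomials on roots of unity.

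Carrying out that even-index analysis is routine once the middle pair can be chosen consecutive, and this is possible in three of the four residue situations. When $t$ is odd the side blocks are internally parity-balanced, so the middle pair $\left\{\frac{n}{4}, \frac{n}{4} + 1\right\}$ for $4 \mid n$ and $\left\{\frac{n+2}{4}, \frac{n+6}{4}\right\}$ for $n \equiv_{4} 2$ both restore balance while keeping $f$ in telescoped form; this settles every even $n \ge 4t + 4$ for odd $t$, subsuming the range already covered by the Damnjanovi\'c--Stevanovi\'c family. When $t$ is even and $n \equiv_{4} 2$ the two side blocks carry opposite parity defects that cancel, so a consecutive middle pair again works and the same cyclotomic bookkeeping goes through. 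I expect these cases to succumb to careful but unremarkable manipulation of geometric sums.

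The hard part, and the real obstacle to the full conjecture, is the remaining class: $t$ even with $4 \mid n$. Here both symmetric blocks terminate on odd values and jointly contribute a parity surplus of $+2$, so any admissible $S$ must make the middle pair consist of two even generators. A consecutive middle pair is then impossible, and the clean telescoping that drove the even-index computation is destroyed. Correcting the parity with a spaced pair such as $\left\{\frac{n}{4}, \frac{n}{4} + 2\right\}$ or $\left\{\frac{n}{4} - 1, \frac{n}{4} + 1\right\}$ reintroduces the danger that some $\Phi_{m}$ with $m \mid \frac{n}{2}$ divides $f$, and ruling this out appears to demand either a substantially more delicate estimate for the separated generators or a different block design altogether for this residue class. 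I therefore anticipate that the construction above resolves the conjecture completely for odd $t$ and for even $t$ with $n \equiv_{4} 2$, and that precisely the even-$t$, $4 \mid n$ cases are where a new idea is required and where the method, as it stands, leaves the problem open.
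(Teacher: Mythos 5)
You have, in effect, rediscovered the paper's two constructions: your generator sets are exactly the families $\mathcal{S}'_{t,n}$ (side blocks $\{1,\ldots,t-1\}$ and $\{\frac{n}{2}-(t-1),\ldots,\frac{n}{2}-1\}$ plus the consecutive middle pair $\{\frac{n}{4},\frac{n}{4}+1\}$ or $\{\frac{n+2}{4},\frac{n+6}{4}\}$), your reduction to ``equally many odd and even generators plus $P(\omega^j)\neq 0$ for $j=\overline{1,\frac{n}{2}-1}$'' is the paper's Lemma~\ref{damnjanovic_lemma_1}, and your case partition (odd $t$ for all even $n$; even $t$ with $n\equiv_4 2$; even $t$ with $4\mid n$ left unresolved) matches the paper's coverage exactly, including the observation that odd indices are killed by the $(1+\zeta^{n/2})$ factor and reduce $P(\zeta)$ to the middle pair. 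But there is a genuine gap at the step you dismiss as ``routine'' and ``unremarkable manipulation of geometric sums'': after telescoping, the even-index condition becomes the statement that the six-term lacunary polynomials $Q_t, R_t$ (and $U_t, W_t$ in the $n\equiv_4 2$ case) have no root of unity among their roots other than $\pm 1$ (and $\pm i$, eighth roots in the second family). This is an algebraic, not an analytic, question: the values $P(\zeta)$ are algebraic numbers that cannot be ``bounded away from zero using standard evaluations,'' and indeed the earlier Damnjanovi\'c--Stevanovi\'c family (Theorem~\ref{damnjanovic_theorem_1}) genuinely fails for the excluded residues $t\equiv_{10}1$, $t\equiv_{18}15$ precisely because a cyclotomic factor sneaks in. The paper's resolution occupies the bulk of Sections~\ref{d1_family} and \ref{d2_family}: a square-free reduction via $\Phi_b(x)=\Phi_{b/p}(x^p)$ with prime-by-prime case analysis (Lemmas~\ref{square_free}, \ref{square_free_2}, \ref{almost_square_free}), the Filaseta--Schinzel theorem on divisibility of lacunary polynomials (Theorem~\ref{filaseta}) to strip prime factors $\ge 7$ from $b$, a term-counting argument for $\Phi_p,\Phi_{2p}$ (Lemmas~\ref{p_2p}, \ref{p_2p_2}), and computer verification for $b\in\{3,5,6,10,15,30\}$ (Appendices~\ref{appendix_qt}--\ref{appendix_wt}). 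None of this is supplied or even gestured at in your sketch, and without some such mechanism the even-index case is simply unproved; note also that in the $n\equiv_4 2$ family the exceptional divisors $b\in\{4,8\}$ \emph{can} occur (Lemma~\ref{almost_square_free}) and are harmless only because $n\equiv_4 2$, a subtlety your ``divisors $m>2$ of $n$'' framing absorbs silently but your proposed estimates would have to confront.

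One further point you should state explicitly: the conjecture as written is \emph{false} for even $d/4$, not merely open in your hard residue class. By \cite[Lemma~18]{Damnjanovic} no $4t$-regular circulant nut graph of order $4t+4$ exists when $t$ is even (and by \cite[Proposition~19]{Damnjanovic} none of order $16$ exists for $t=2$, even though $16\ge 4t+8$), and both failures fall inside your even-$t$, $4\mid n$ class. So no ``new idea'' can close that class as stated; the correct target is the paper's amended concluding conjecture ($t\ge 4$ even, $n\ge 4t+8$ divisible by four). Your proposal is therefore best read as a correct road map for the paper's Theorems~\ref{main_theorem_1} and \ref{main_theorem_2} --- whose hardest content it omits --- rather than as an attempt on the literal statement, which admits no proof.
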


Later on, Damnjanovi\'c and Stevanovi\'c \cite[Proposition 8]{Damnjanovic} proved Conjecture~\ref{basic_conjecture_1} by showing that $\mathrm{Circ}(n, \{1, 2, 4, 5, 6, 7\})$ is a $12$-regular nut graph for each even $n \ge 16$. In fact, it was shown that for infinitely many odd values of $t$ it is possible to construct similar families of $4t$-regular circulant nut graphs, as demonstrated in the next theorem.

\begin{theorem}[Damnjanovi\'c and Stevanovi\'c \cite{Damnjanovic}]\label{damnjanovic_theorem_1}
For each odd $t \ge 3$ such that $t\not\equiv_{10} 1$ and $t\not\equiv_{18} 15$,
the circulant graph $\Circ(n, \{1, 2, 3, \ldots, 2t+1\} \setminus \{ t \})$ is a nut graph for each even $n \ge 4t+4$.
\end{theorem}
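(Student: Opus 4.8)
The plan is to work entirely with the spectrum of the circulant adjacency matrix and to recast the nut property as a statement about which cyclotomic polynomials divide a single integer polynomial attached to $S=\{1,2,\ldots,2t+1\}\setminus\{t\}$. First I would record the standard fact that $\Circ(n,S)$ has eigenvalues
\[
\lambda_j=\sum_{s\in S}\bigl(\omega^{js}+\omega^{-js}\bigr)=\sum_{s\in S}2\cos\frac{2\pi js}{n},\qquad j=0,1,\ldots,n-1,
\]
where $\omega=e^{2\pi i/n}$, with $\lambda_j$ belonging to the eigenvector $v_j=(1,\omega^{j},\ldots,\omega^{(n-1)j})^{\mathsf T}$. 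Since the matrix is real and symmetric, $\lambda_j=\lambda_{n-j}$, so any vanishing $\lambda_j$ with $j\notin\{0,\tfrac n2\}$ is paired with a second zero $\lambda_{n-j}$ and forces nullity at least two; moreover $\lambda_0=2|S|=4t>0$. Hence a circulant nut graph must have even order with its unique zero eigenvalue at $j=\tfrac n2$, where the eigenvector is $v_{n/2}=(1,-1,1,\ldots,-1)^{\mathsf T}$, which has no zero entry. Thus the nut property is equivalent to the two eigenvalue conditions $\lambda_{n/2}=0$ and $\lambda_j\neq0$ for all $j\neq\tfrac n2$, and I would reduce the whole theorem to verifying these.

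The condition $\lambda_{n/2}=0$ is immediate: $\lambda_{n/2}=2\sum_{s\in S}(-1)^{s}$, and for $S=\{1,\ldots,2t+1\}\setminus\{t\}$ the alternating sum $\sum_{s=1}^{2t+1}(-1)^{s}$ equals $-1$, while removing $s=t$ subtracts $(-1)^t=-1$ for odd $t$; the two cancel, which is exactly why oddness of $t$ is assumed. For the non-vanishing I would introduce the palindromic integer polynomial
\[
Q(x)=\sum_{s\in S}\bigl(x^{2t+1+s}+x^{2t+1-s}\bigr)=\frac{x^{4t+3}-1}{x-1}-x^{t+1}\bigl(x^{2t}+x^{t}+1\bigr),
\]
of degree $4t+2$, so that $Q(\omega^{j})=\omega^{(2t+1)j}\lambda_j$ and hence $\lambda_j=0\iff Q(\omega^{j})=0$. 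Because $Q\in\mathbb{Z}[x]$ and the minimal polynomial of $\omega^{j}$ is the cyclotomic polynomial $\Phi_{m}$ with $m=n/\gcd(n,j)$, one has $Q(\omega^{j})=0$ iff $\Phi_{m}\mid Q$. As $n$ ranges over all even integers $\ge 4t+4$, the orders $m=n/\gcd(n,j)$ realize every integer $m\ge2$, so the theorem is equivalent to the purely algebraic assertion that the only cyclotomic polynomial dividing $Q$ is $\Phi_2=x+1$.

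The core of the argument, and the step I expect to be the main obstacle, is the complete classification of the cyclotomic factors of $Q$. Writing $\zeta$ for a primitive $m$-th root of unity, $\Phi_m\mid Q$ is equivalent to the root-of-unity identity
\[
\frac{\zeta^{4t+3}-1}{\zeta-1}\,(\zeta^{t}-1)=\zeta^{t+1}\bigl(\zeta^{3t}-1\bigr),
\]
which I would attack on two fronts: taking moduli yields
\[
\left|\sin\tfrac{(4t+3)\theta}{2}\right|\left|\sin\tfrac{t\theta}{2}\right|=\left|\sin\tfrac{\theta}{2}\right|\left|\sin\tfrac{3t\theta}{2}\right|,\qquad \theta=\arg\zeta,
\]
which severely restricts the admissible angles, after which the argument (sign) information pins down the remaining candidates. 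One finds that, apart from $\Phi_2$, a cyclotomic factor can occur only in two exceptional ways: $\Phi_{10}\mid Q$ exactly when $t\equiv_{10}1$ (the offending factor being $\zeta^{5}+1$), and $\Phi_{9}\mid Q$ exactly when $t\equiv_9 6$, which for odd $t$ coincides with $t\equiv_{18}15$ (there both $\zeta^{4t+3}-1$ and $\zeta^{3t}-1$ vanish simultaneously, while $\zeta^{2t}+\zeta^{t}+1=0$). Ruling out every other $m$ is the delicate part: small $m$ can be dispatched by direct evaluation, but the general case really amounts to deciding when a prescribed $\pm1$-combination of roots of unity vanishes, so I anticipate invoking the structure theory of vanishing sums of roots of unity (in the spirit of the theorems of Mann and of Conway--Jones) or an equivalent explicit factorization of $Q$ to close the argument.

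Finally I would assemble the pieces: under the hypotheses $t$ odd, $t\not\equiv_{10}1$ and $t\not\equiv_{18}15$, the classification gives $\Phi_m\mid Q\iff m=2$, so for every even $n\ge 4t+4$ the matrix has exactly one zero eigenvalue, located at $j=\tfrac n2$, with the nowhere-zero eigenvector $v_{n/2}$. Hence $\Circ(n,\{1,2,\ldots,2t+1\}\setminus\{t\})$ has nullity one and a full null vector, i.e.\ it is a nut graph, as claimed.
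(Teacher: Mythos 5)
A preliminary remark: the paper you are writing into does not prove Theorem~\ref{damnjanovic_theorem_1} at all --- it quotes it from \cite{Damnjanovic} --- so your attempt can only be judged on its own merits and against the machinery this paper builds for its analogous results (Theorems~\ref{main_theorem_1} and~\ref{main_theorem_2}). Your setup is correct and matches that machinery: the circulant eigenvalue formula, the pairing $\lambda_j=\lambda_{n-j}$ forcing the unique zero eigenvalue to sit at $j=\frac n2$ with the alternating (nowhere-zero) eigenvector, the verification that $\lambda_{n/2}=0$ precisely because $t$ is odd (this is Lemma~\ref{damnjanovic_lemma_1} in disguise), the passage to $Q(x)=\frac{x^{4t+3}-1}{x-1}-x^{t+1}(x^{2t}+x^t+1)$ with $Q(\omega^j)=\omega^{(2t+1)j}\lambda_j$, and the equivalence of the theorem with the assertion that $\Phi_m\nmid Q$ for every $m\ge 3$. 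Your two exceptional cases are also genuine: for $t\equiv_{10}1$ one computes, at a primitive tenth root of unity $\zeta$, that $Q(\zeta)=1+\zeta+\zeta^5+\zeta^6=(1+\zeta)(1+\zeta^5)=0$, and for $t\equiv_{9}6$ (equivalently $t\equiv_{18}15$ for odd $t$) both $1+\zeta+\cdots+\zeta^{4t+2}$ and $\zeta^{2t}+\zeta^t+1$ vanish at a primitive ninth root of unity, so $\Phi_9\mid Q$.

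The gap is that the arithmetic core of the theorem --- showing $\Phi_m\nmid Q$ for \emph{every other} $m\ge3$ under the stated congruence hypotheses --- is asserted (``One finds that\dots'') but never argued, and nothing you write would close it. The modulus identity $\left|\sin\frac{(4t+3)\theta}{2}\right|\left|\sin\frac{t\theta}{2}\right|=\left|\sin\frac{\theta}{2}\right|\left|\sin\frac{3t\theta}{2}\right|$ is a single real equation with parameter $t$; by itself it restricts nothing effectively, and converting it plus sign data into a finite list of candidate orders $m$ \emph{is} the hard problem, not a step toward it. The fallback you name --- Mann or Conway--Jones on vanishing sums of roots of unity --- does not apply off the shelf: $Q(\zeta)=0$ says that $\sum_{k=0}^{4t+2}\zeta^k-\zeta^{t+1}-\zeta^{2t+1}-\zeta^{3t+1}$ vanishes, a sum whose number of terms grows linearly in $t$, whereas those theorems give useful prime bounds only for vanishing sums with few terms; one must first perform a gcd-dependent regrouping into complete geometric blocks, and it is exactly inside that case analysis that the conditions $t\not\equiv_{10}1$ and $t\not\equiv_{18}15$ have to emerge. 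Note also that the shortcut this paper uses in the analogous situation --- the Filaseta--Schinzel theorem (Theorem~\ref{filaseta}) --- is available only because the generator sets $\mathcal{S}'_{t,n}$ and $\mathcal{S}''_{t,n}$ were engineered so that the associated polynomials $Q_t,R_t,U_t,W_t$ have exactly six nonzero terms; your $Q$ has $4t$ nonzero terms, so that route is closed as well. In short: correct reduction, correctly identified exceptional congruences, but the actual mathematical content of the theorem is missing.
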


The primary motivation behind this paper is to improve the currently existing results regarding the circulant nut graph order--degree existence problem. In other words, the goal is to answer the question whether there exists a $d$-regular circulant nut graph of order $n$, for any such pair $(n, d) \in \mathbb{N}^2$ of integers. As shown by Damnjanovi\'c and Stevanovi\'c \cite[Lemma 6]{Damnjanovic}, the order of each circulant nut graph must be even, while its degree must be divisible by four. It is clear that a $4t$-regular circulant nut graph cannot have an order of $4t+2$, since the adjacency matrix of such a graph would obviously have a nullity greater than one, due to the fact that its first $\frac{n}{2}$ rows would match its following $\frac{n}{2}$ rows, respectively.

Furthermore, if $t$ is even, then the order of a $4t$-regular circulant nut graph cannot be below $4t+6$, as demonstrated by Damnjanovi\'c and Stevanovi\'c \cite[Lemma~18]{Damnjanovic}. Bearing this in mind, we deduct that for a given $t \in \mathbb{N}$, all the $4t$-regular circulant nut graphs must have an even order $n$ such that
\begin{itemize}
    \item $n \ge 4t + 4$ if $2 \nmid t$;
    \item $n \ge 4t + 6$ if $2 \mid t$.
\end{itemize}

In this paper, we present the following two families of circulant graphs:
\begin{alignat*}{2}
    \mathcal{D}'_{t, n} &= \Circ(n, \mathcal{S}'_{t, n}), \quad && \mbox{for each $2 \nmid t$ and $4 \mid n$ such that $n \ge 4t + 4$},\\
    \mathcal{D}''_{t, n} &= \Circ(n, \mathcal{S}''_{t, n}), \quad && \mbox{for each $t\in\mathbb{N}$ and $n \equiv_4 2$ such that $n \ge 4t + 6$},
\end{alignat*}
where
\begin{align*}
    \mathcal{S}'_{t, n} &= \{1, 2, \ldots, t-1\} \cup \left\{\frac{n}{4}, \frac{n}{4} + 1 \right\} \cup \left\{\frac{n}{2} - (t-1), \ldots, \frac{n}{2} - 2, \frac{n}{2} - 1 \right\} ,\\
    \mathcal{S}''_{t, n} &= \{1, 2, \ldots, t-1\} \cup \left\{\frac{n+2}{4}, \frac{n+6}{4} \right\} \cup \left\{\frac{n}{2} - (t-1), \ldots, \frac{n}{2} - 2, \frac{n}{2}-1 \right\} .
\end{align*}
Subsequently, we prove that all of the graphs which belong to these families are nut graphs, thereby showing that for each odd $t$, there does exist a $4t$-regular circulant nut graph of order $n$, for any even $n \ge 4t + 4$. This observation fully resolves the circulant nut graph order--degree existence problem for $4t$-regular graphs whenever $t$ is odd.

When it comes to the even values of $t$, we get a partial resolution of the existence problem, given the fact that the said constructions do cover the case when $n \equiv_4 2$. However, the case when $n$ is divisible by four is not yet resolved, hence these graphs remain to be further inspected in the future.

The remainder of this paper is structured as follows. In Section~\ref{preliminaries} we preview certain theoretical facts regarding the circulant matrices, circulant nut graphs and cyclotomic polynomials which are required in order to successfully prove the two main theorems. Subsequently, Section~\ref{d1_family} displays a full mathematical proof of the fact that each $\mathcal{D}'_{t, n}$ circulant graph is indeed a nut graph. Afterwards, Section~\ref{d2_family} uses a similar strategy in order to prove that every $\mathcal{D}''_{t, n}$ graph must be a circulant nut graph as well. Finally, Section~\ref{conclusion} provides a brief conclusion to all the obtained results and discloses a new conjecture to be solved later on.

\section{Preliminaries}\label{preliminaries}

First of all, it is worth pointing out that the generator sets $\mathcal{S}'_{t, n}$ and $\mathcal{S}''_{t, n}$ are always well defined. It is straightforward to check that
\[
    1 < 2 < \cdots < t-1 < \frac{n}{4} < \frac{n}{4} + 1 < \frac{n}{2} - (t-1) < \cdots < \frac{n}{2} - 2 < \frac{n}{2} - 1 
\]
for each odd $t \ge 3$ and $n \ge 4t + 4$ that is divisible by four. Similarly, we have
\[
    1 < 2 < \cdots < t-1 < \frac{n+2}{4} < \frac{n+6}{4} < \frac{n}{2} - (t-1) < \cdots < \frac{n}{2} - 2 < \frac{n}{2} - 1
\]
for every $t \ge 2$ and $n \ge 4t + 6$ such that $n \equiv_4 2$. Finally, for $t = 1$ we have that
\[
    1 < \dfrac{n}{4} < \dfrac{n}{4} + 1 \le \dfrac{n}{2} - 1
\]
for every $n \ge 4t+4$ divisible by four, as well as
\[
    1 < \dfrac{n+2}{4} < \dfrac{n+6}{4} \le \dfrac{n}{2} - 1
\]
 for each $n \ge 4t + 6$ such that $n \equiv_4 2$.

It is known from elementary linear algebra theory (see, for example, \cite[Section~3.1]{Gray}) that the circulant matrix
\[
A = \begin{bmatrix}
    a_0 & a_1 & a_2 & \cdots & a_{n-1}\\
    a_{n-1} & a_0 & a_1 & \cdots & a_{n-2}\\
    a_{n-2} & a_{n-1} & a_0 & \cdots & a_{n-3}\\
    \vdots & \vdots & \vdots & \ddots & \vdots\\
    a_1 & a_2 & a_3 & \dots & a_0
\end{bmatrix}
\]
must have the eigenvalues
\[
    P(1), P(\omega), P(\omega^2), \ldots, P(\omega^{n-1}),
\]
where $\omega=e^{i \frac{2 \pi}{n}}$ is an $n$-th root of unity, and
\[
    P(x) = a_0 + a_1 x + a_2 x^2 + \cdots + a_{n-1} x^{n-1} .
\]

Starting from the aforementioned result, Damnjanovi\'c and Stevanovi\'c \cite{Damnjanovic} have managed to give the necessary and sufficient conditions for a circulant graph to be a nut graph in the form of the following lemma.
\newpage
\begin{lemma}[Damnjanovi\'c and Stevanovi\'c \cite{Damnjanovic}]\label{damnjanovic_lemma_1}
    Let $G = \Circ(n, S)$ where $n \ge 2$. The graph $G$ is a nut graph if and only if all of the following conditions hold:
    \begin{itemize}
        \item $2 \mid n$;
        \item $S$ consists of $t$ odd and $t$ even integers from $\left\{1, 2, 3, \ldots, \frac{n}{2} - 1 \right\}$, for some $t \ge 1$;
        \item $P(\omega^j)\neq 0$ for each $j\in\left\{1, 2, 3, \ldots, \frac{n}{2}-1 \right\}$.
    \end{itemize}
\end{lemma}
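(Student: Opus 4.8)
The plan is to exploit the explicit spectral decomposition of a circulant matrix recalled above. Writing $A$ for the adjacency matrix of $G = \Circ(n, S)$, its eigenvalues are $P(\omega^0), P(\omega^1), \ldots, P(\omega^{n-1})$ with $\omega = e^{i\frac{2\pi}{n}}$, and the eigenvector attached to $P(\omega^j)$ is $v_j = (1, \omega^j, \omega^{2j}, \ldots, \omega^{(n-1)j})^{\top}$. Thus the nullity of $A$ equals the number of indices $j \in \{0, 1, \ldots, n-1\}$ for which $P(\omega^j) = 0$, and the null space is spanned by the corresponding $v_j$. The first thing I would record is the symmetry forced by $a_j = a_{n-j}$: a direct reindexing gives $P(\omega^j) = P(\omega^{n-j})$, and since $A$ is real and symmetric every $P(\omega^j)$ is real. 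Consequently the vanishing indices split into conjugate pairs $\{j, n-j\}$, the only exceptions being the self-paired indices $j = 0$ and, when $n$ is even, $j = n/2$.

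From this pairing the first two conditions fall out. Since $P(1) = \sum_{k} a_k = \deg(G) > 0$ for any graph with at least one edge (and nullity one already forces $G$ to be non-empty), the index $j = 0$ never contributes a zero; hence if the nullity is odd --- in particular if it equals one --- there must be a self-paired vanishing index other than $0$, forcing $n$ to be even and the unique zero to sit at $j = n/2$, i.e.\ $P(-1) = 0$ with $P(\omega^j) \neq 0$ for all $j \in \{1, \ldots, n/2 - 1\}$ (the remaining indices $j > n/2$ being handled by the symmetry). It then remains to translate $P(-1) = 0$ into the stated counting statement. Grouping the generators, each $s \in S$ with $s < n/2$ contributes $(-1)^s + (-1)^{n-s} = 2(-1)^s$ to $P(-1)$ because $n$ is even, whereas a generator $s = n/2$ would contribute the single term $(-1)^{n/2}$. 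Hence $P(-1) = 2(E - O) + [\tfrac{n}{2} \in S](-1)^{n/2}$, where $E$ and $O$ count the even and odd generators below $n/2$; as the leading term is even while the parity correction is $\pm 1$, the equation $P(-1) = 0$ holds exactly when $\tfrac{n}{2} \notin S$ and $E = O$, which is precisely the requirement that $S$ consist of $t$ odd and $t$ even elements of $\{1, \ldots, n/2 - 1\}$ for some $t \ge 1$.

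The final, and pleasantly automatic, point concerns the no-zero-entry requirement. When the nullity is one the null space is one-dimensional and spanned by $v_{n/2} = (1, \omega^{n/2}, \omega^{2 \cdot n/2}, \ldots)^{\top} = (1, -1, 1, -1, \ldots, 1, -1)^{\top}$, every entry of which is $\pm 1$; consequently any nonzero null space vector is a scalar multiple of this alternating vector and so has no zero coordinate. This shows the third defining property of a nut graph is forced by the others, and it is exactly this feature of the circulant setting that makes the criterion clean. Assembling the pieces yields both implications: if $G$ is a nut graph then nullity one produces the three listed conditions, and conversely the three conditions produce exactly one vanishing eigenvalue, at $j = n/2$, together with the explicit non-vanishing null vector, so $G$ is a nut graph.

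The step I expect to require the most care is the bookkeeping in the evaluation of $P(-1)$ --- tracking the doubling for generators below $n/2$, the lone contribution of a possible generator equal to $n/2$, and the parity bound that rules it out --- since it is here that the clean ``$t$ odd and $t$ even'' description is extracted. The pairing argument and the identification of the null vector are comparatively routine once the symmetry $P(\omega^j) = P(\omega^{n-j})$ is in hand.
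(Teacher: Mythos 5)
Your proof is correct. Since this lemma is imported by the paper from Damnjanovi\'c and Stevanovi\'c \cite{Damnjanovic} without an accompanying proof, there is no in-paper argument to compare against; your route --- diagonalizing the circulant by the vectors $v_j$, pairing the vanishing indices via $P(\omega^j) = P(\omega^{n-j})$ so that nullity one together with $P(1) = \deg(G) > 0$ forces the unique zero eigenvalue to sit at $j = \frac{n}{2}$ (hence $2 \mid n$), extracting the ``$t$ odd, $t$ even, $\frac{n}{2} \notin S$'' condition from the parity of $P(-1) = 2(E-O) \pm [\frac{n}{2} \in S]$, and observing that the alternating kernel vector $(1,-1,\ldots,1,-1)^{\top}$ is automatically nowhere zero --- is exactly the standard argument underlying the cited result, built on the same spectral decomposition the paper recalls in its preliminaries.
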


All of the graphs $\mathcal{D}'_{t, n}$ and $\mathcal{D}''_{t, n}$ clearly satisfy the first condition from Lemma~\ref{damnjanovic_lemma_1}. Besides that, it is easy to show that they necessarily satisfy the second condition as well. For $\mathcal{D}'_{t, n}$, it is enough to point out that both $\{1, 2, \ldots, t-1\}$ and $\left\{\frac{n}{2} - (t-1), \ldots, \frac{n}{2} - 2, \frac{n}{2} - 1 \right\}$ contain an even number of consecutive integers, due to the fact that $t-1$ is even. Since $\frac{n}{4}$ and $\frac{n}{4} + 1$ are surely of different parities, it follows that $\mathcal{S}'_{t, n}$ does contain equally many odd and even integers. For $\mathcal{D}''_{t, n}$, it is essential to notice that $j$ and $\frac{n}{2} - j$ must be of different parities, for each $j = \overline{1, t-1}$, due to the fact that $\frac{n}{2}$ is odd. Furthermore, $\frac{n+2}{4}$ and $\frac{n+6}{4}$ are certainly of different parities, hence we obtain that $\mathcal{S}''_{t, n}$ contains equally many odd and even integers, too.

Taking into consideration Lemma~\ref{damnjanovic_lemma_1}, it becomes apparent that in order to show that each graph $\mathcal{D}'_{t, n}$ and $\mathcal{D}''_{t, n}$ is a nut graph, it is sufficient to prove that it satisfies the third condition given in the lemma. In other words, it is enough to demonstrate that, for these graphs, the polynomial $P(x) \in \mathbb{Z}[x]$ has no $n$-th roots of unity among its roots, except potentially $-1$ or $1$.

It is clear that $\omega^{n-j} = \dfrac{1}{\omega^j}$ for each $j = \overline{1, n-1}$. Bearing this in mind, we quickly obtain that
\begin{equation}\label{polynomial_formula}
    P(\omega^j) = \left( \omega^{j s_0} + \frac{1}{\omega^{j s_0}} \right) + \left( \omega^{j s_1} + \frac{1}{\omega^{j s_1}} \right) + \cdots + \left( \omega^{j s_{k-1}} + \frac{1}{\omega^{j s_{k-1}}} \right)
\end{equation}
for an arbitrary circulant graph $G = \mathrm{Circ}(n, S)$, where $S = \{ s_0, s_1, s_2, \ldots, s_{k-1} \}$, provided all the generator set elements are lower than $\frac{n}{2}$. Formula (\ref{polynomial_formula}) will be heavily used throughout Sections \ref{d1_family} and \ref{d2_family} while proving the two main theorems of the paper.

Last but not least, it is crucial to point out that the cyclotomic polynomials shall play a key role in demonstrating whether or not certain polynomials of interest contain the given roots of unity among their roots. The cyclotomic polynomial $\Phi_b(x)$ can be defined for each $b \in \mathbb{N}$ via
\[
    \Phi_b(x) = \prod_{\zeta} (x - \zeta) ,
\]
where $\zeta$ ranges over the primitive $b$-th roots of unity. It is known that these polynomials have integer coefficients and that they are all irreducible in $\mathbb{Q}[x]$ (see, for example, \cite{cyclotomic}). Hence, an arbitrary polynomial in $\mathbb{Q}[x]$ has a primitive $b$-th root of unity among its roots if and only if it is divisible by $\Phi_b(x)$.

While inspecting whether certain integer polynomials are divisible by cyclotomic polynomials, we will strongly rely on the following theorem on the divisibility of lacunary polynomials by cyclotomic polynomials.
\begin{theorem}[Filaseta and Schinzel \cite{Filaseta}]\label{filaseta}
Let $P(x) \in \mathbb{Z}[x]$ have $N$ nonzero terms and let $\Phi_b(x) \mid P(x)$.
Suppose that $p_1, p_2, \dots, p_k$ are distinct primes such that
\[
    \sum_{j=1}^k (p_j-2) > N-2 .
\]
Let $e_j$ be the largest exponent such that $p_j^{e_j} \mid b$. Then for at least one $j$, $1 \le j \le k$, we have that $\Phi_{b'}(x)\mid P(x)$, where $b' = \dfrac{b}{p_j^{e_j}}$.
\end{theorem}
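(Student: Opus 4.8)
The plan is to prove the contrapositive-flavoured quantitative statement: assuming $\Phi_{b'}\nmid P$ for every candidate $b'=b/p_j^{e_j}$, I will show that $P$ must already carry at least $2+\sum_{j=1}^k(p_j-2)$ nonzero terms, contradicting $\sum_j(p_j-2)>N-2$. First I would dispose of the trivial indices: if $p_j\nmid b$ then $e_j=0$ and $b'=b$, so $\Phi_{b'}=\Phi_b\mid P$ already; hence I may assume every $p_j\mid b$ and write $b=p_1^{e_1}\cdots p_k^{e_k}m_0$ with $\gcd(m_0,p_j)=1$. Since $\Phi_c$ is the minimal polynomial of a primitive $c$-th root of unity, $\Phi_{b'}\mid P$ is equivalent to $P(\zeta_{b'})=0$, so the goal is to show $P(\zeta_{b/p_j^{e_j}})=0$ for at least one $j$.

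The central device is a multivariate lift. Fixing primitive roots $\eta_j=\zeta_{p_j^{e_j}}$ and $\mu=\zeta_{m_0}$ with $\zeta_b=\eta_1\cdots\eta_k\mu$, I encode each exponent by its residues via the CRT: replace $P(x)=\sum_i c_i x^{a_i}$ by $\hat P(y_1,\dots,y_k)=\sum_i c_i\,\mu^{a_i}\prod_j y_j^{\,a_i\bmod p_j^{e_j}}$, a polynomial with coefficients in $\mathbb{Q}(\mu)$ and $\deg_{y_j}\hat P<p_j^{e_j}$. Grouping equal monomials can only merge terms, so $T(\hat P)\le N$, where $T$ counts nonzero terms. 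Two facts drive everything: $\hat P(\eta_1,\dots,\eta_k)=P(\zeta_b)=0$, and setting $y_j=1$ collapses the $p_j$-part, so $\hat P$ with $y_j=1$ evaluated at the remaining roots equals $P(\zeta_{b/p_j^{e_j}})$. Thus the assumption becomes: $\hat P$ vanishes at $(\eta_1,\dots,\eta_k)$ but is nonzero at each of the $k$ points obtained by setting one $y_j=1$.

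Next I would extract the rigidity forced by each prime. Over the field generated by the remaining roots, $\eta_j$ has minimal polynomial $\Phi_{p_j^{e_j}}(y)=1+y^{p_j^{e_j-1}}+\cdots+y^{(p_j-1)p_j^{e_j-1}}$, so viewing $\hat P$ as a polynomial in $y_j$ and specializing the other variables shows $\Phi_{p_j^{e_j}}(y_j)$ divides that specialization; because the nonzero support of $\Phi_{p_j^{e_j}}(y_j)S(y_j)$ splits into $p_j$ disjoint blocks, the relevant term count is a multiple of $p_j$, while $\hat P|_{y_j=1}\ne 0$ forbids total collapse. This already yields the one-prime bound $N\ge p_j$. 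For $k=2$ (say with prime $e_j$) the analysis upgrades to the normal form $\hat P=h_0(y_2)\Phi_{p_1}(y_1)+\Lambda(y_1)\Phi_{p_2}(y_2)$, under which the two non-vanishing conditions become ``the coefficients $\beta_c$ of $h_0$ are not all equal'' and ``$\Lambda\ne 0$''. Reading off the coefficient array $M[a][c]=\beta_c+\lambda_a$ (with $\lambda_0=0$) and bounding the number of its zero entries by $(p_1-1)(p_2-1)+1$ (using that neither the $\beta_c$ nor the $\lambda_a$ are constant) gives $T(\hat P)\ge p_1+p_2-2=2+(p_1-2)+(p_2-2)$, which is exactly the contradiction.

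The general case I would obtain by induction on $k$, peeling off one prime at a time: each peeling produces a $\Phi_{p_j^{e_j}}$-factor and a complementary summand, and the $k$-dimensional analogue of the array count combines the per-prime deficits additively into $2+\sum_j(p_j-2)$. I expect the main obstacle to be precisely this multi-prime counting: controlling the cancellations where the overlapping cyclotomic blocks of different primes interact, and proving that they can never push the number of surviving terms below $2+\sum_j(p_j-2)$. This is the heart of the matter and is where the hypothesis $\sum_j(p_j-2)>N-2$ is consumed; it is closely related to the Lam--Leung structure theory of vanishing sums of roots of unity. The prime-power case $e_j>1$ is a secondary technical layer, handled uniformly by the block shape of $\Phi_{p_j^{e_j}}$ recorded above.
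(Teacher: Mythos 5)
First, a structural remark: the paper does not prove this statement at all. Theorem~\ref{filaseta} is imported verbatim from Filaseta and Schinzel \cite{Filaseta} and used as a black box, so there is no internal proof to compare yours against; your attempt has to be judged on its own as a proof of the cited result.

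Judged that way, it has a genuine gap, and it is exactly the one you flag yourself. Your setup is sound: the CRT lift $\hat{P}$, the equivalence of $\Phi_{b'}\mid P$ with vanishing at a single primitive $b'$-th root, the translation of the non-divisibility hypotheses into $\hat{P}|_{y_j=1}\neq 0$, and the two-prime squarefree case (the normal form $h_0(y_2)\Phi_{p_1}(y_1)+\Lambda(y_1)\Phi_{p_2}(y_2)$, whose existence follows from irreducibility of $\Phi_{p_1}$ over $\mathbb{Q}(\mu,\eta_2)$ together with the degree bounds, and the zero-entry count $(p_1-1)(p_2-1)+1$) can all be made rigorous. But the general-$k$ step is not a routine induction that can be deferred --- it \emph{is} the theorem. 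Concretely, reducing modulo the $\Phi_{p_j^{e_j}}(y_j)$ one at a time yields a normal form $\hat{P}=\sum_j H_j\,\Phi_{p_j^{e_j}}(y_j)$ in which each $H_j$ depends on \emph{all} variables other than $y_j$, so for $k\ge 3$ the coefficient array is a sum of $k$ cylindrical tensors $M[a_1,\dots,a_k]=\sum_j H_j[a_1,\dots,\widehat{a_j},\dots,a_k]$, no longer of the separated shape $\beta_c+\lambda_a$ that made your two-prime maximization work. Bounding the number of nonzero entries of such a sum below by $2+\sum_j(p_j-2)$ under the $k$ non-vanishing conditions is the hard combinatorial core; cross-prime cancellation is precisely what could, a priori, break additivity, and nothing in your sketch excludes it. Moreover, ``peeling off one prime at a time'' fails naively because the pieces produced by peeling $p_1$ need not inherit the non-divisibility hypotheses for $p_2,\dots,p_k$, and their term counts are not controlled by $N$. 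In the literature this step is supplied by structure theory of vanishing sums of roots of unity (Mann, Conway--Jones, Lam--Leung) or by Schinzel's lemmas on lacunary polynomials. Since you explicitly leave it as an ``expected obstacle,'' what you have is a correct reduction plus a plan, not a proof.
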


\section{\texorpdfstring{$\mathcal{D}'_{t, n}$}{D'(t, n)} family of circulant graphs}\label{d1_family}

In this section, we will formulate and provide a full mathematical proof of the first of the two central theorems given in the paper.
\begin{theorem}\label{main_theorem_1}
    For each odd $t \in \mathbb{N}$ and $n \ge 4t + 4$ such that $4 \mid n$, the circulant graph $\mathcal{D}'_{t, n}$ must be a $4t$-regular nut graph of order $n$.
\end{theorem}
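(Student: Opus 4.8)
The plan is to verify the three conditions of Lemma~\ref{damnjanovic_lemma_1}. Since $4 \mid n$ gives $2 \mid n$, and the preliminaries already establish that $\mathcal{S}'_{t,n}$ contains equally many odd and even elements (this is where oddness of $t$ enters), while $|\mathcal{S}'_{t,n}| = 2t$ takes care of regularity and order, the only substantial task is the third condition: $P(\omega^j) \neq 0$ for every $j \in \{1, 2, \ldots, \frac n2 - 1\}$. I would start from formula~(\ref{polynomial_formula}), writing
\[
P(\omega^j) = \sum_{s \in \mathcal{S}'_{t,n}} 2\cos\frac{2\pi j s}{n},
\]
and exploit the fact that the outer blocks $\{1,\ldots,t-1\}$ and $\{\frac n2 - (t-1), \ldots, \frac n2 - 1\}$ are reflections of one another: since $\cos\frac{2\pi j(\frac n2 - s)}{n} = (-1)^j \cos\frac{2\pi j s}{n}$, they jointly contribute $(1 + (-1)^j)\sum_{s=1}^{t-1} 2\cos\frac{2\pi j s}{n}$. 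This makes splitting on the parity of $j$ the natural first move.

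For $j$ odd the two outer blocks cancel completely, and a short computation shows that the middle block $\{\frac n4, \frac n4 + 1\}$ contributes exactly $\pm 2\sin\frac{2\pi j}{n}$. Because $0 < \frac{2\pi j}{n} < \pi$ for $1 \le j \le \frac n2 - 1$, this is nonzero, so the odd case is settled with no further machinery.

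The even case $j = 2m$ is where the real work lies. Here $\omega^{2m} = \eta$ is an $N$-th root of unity with $N = \frac n2$, the reflected block now coincides with the first block, and the middle block collapses to a perfect square. Concretely I would show $P(\omega^{2m}) = F(\eta)$, where
\[
F(x) = 2\sum_{s=1}^{t-1}\left(x^s + x^{N-s}\right) + x^{\frac N2 - 1}(1+x)^2 \in \mathbb{Z}[x],
\]
so that, using conjugate symmetry, the whole even case reduces to the single statement $\Phi_b(x) \nmid F(x)$ for every divisor $b \geq 3$ of $N$. I would then attack this with Theorem~\ref{filaseta}: the polynomial $F$ has only $2t+1$ nonzero terms (here the hypothesis $n \ge 4t+4$ is what keeps the middle terms from colliding with the outer ones), so whenever the distinct prime divisors of $b$ satisfy $\sum_p(p-2) > 2t-1$ one may delete a prime from $b$ while preserving divisibility. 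A single prime divisor $p \ge 2t+2$ can therefore be deleted outright, collapsing $b$ to $1$ and forcing $\Phi_1(x)\mid F$, i.e.\ $F(1)=0$; but $F(1)=4t$, a contradiction. Iterating the deletion leaves only those $b$ whose distinct primes already satisfy $\sum_p(p-2) \le 2t-1$, so that every prime factor is at most $2t+1$ and only boundedly many of them occur.

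The main obstacle is precisely the direct disposal of this residual family of small moduli. For these I would evaluate $F$ at a primitive $b$-th root of unity by reducing $F$ modulo $x^b-1$ and comparing the coefficient masses of the residue classes modulo $b$; already for a prime $b=p$ this forces the common class-sum $\frac{4t}{p}$ to be an integer, i.e.\ $p \mid 4t$, and the remaining finitely many congruence patterns, governed by $t \bmod b$ and by the sign $\eta^{N/2} = \pm 1$, must each be ruled out. Carrying this case analysis through uniformly in $t$ is the delicate part of the argument; the degenerate case $t=1$, where $F = x^{\frac N2 - 1}(1+x)^2$ visibly has no primitive $b$-th root of unity among its zeros for $b \ge 3$, is best handled separately at the outset.
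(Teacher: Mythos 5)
Your reductions up to the even case are sound, and they in fact parallel the paper's own first steps: your odd-$j$ case is precisely the paper's case $\zeta^{n/2}=-1$ (outer blocks cancel, middle block gives $\mp 2\sin\frac{2\pi j}{n}\neq 0$), and your identity $P(\omega^{2m}) = F(\eta)$ with $F(x) = 2\sum_{s=1}^{t-1}\left(x^s + x^{N-s}\right) + x^{N/2-1}(1+x)^2$ is correct, as is the equivalence of the even case with $\Phi_b(x) \nmid F(x)$ for all $b \mid N$, $b \ge 3$. The gap is everything after that, and it is not a deferrable verification: it is the entire content of the theorem. Working with $F$, which has $2t+1$ terms and depends on $N$, makes Theorem~\ref{filaseta} too weak for your plan. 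Concretely: (i) your deletion endgame fails, because chains need not terminate at $b'=1$. For $b = 2p$ with $p > 2t+1$ prime, Filaseta--Schinzel only yields $\Phi_2(x) \mid F(x)$, which is \emph{true} --- for odd $t$ one has $F(-1) = 4\sum_{s=1}^{t-1}(-1)^s = 0$ --- so no contradiction arises and such $b$ are never excluded; one must prove $\Phi_{2p}(x)\nmid F(x)$ directly, and your equal-class-sum test is not the right criterion for even moduli. (ii) Filaseta--Schinzel removes whole prime powers and never lowers exponents, so non-square-free residual moduli such as $b = 2^a 3^c$ remain untouched, and for them equal class sums are necessary but not sufficient; you would need the identity $\Phi_b(x) = \Phi_{b/p}(x^p)$ for $p^2 \mid b$ (the engine of the paper's Lemma~\ref{square_free}), which you never invoke. (iii) Because your term count is $2t+1$, the residual family $\left\{ b : \sum_{p \mid b}(p-2) \le 2t-1 \right\}$ grows with $t$ (it contains every square-free product of primes up to roughly $2t$), so ``finitely many congruence patterns'' is not true uniformly in $t$: there is no fixed finite list of cases to check, and this is exactly the ``delicate part'' you leave open.

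The paper escapes all three problems with one further algebraic step that your argument is missing. In the even case, $\zeta^{n/2}=1$ forces $\zeta^{n/4} = \pm 1$; multiplying $P(\zeta)=0$ by $\zeta^{t-1}(\zeta-1)$ and substituting $\zeta^{n/4} = \pm 1$ collapses all $n$-dependent exponents, turning the condition into $Q_t(\zeta) = 0$ or $R_t(\zeta) = 0$ for the fixed six-term polynomials $Q_t, R_t$ that do not depend on $n$. Six terms means Filaseta--Schinzel needs only a single prime $\ge 7$; the moduli $p$ and $2p$ with $p \ge 7$ are then killed by a term-count/unique-remainder argument (Lemmas~\ref{unique_remainder} and~\ref{p_2p}), square factors by Lemma~\ref{square_free}, and the residue of the problem shrinks to the fixed finite list $b \in \{3,5,6,10,15,30\}$, checked by computer over the residues of $t$ (Lemma~\ref{small_ones}). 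Without this reduction to an $n$-independent lacunary polynomial with a bounded number of terms --- or some equivalent device making the verification uniform in both $t$ and $n$ --- your route cannot be completed.
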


In order to make the proof more concise, we will need a number of auxiliary lemmas. To start with, let $Q_t(x) \in \mathbb{Z}[x]$ and $R_t(x) \in \mathbb{Z}[x]$ be the following two integer polynomials
\begin{align*}
    Q_t(x) &= 2x^{2t-1} + x^{t+1} - x^t + x^{t-1} - x^{t-2} - 2,\\
    R_t(x) &= 2x^{2t-1} - x^{t+1} - 3x^t + 3x^{t-1} + x^{t-2} - 2,
\end{align*}
for each odd $t \ge 3$. It is clear that all of these polynomials must have exactly six non-zero terms, due to the fact that
\[
    0 < t-2 < t-1 < t < t+1 < 2t-1
\]
for each odd $t \ge 3$. Now, if we let
\[
    L_t = \{ 0, t-2, t-1, t, t+1, 2t-1 \}
\]
be the set consisting of the powers of these six terms, then it can be shown that this set has one valuable property, as described in the following lemma.

\begin{lemma}\label{unique_remainder}
    For each odd $t \ge 3$ and each prime number $p \ge 5$, the set $L_t$ contains an element whose remainder modulo $p$ is unique within the set.
\end{lemma}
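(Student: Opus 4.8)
The plan is to argue by a short counting/pigeonhole argument built on the observation that four of the six elements of $L_t$ form a block of consecutive integers. Specifically, the elements $t-2, t-1, t, t+1$ are four consecutive integers, so any two of them differ by at most $3$. Since $p \ge 5 > 3$, no two of these four can be congruent modulo $p$, and hence they occupy four pairwise-distinct residue classes mod $p$. This structural fact is really the \emph{crux} of the whole argument, and I would establish it first.

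Next I would bring in the two remaining elements of $L_t$, namely $0$ and $2t-1$. Suppose, for contradiction, that no element of $L_t$ has a unique remainder modulo $p$; equivalently, every residue class hit by $L_t$ is hit at least twice. Then in particular each of the four consecutive integers $t-2, t-1, t, t+1$ must be congruent mod $p$ to some other element of $L_t$. But these four are mutually incongruent, so the only candidates for such a partner are $0$ and $2t-1$.

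The key counting step is then to note that $0$ can be congruent mod $p$ to at most one of the four: if it matched two of them, those two would be congruent to each other, contradicting their distinctness. Likewise $2t-1$ can match at most one of the four. Thus at most two of the four consecutive integers can have a partner, leaving at least $4 - 2 = 2$ of them whose residue is shared by no other element of $L_t$, which contradicts the assumption. This forces the existence of an element with a unique remainder and completes the proof.

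I do not anticipate a serious obstacle here, as the argument is elementary and uses only the hypothesis $p \ge 5$. The single point that needs care is the verification that $0$ and $2t-1$ can each be congruent to at most one of the four consecutive residues, which follows immediately from their pairwise distinctness; one should also check that the contradiction is reached regardless of whether $0 \equiv 2t-1 \pmod p$, since in that case both lie in one class and all four consecutive integers are automatically unique.
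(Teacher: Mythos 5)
Your proof is correct and follows essentially the same route as the paper: both rest on the observation that the four consecutive integers $t-2, t-1, t, t+1$ are pairwise incongruent modulo $p \ge 5$, so that $0$ and $2t-1$ can each collide with at most one of them, leaving at least two with unique residues. The paper states this counting step in a single sentence, whereas you spell it out as an explicit pigeonhole argument by contradiction; your closing worry about the case $0 \equiv 2t-1 \pmod{p}$ is unnecessary, since your main count already covers it.
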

\begin{proof}
    The four integers $t-2, t-1, t, t+1$ are consecutive, hence it is clear that they all have mutually distinct remainders modulo $p$. Regardless of what their remainders modulo $p$ are, at least two of these integers must have a remainder that is different from the remainders of both $0$ and $2t-1$.
\end{proof}

Lemma \ref{unique_remainder} proves to be useful while demonstrating one key property of the $Q_t(x)$ and $R_t(x)$ polynomials regarding their division by cyclotomic polynomials. This is shown in the next lemma.

\begin{lemma}\label{square_free}
    For any odd $t \ge 3$, neither $Q_t(x)$ nor $R_t(x)$ can be divisible by a cyclotomic polynomial $\Phi_b(x)$ such that $p^2 \mid b$ for some prime number $p$.
\end{lemma}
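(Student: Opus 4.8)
The plan is to argue by contradiction using a Galois-symmetry argument on the roots, combined with Lemma~\ref{unique_remainder} for the large primes and a direct modulus estimate for $p \in \{2,3\}$. Let $P(x)$ denote either $Q_t(x)$ or $R_t(x)$, write $P(x) = \sum_{\ell \in L_t} c_\ell x^\ell$, and record that every coefficient $c_\ell$ is nonzero (each lies in $\{\pm 1, \pm 2, \pm 3\}$) and that the constant term is $c_0 = -2$. Suppose, contrary to the claim, that $\Phi_b(x) \mid P(x)$ with $p^2 \mid b$ for some prime $p$, and fix a primitive $b$-th root of unity $\zeta$, so that $P(\zeta) = 0$.

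First I would establish the key symmetry. Setting $\eta = \zeta^{b/p}$, which is a primitive $p$-th root of unity, I claim that $\zeta \eta^j = \zeta^{1 + j(b/p)}$ is again a primitive $b$-th root of unity for every $j$. Indeed, because $p^2 \mid b$ the integer $b/p$ is divisible by $p$ and by every other prime dividing $b$, so $1 + j(b/p)$ is coprime to $b$; hence $\Phi_b(\zeta \eta^j) = 0$ and therefore $P(\zeta \eta^j) = 0$ for all $j = 0, 1, \ldots, p-1$. Grouping the terms of $P$ according to the residue of the exponent modulo $p$ and writing $d_r = \sum_{\ell \equiv r\,(p)} c_\ell \zeta^\ell$, these $p$ vanishing conditions read $\sum_{r=0}^{p-1} d_r \eta^{jr} = 0$ for each $j$. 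The coefficient matrix $(\eta^{jr})_{j,r}$ is a Vandermonde matrix in the distinct $p$-th roots of unity, hence invertible, which forces $d_r = 0$ for every residue $r$.

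It then remains to exhibit, for each prime $p$, a residue class modulo $p$ whose associated equation $d_r = 0$ is impossible. For $p \ge 5$, Lemma~\ref{unique_remainder} supplies a residue $r$ hit by a single exponent $\ell^\ast \in L_t$, so $d_r = c_{\ell^\ast}\zeta^{\ell^\ast} = 0$ contradicts $c_{\ell^\ast} \ne 0$. For $p = 2$ the class $r = 0$ collects the even exponents $\{0, t-1, t+1\}$, and after factoring out $\zeta^{t-1}$ the relation $d_0 = 0$ becomes $\zeta^{t-1}(1 + \zeta^2) = 2$ for $Q_t$ and $\zeta^{t-1}(3 - \zeta^2) = 2$ for $R_t$; taking absolute values with $|\zeta| = 1$ forces $\zeta^2 = 1$, which is impossible since $4 \mid b$. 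For $p = 3$, a short case analysis on $t \bmod 3$ shows that $L_t$ always has either a singleton residue class (giving $c_\ell \zeta^\ell = 0$) or a two-element class of the form $\{0, \ell\}$ (giving $\zeta^\ell = -c_0/c_\ell$ of modulus $\ne 1$), either of which is impossible.

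The main obstacle is precisely the small primes $p \in \{2,3\}$, where Lemma~\ref{unique_remainder} does not apply and the residue classes need not be singletons. The resolution exploits two special structural features of $Q_t$ and $R_t$: the constant term always has coefficient $-2$, and every power of $\zeta$ has modulus $1$; together these turn any surviving short relation into a numerical impossibility. Checking that such a favourable configuration genuinely occurs for all odd $t \ge 3$ in each residue case is the one place where explicit (but routine) bookkeeping on $L_t$ modulo $2$ and $3$ is needed.
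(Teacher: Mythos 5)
Your proof is correct, but it takes a route that differs from the paper's in two places, so a comparison is worthwhile. First, the decomposition step: the paper derives it from the identity $\Phi_b(x) = \Phi_{b/p}(x^p)$ (valid because $p \mid \tfrac{b}{p}$), concluding that $\Phi_b(x)$ divides each block $P^{(j)}(x)$ of terms whose exponents are congruent to $j$ modulo $p$; you instead evaluate $P$ at the conjugate roots $\zeta^{1+j(b/p)}$ --- your verification that these are again \emph{primitive} $b$-th roots of unity is exactly where the hypothesis $p^2 \mid b$ enters --- and invert a Vandermonde matrix to conclude $P^{(r)}(\zeta) = 0$ for every residue $r$. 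Since $\Phi_b$ is the minimal polynomial of $\zeta$ over $\mathbb{Q}$, your conclusion is equivalent to the paper's, and for $p \ge 5$ both arguments then finish identically via Lemma~\ref{unique_remainder}. The genuine divergence is at $p \in \{2,3\}$: the paper runs chains of divisibility manipulations (subtracting blocks, factoring, arriving at $\Phi_b(x) \mid x^2-1$ or $\Phi_b(x) \mid x^3-1$, impossible since $4 \mid b$, respectively $9 \mid b$), roughly a page of case work, whereas you observe that the surviving block is either a singleton or contains the constant term $-2$, and kill it with a modulus estimate: $\lvert 1+\zeta^2\rvert = 2$ or $\lvert 3-\zeta^2\rvert = 2$ forces $\zeta^2 = 1$, and $\lvert \zeta^{\ell}\rvert \in \{2, \tfrac{2}{3}\}$ is absurd. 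I checked the bookkeeping you deferred: for $p=3$ the class of $0$ is $\{0,t\}$ when $t \equiv_3 0$ and $\{0,t-1\}$ when $t \equiv_3 1$, with coefficients $c_t \in \{-1,-3\}$, $c_{t-1} \in \{1,3\}$ (never $\pm 2$, since $2t-1$ lands in those classes only when $t \equiv_3 2$, where the singletons $\{t-1\}, \{t\}$ are available instead); for $p=2$ the even block is $\{0,t-1,t+1\}$ for every odd $t$. So your argument is complete. What your route buys is brevity; what the paper's route buys is robustness of method: its divisibility calculus is the template reused in Lemma~\ref{square_free_2}, where the analogous small-prime blocks reduce to polynomials such as $2x^{10}+x^5+2$, all of whose roots have modulus $1$, so there absolute-value estimates alone could not reach a contradiction.
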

\begin{proof}
    Let $b \in \mathbb{N}$ be such that $p^2 \mid b$ for some prime number $p$. In this case, $\frac{b}{p}$ is a positive integer divisible by $p$, hence we get that $\Phi_b(x) = \Phi_{\frac{b}{p}}(x^p)$ (see, for example, \cite[p.\ 160]{Nagell}).

    Now, suppose that $\Phi_b(x) \mid Q_t(x)$ for some odd integer $t \ge 3$. It follows that $Q_t(x) = V(x) \Phi_b(x)$ for some polynomial $V(x) \in \mathbb{Q}[x]$. Let $Q_t^{(j)}(x)$ be the polynomial composed of all the terms of $Q_t(x)$ whose powers are congruent to $j$ modulo $p$, for each $j = \overline{0, p-1}$. Similarly, let $V^{(j)}(x)$ be the polynomial composed of all the terms of $V(x)$ whose powers are congruent to $j$ modulo $p$, for each $j = \overline{0, p-1}$. Given the fact that the powers of all the terms of $\Phi_b(x)$ are divisible by $p$, it swiftly follows that
    \[
        Q_t^{(j)}(x) = V^{(j)}(x) \Phi_b(x)
    \]
    must hold for all the $j = \overline{0, p-1}$. In other words, we necessarily have
    \[
        \Phi_b(x) \mid Q_t^{(j)}(x)
    \]
    for all the $j = \overline{0, p-1}$. The same reasoning and notation can be applied in the case that we suppose that $\Phi_b(x) \mid R_t(x)$ is true for some odd $t \ge 3$. We will finalize the proof of the lemma by taking into consideration three separate cases depending on the value of the prime number $p$.

    \bigskip\noindent
    \emph{Case $p \ge 5$}.\quad
    In this case, Lemma \ref{unique_remainder} dictates that the set $L_t$ must contain an element whose remainder modulo $p$ is unique within that set. In other words, there exists a $j,\, 0 \le j \le p-1$ such that both $Q_t^{(j)}(x)$ and $R_t^{(j)}(x)$ consist of exactly one non-zero term, i.e.\ have the form $c \, x^a$ for some $c \in \mathbb{Z} \setminus \{ 0 \}$ and $a \in \mathbb{N}_0$. If we suppose that either $\Phi_b(x) \mid Q_t(x)$ or $\Phi_b(x) \mid R_t(x)$, this implies that $\Phi_b(x)$ must necessarily divide a polynomial with the aforementioned form $c \, x^a$, which is clearly impossible.

    \bigskip\noindent
    \emph{Case $p = 3$}.\quad
    This case can be quickly resolved by taking into consideration the modular values given in Table \ref{the_remainders_3}. We divide the case into three subcases depending on the value of $t \bmod 3$.

    \begin{table}[h!t]
    {\footnotesize
    \begin{center}
    \begin{tabular}{rccc}
    \toprule & $t \equiv_3 0$ & $t \equiv_3 1$ & $t \equiv_3 2$ \\
    \midrule
    $0 \bmod 3$ & $0$ & $0$ & $0$\\
    $(t-2) \bmod 3$ & $1$ & $2$ & $0$\\
    $(t-1) \bmod 3$ & $2$ & $0$ & $1$\\
    $t \bmod 3$ & $0$ & $1$ & $2$\\
    $(t+1) \bmod 3$ & $1$ & $2$ & $0$\\
    $(2t-1) \bmod 3$ & $2$ & $1$ & $0$\\
    \bottomrule
    \end{tabular}
    \end{center}
    \caption{The elements of the set $L_t$ modulo $3$.}
    \label{the_remainders_3}
    }
    \end{table}

    \medskip\noindent
    \emph{Subcase $t \equiv_3 0$}.\quad
    If we suppose that $\Phi_b(x) \mid Q_t(x)$, we get $\Phi_b(x) \mid x^{t+1} - x^{t-2}$. Similarly, $\Phi_b(x) \mid R_t(x)$ would imply $\Phi_b(x) \mid -x^{t+1} + x^{t-2}$. Either way, we obtain
    \begin{alignat*}{2}
        && \Phi_b(x) &\mid x^{t+1} - x^{t-2}\\
        \implies \quad && \Phi_b(x) &\mid x^{t-2} (x^3 - 1)\\
        \implies \quad && \Phi_b(x) &\mid x^3 - 1 ,
    \end{alignat*}
    which is not possible.

    \medskip\noindent
    \emph{Subcase $t \equiv_3 1$}.\quad
    Regardless of whether we suppose that $\Phi_b(x) \mid Q_t(x)$ holds or $\Phi_b(x) \mid R_t(x)$, we obtain that $\Phi_b(x) \mid x^{t+1} - x^{t-2}$ must be true, which is impossible as we have already demonstrated in the previous subcase.

    \medskip\noindent
    \emph{Subcase $t \equiv_3 2$}.\quad
    Bearing in mind the modular values given in Table \ref{the_remainders_3}, we conclude that $\Phi_b(x) \mid Q_t(x)$ or $\Phi_b(x) \mid R_t(x)$ would surely imply that $\Phi_b(x) \mid y^t$, which is not possible.
    
    \bigskip\noindent
    \emph{Case $p = 2$}.\quad
    In this case, we have that the integers $t-2, t, 2t-1$ are odd, while $0, t-1, t+1$ are even. We will resolve this case separately for $Q_t(x)$ and $R_t(x)$, thus yielding two subcases.

    \medskip\noindent
    \emph{Subcase $\Phi_b(x) \mid Q_t(x)$}.\quad
    Suppose that $\Phi_b(x) \mid Q_t(x)$. We now get
    \begin{align}
        \label{aux_1}\Phi_b(x) &\mid 2 x^{2t-1} - x^t - x^{t-2} ,\\
        \label{aux_2}\Phi_b(x) &\mid x^{t+1} + x^{t-1} - 2 .
    \end{align}
    If we take into consideration that $2 x^{2t-1} - x^t - x^{t-2} = x^{t-2} (2x^{t+1} - x^2 - 1)$, Eq.~(\ref{aux_1}) helps us obtain
    \begin{equation}\label{aux_3}
        \Phi_b(x) \mid 2x^{t+1} - x^2 - 1 .
    \end{equation}
    By subtracting the right-hand sides of Eqs.\ (\ref{aux_2}) and (\ref{aux_3}), we further conclude that
    \begin{alignat}{2}
        \nonumber && \Phi_b(x) &\mid x^{t+1} - x^{t-1} - x^2 + 1\\
        \nonumber \implies \quad && \Phi_b(x) &\mid (x^{t-1} - 1)(x^2 - 1)\\
        \label{aux_4} \implies \quad && \Phi_b(x) &\mid x^{t-1} - 1 .
    \end{alignat}
    Now, if we subtract the right-hand sides of Eqs.\ (\ref{aux_2}) and (\ref{aux_4}), we get
    \[
        \Phi_b(x) \mid x^{t+1} - 1 ,
    \]
    which directly implies
    \begin{alignat*}{2}
        && \Phi_b(x) &\mid (x^{t+1} - 1) - (x^{t-1} - 1)\\
        \implies \quad && \Phi_b(x) &\mid x^{t+1} - x^{t-1}\\
        \implies \quad && \Phi_b(x) &\mid x^{t-1}(x^2 - 1)\\
        \implies \quad && \Phi_b(x) &\mid x^2 - 1 ,
    \end{alignat*}
    which is not possible. Hence, $\Phi_b(x) \mid Q_t(x)$ cannot hold, as desired.

    \medskip\noindent
    \emph{Subcase $\Phi_b(x) \mid R_t(x)$}.\quad
    If we suppose that $\Phi_b(x) \mid R_t(x)$, it follows that
    \begin{align}
        \label{aux_5}\Phi_b(x) &\mid 2 x^{2t-1} - 3x^t + x^{t-2} ,\\
        \label{aux_6}\Phi_b(x) &\mid -x^{t+1} + 3x^{t-1} - 2 .
    \end{align}
    Given the fact that $2 x^{2t-1} - 3x^t + x^{t-2} = x^{t-2} (2x^{t+1} - 3x^2 + 1)$, Eq.~(\ref{aux_5}) yields
    \begin{equation}\label{aux_7}
        \Phi_b(x) \mid 2x^{t+1} - 3x^2 + 1 .
    \end{equation}
    Now, by subtracting the right-hand sides of Eqs.\ (\ref{aux_6}) and (\ref{aux_7}), we further obtain
    \begin{alignat}{2}
        \nonumber && \Phi_b(x) &\mid 3 x^{t+1} - 3x^{t-1} - 3x^2 + 3\\
        \nonumber \implies \quad && \Phi_b(x) &\mid 3(x^{t-1} - 1)(x^2 - 1)\\
        \label{aux_8} \implies \quad && \Phi_b(x) &\mid x^{t-1} - 1 .
    \end{alignat}
    If we take the right-hand side of Eq.\ (\ref{aux_8}), multiply it by two, then subtract it from the right-hand side of Eq.\ (\ref{aux_6}), we will get another polynomial that is divisible by $\Phi_b(x)$. In other words, we have
    \begin{alignat*}{2}
        && \Phi_b(x) &\mid (-x^{t+1} + 3x^{t-1} - 2) - 2(x^{t-1} - 1)\\
        \implies \quad && \Phi_b(x) &\mid -x^{t+1} + x^{t-1}\\
        \implies \quad && \Phi_b(x) &\mid x^{t-1}(1 - x^2)\\
        \implies \quad && \Phi_b(x) &\mid 1 - x^2 ,
    \end{alignat*}
    which is not possible. Thus, $\Phi_b(x) \mid R_t(x)$ cannot be true.
\end{proof}

Lemma \ref{square_free} tells us that the only cyclotomic polynomials $\Phi_b(x)$ that could divide either $Q_t(x)$ or $R_t(x)$ are those where $b \in \mathbb{N}$ is a square-free integer. In fact, for any odd $t \ge 3$, the only cyclotomic polynomials that divide these polynomials are $\Phi_1(b)$ and $\Phi_2(b)$. This observation will prove to be of the utmost importance while formulating the complete proof of Theorem \ref{main_theorem_1} later on. However, in order to prove this statement, we shall need two more additional lemmas. We begin with the next one.

\begin{lemma}\label{p_2p}
    For each odd $t \ge 3$ and each prime number $p \ge 7$, neither $Q_t(x)$ nor $R_t(x)$ can be divisible by the cyclotomic polynomial $\Phi_p(x)$ or the cyclotomic polynomial $\Phi_{2p}(x)$.
\end{lemma}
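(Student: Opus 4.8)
The plan is to reduce each divisibility claim to a statement about the coefficients of $Q_t$ and $R_t$ after collapsing exponents modulo $p$, and then to exploit the fact that six nonzero terms simply cannot fill $p \ge 7$ residue classes evenly. For a prime $p$ and a polynomial $P(x) \in \mathbb{Z}[x]$, write $\bar P(x) = \sum_{r=0}^{p-1} c_r x^r$ for the reduction of $P$ modulo $x^p - 1$, so that $c_r$ is the sum of the coefficients of all terms of $P$ whose exponent is congruent to $r$ modulo $p$. Since the minimal polynomial of a primitive $p$-th root of unity $\zeta$ is $\Phi_p(x) = 1 + x + \cdots + x^{p-1}$, evaluation gives $P(\zeta) = \sum_r c_r \zeta^r$, which vanishes if and only if $c_0 = c_1 = \cdots = c_{p-1}$. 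Thus $\Phi_p(x) \mid P(x)$ is equivalent to all of the $c_r$ being equal. Note that Theorem~\ref{filaseta} by itself is not enough for this lemma: applied to $b = p$ it only yields $\Phi_1(x) \mid Q_t(x)$, which is in fact true because $Q_t(1) = 0$, so a direct analysis of the residues is required. The bound $p \ge 7$ is precisely the Filaseta--Schinzel threshold $p - 2 > N - 2 = 4$, which will instead be put to use elsewhere to reduce the general composite modulus to the single-prime case handled here.

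For the first claim I would argue as follows. The polynomials $Q_t(x)$ and $R_t(x)$ each have exactly six nonzero terms, with exponent set $L_t = \{0, t-2, t-1, t, t+1, 2t-1\}$ and with every coefficient equal to one of $\pm 1, \pm 2$, hence nonzero. Fix a prime $p \ge 7$ and suppose $\Phi_p(x) \mid Q_t(x)$. Since there are $p \ge 7$ residue classes modulo $p$ but only six exponents in $L_t$, the pigeonhole principle guarantees at least one residue class $r_0$ containing no exponent of $L_t$, so $c_{r_0} = 0$. The divisibility then forces $c_r = 0$ for every $r$. On the other hand, Lemma~\ref{unique_remainder} (which applies since $p \ge 5$) provides an element of $L_t$ whose remainder modulo $p$ is attained by no other element; the coefficient $c_r$ of that class is therefore a single one of the values $\pm 1, \pm 2$ and is nonzero, a contradiction. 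The identical reasoning, using that all coefficients of $R_t$ are likewise nonzero, rules out $\Phi_p(x) \mid R_t(x)$.

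For the second claim I would transfer everything to the previous case through the standard identity $\Phi_{2p}(x) = \Phi_p(-x)$, valid for every odd prime $p$. Consequently $\Phi_{2p}(x) \mid Q_t(x)$ holds if and only if $\Phi_p(x) \mid Q_t(-x)$, and similarly for $R_t$. Because $t$ is odd, passing from $Q_t(x)$ to $Q_t(-x)$ only flips the signs of the coefficients sitting on odd exponents; the exponent set stays exactly $L_t$ and all six coefficients remain nonzero. The very same pigeonhole-plus-uniqueness argument, now applied to $Q_t(-x)$ and $R_t(-x)$, then yields $\Phi_p(x) \nmid Q_t(-x)$ and $\Phi_p(x) \nmid R_t(-x)$, which is exactly what is needed.

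The main thing to get right is the equivalence set up in the first paragraph — that $\Phi_p(x) \mid P(x)$ is the same as the equality of all residue-grouped coefficients — together with the bookkeeping that the substitution $x \mapsto -x$ preserves both the exponent set $L_t$ and the nonvanishing of each coefficient. Everything else is pigeonhole and an appeal to Lemma~\ref{unique_remainder}; no case analysis on $t \bmod p$ is required, since $p \ge 7$ already forces an empty residue class. It is worth double-checking the behaviour at the smallest admissible value $t = 3$, but because the exponents $0 < t-2 < t-1 < t < t+1 < 2t-1$ are genuinely distinct for every odd $t \ge 3$, the count of six nonzero terms is always exact and the argument goes through uniformly.
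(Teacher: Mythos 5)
Your proof is correct and follows essentially the same route as the paper: collapse the exponents of $Q_t$ and $R_t$ modulo $p$, observe that divisibility by $\Phi_p(x)$ (or $\Phi_{2p}(x)$) would force a polynomial with at most six nonzero residue-class coefficients to be either identically zero or a nonzero multiple of a polynomial with $p \ge 7$ nonzero terms, and invoke Lemma~\ref{unique_remainder} to exhibit a residue class whose single nonzero coefficient rules out the zero case. The only departures are cosmetic: you handle $\Phi_{2p}$ via the identity $\Phi_{2p}(x) = \Phi_p(-x)$ and the substitution $x \mapsto -x$, a tidier equivalent of the paper's explicit sign bookkeeping with the factors $(-1)^{\lfloor e/p \rfloor}$, and your claim that all coefficients lie in $\{\pm 1, \pm 2\}$ is inaccurate for $R_t$ (which has coefficients $\pm 3$), though only their nonvanishing is actually used.
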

\begin{proof}
    First of all, it is important to notice that for all prime numbers $p \ge 7$ we have
    \begin{align*}
        \Phi_p(x) &= \sum_{j = 0}^{p-1} x^j,\\
        \Phi_{2p}(x) &= \sum_{j = 0}^{p-1} (-1)^j x^j .
    \end{align*}
    Here, it is clear that $\deg \Phi_p = \deg \Phi_{2p} = p-1$. We will finish the proof of the lemma by splitting the problem to two separate cases depending on whether we are dealing with $\Phi_p(x)$ or $\Phi_{2p}(x)$.

    \bigskip\noindent
    \emph{Case $\Phi_p(x)$}.\quad
    Let $Q_t^{\bmod p}(x)$ and $R_t^{\bmod p}(x)$ be the following two polynomials:
    \begin{align*}
        Q_t^{\bmod p}(x) &= 2x^{(2t-1) \bmod p} + x^{(t+1) \bmod p} - x^{t \bmod p} + x^{(t-1) \bmod p} - x^{(t-2) \bmod p} - 2,\\
        R_t^{\bmod p}(x) &= 2x^{(2t-1) \bmod p} - x^{(t+1) \bmod p} - 3x^{t \bmod p} + 3x^{(t-1) \bmod p} + x^{(t-2) \bmod p} - 2.
    \end{align*}
    Suppose that $\Phi_p(x) \mid Q_t(x)$. In this case, it is clear that $\Phi_p(x) \mid Q_t^{\bmod p}(x)$ must hold, too. Given the fact that $\deg Q_t^{\bmod p} \le p-1 = \deg \Phi_p$, we further get two possibilities:
    \begin{itemize}
        \item $Q_t^{\bmod p}(x) \equiv 0$;
        \item $Q_t^{\bmod p}(x) = c \, \Phi_p(x)$ for some $c \in \mathbb{Q} \setminus \{ 0 \}$.
    \end{itemize}
    By virtue of Lemma \ref{unique_remainder}, there must exist a non-zero term in $Q_t(x)$ whose power has a unique remainder modulo $p$. This directly implies that $Q_t^{\bmod p}(x) \equiv 0$ cannot hold. On the other hand, $Q_t^{\bmod p}(x) = c \, \Phi_p(x)$ implies that $Q_t^{\bmod p}(x)$ and $\Phi_p(x)$ must have the same number of non-zero terms, i.e.\ $Q_t^{\bmod p}(x)$ needs to have exactly $p$ non-zero terms. This is clearly impossible due to the fact that $Q_t^{\bmod p}(x)$ has at most six non-zero terms. We conclude that $\Phi_p(x) \mid Q_t(x)$ cannot be true.

    The proof of $\Phi_p(x) \nmid R_t(x)$ is completely analogous to the previously described proof of $\Phi_p(x) \nmid Q_t(x)$. Thus, it will be left out.
    
    \bigskip\noindent
    \emph{Case $\Phi_{2p}(x)$}.\quad
    Let $\hat{Q}_t^{\bmod p}(x)$ and $\hat{R}_t^{\bmod p}(x)$ be the following two polynomials:
    \begin{align*}
        \hat{Q}_t^{\bmod p}(x) = 2&(-1)^{\lfloor\frac{2t-1}{p}\rfloor}x^{(2t-1) \bmod p} + (-1)^{\lfloor\frac{t+1}{p}\rfloor}x^{(t+1) \bmod p} - (-1)^{\lfloor\frac{t}{p}\rfloor}x^{t \bmod p}\\
        & \qquad + (-1)^{\lfloor\frac{t-1}{p}\rfloor}x^{(t-1) \bmod p} - (-1)^{\lfloor\frac{t-2}{p}\rfloor}x^{(t-2) \bmod p} - 2 ,\\
        \hat{R}_t^{\bmod p}(x) = 2&(-1)^{\lfloor\frac{2t-1}{p}\rfloor}x^{(2t-1) \bmod p} - (-1)^{\lfloor\frac{t+1}{p}\rfloor}x^{(t+1) \bmod p} - 3(-1)^{\lfloor\frac{t}{p}\rfloor}x^{t \bmod p}\\
        & \qquad + 3(-1)^{\lfloor\frac{t-1}{p}\rfloor}x^{(t-1) \bmod p} + (-1)^{\lfloor\frac{t-2}{p}\rfloor}x^{(t-2) \bmod p} - 2 .
    \end{align*}
    Since each primitive $2p$-th root of unity gives $-1$ when raised to the power of $p$, it becomes evident that $\Phi_{2p}(x) \mid Q_t(x)$ is equivalent to $\Phi_{2p}(x) \mid \hat{Q}_t^{\bmod p}(x)$. Likewise, $\Phi_{2p}(x) \mid R_t(x)$ will hold if and only if $\Phi_{2p}(x) \mid \hat{R}_t^{\bmod p}(x)$ does, too.

    Now, suppose that $\Phi_{2p}(x) \mid Q_t(x)$. In this case we obtain $\Phi_{2p} \mid \hat{Q}_t^{\bmod p}(x)$, where $\deg \hat{Q}_t^{\bmod p} \le p-1 = \deg \Phi_{2p}$. Like in the previous case, we get two possible options:
    \begin{itemize}
        \item $\hat{Q}_t^{\bmod p}(x) \equiv 0$;
        \item $\hat{Q}_t^{\bmod p}(x) = c \, \Phi_{2p}(x)$ for some $c \in \mathbb{Q} \setminus \{ 0 \}$.
    \end{itemize}
    Lemma \ref{unique_remainder} dictates that there must exist a non-zero term in $Q_t(x)$ whose power has a unique remainder modulo $p$, which means that it is impossible for $\hat{Q}_t^{\bmod p}(x) \equiv 0$ to be true. On the other hand, from $\hat{Q}_t^{\bmod p}(x) = c \, \Phi_{2p}(x)$ we get that $\hat{Q}_t^{\bmod p}(x)$ must have exactly $p$ non-zero terms. However, this is not possible since it is clear that this polynomial cannot have more than six non-zero terms. It swiftly follows that $\Phi_{2p}(x) \nmid Q_t(x)$.

    The proof of $\Phi_{2p}(x) \nmid R_t(x)$ is entirely analogous to the elaborated proof of $\Phi_{2p}(x) \nmid Q_t(x)$. For this reason, we choose to leave it out.
\end{proof}

As the final piece of the puzzle, we will need to show that some concrete cyclotomic polynomials cannot divide $Q_t(x)$ or $R_t(x)$, for each odd integer $t \ge 3$. More precisely, we will turn our interest to the cyclotomic polynomials $\Phi_b(x)$ where $b$ is a square-free integer all of whose prime factors belong to the set $\{2, 3, 5\}$.

\begin{lemma}\label{small_ones}
    For each odd $t \ge 3$ and each positive integer $b \in \{ 3, 5, 6, 10, 15, 30 \}$, the cyclotomic polynomial $\Phi_b(x)$ divides neither $Q_t(x)$ nor $R_t(x)$.
\end{lemma}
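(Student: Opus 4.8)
The plan is to exploit that $\Phi_b$ is irreducible, hence prime, in $\mathbb{Q}[x]$. Fixing a primitive $b$-th root of unity $\zeta$, the divisibility $\Phi_b(x)\mid Q_t(x)$ is equivalent to $Q_t(\zeta)=0$, and likewise for $R_t$. Before descending into cases, I would record the factorisations
\[
Q_t(x)=A(x)+B(x),\qquad R_t(x)=A(x)-B(x),
\]
where $A(x)=2(x^t+1)(x^{t-1}-1)$ and $B(x)=x^{t-2}(x-1)(x+1)^2$; these are verified by expanding. Since $\Phi_b$ is prime and $Q_t(x)R_t(x)=A(x)^2-B(x)^2$, we have $\Phi_b\mid Q_t$ or $\Phi_b\mid R_t$ if and only if $\Phi_b\mid A^2-B^2$. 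Thus both polynomials can be ruled out simultaneously by proving $A(\zeta)^2\neq B(\zeta)^2$. Note that for $b>2$ we have $\zeta\neq\pm1$, so $B(\zeta)\neq0$; consequently $\Phi_b\mid Q_t$ would force $A(\zeta)=-B(\zeta)\neq0$, while $\Phi_b\mid R_t$ would force $A(\zeta)=B(\zeta)\neq0$.

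The second ingredient is that, because $\zeta^b=1$, the value $Q_t(\zeta)$ depends only on $t\bmod b$. Writing $u=\zeta^t$ and using $\zeta^{2t-1}=u^2\zeta^{-1}$, $\zeta^{t\pm1}=u\zeta^{\pm1}$, $\zeta^{t-2}=u\zeta^{-2}$, the equation $Q_t(\zeta)=0$ becomes the quadratic
\[
2\zeta\,u^2+(\zeta-1)(\zeta^2+1)\,u-2\zeta^2=0 ,
\]
and $R_t(\zeta)=0$ becomes $2\zeta\,u^2-(\zeta-1)(\zeta^2+4\zeta+1)\,u-2\zeta^2=0$. Here $u$ must itself be a $b$-th root of unity, and since $t$ is odd it must be an odd power of $\zeta$ (this restriction is only active for even $b$). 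For each of the six moduli the task therefore reduces to a finite verification: run $u$ through the admissible $b$-th roots of unity (equivalently, tabulate the residues of the exponents in $L_t$ modulo $b$ for each class of $t$, exactly as in Table~\ref{the_remainders_3}) and confirm that the corresponding element of $\mathbb{Q}(\zeta)$ is nonzero, using that the powers $1,\zeta,\dots,\zeta^{\deg\Phi_b-1}$ are linearly independent over $\mathbb{Q}$. For $b\in\{3,6\}$ this collapses to the same elementary manipulations with $x^3-1$ and $x^2-1$ already seen in the proof of Lemma~\ref{square_free}, and $b\in\{5,10\}$ add only a handful of classes.

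The main obstacle is $b=15$ and $b=30$. Here $\Phi_b$ has degree $8$, there are many residue classes of $t$ to inspect, and, decisively, the Filaseta--Schinzel reduction (Theorem~\ref{filaseta}) cannot be invoked to shrink $b$: with $N=6$ nonzero terms one needs $\sum_{p\mid b}(p-2)>N-2=4$, yet $\sum_{p\mid 15}(p-2)=\sum_{p\mid 30}(p-2)=4$ fails the strict inequality. This is precisely why these moduli survive as irreducible base cases, and a direct computation over the residues of $t$ modulo $b$ (or, more economically, over the two roots of each quadratic in $u$, checking neither is an admissible root of unity) seems unavoidable. I would organise this final verification by listing, for each admissible $u$, the value of the left-hand side of the relevant quadratic and confirming it is nonzero.
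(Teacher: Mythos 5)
Your setup is correct and in places sharper than the paper's: the decomposition $Q_t = A + B$, $R_t = A - B$ with $A(x) = 2(x^t+1)(x^{t-1}-1)$ and $B(x) = x^{t-2}(x-1)(x+1)^2$ checks out; the passage to the quadratic $2\zeta u^2 + (\zeta-1)(\zeta^2+1)u - 2\zeta^2 = 0$ in $u = \zeta^t$ is verified by multiplying $Q_t(\zeta)$ through by $\zeta^2$ (likewise for $R_t$); and your observation that Theorem~\ref{filaseta} is powerless here, because $\sum_{p \mid 15}(p-2) = \sum_{p \mid 30}(p-2) = 4 = N-2$ fails the strict inequality, correctly identifies why exactly these moduli must be handled as base cases. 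Structurally, though, this is the paper's argument: the reduction to finitely many residue classes of $t$ modulo $b$ is the paper's first step, merely phrased through $u=\zeta^t$ rather than through the exponent-reduced polynomials $Q_t^{\bmod b}$ and $R_t^{\bmod b}$.

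The gap is that your proof stops exactly where the lemma's content begins. Everything you write is a scheme for a finite verification, but the verification is never performed: for $b \in \{5,10\}$ you assert it ``adds only a handful of classes'' without computing any of them, and for $b \in \{15,30\}$ you explicitly defer to a computation you \emph{would} organise --- fifteen admissible values of $u$ per modulus, for each of two quadratics, with values lying in a field of degree $8$ over $\mathbb{Q}$. Since the reduction to a finite check is the easy half, common to both arguments, omitting the check means the statement is not proved: a single admissible $u$ at which one of the quadratics vanished would make the lemma false, and nothing in your argument excludes this. The paper closes precisely this gap by machine computation, tabulating $Q_t^{\bmod b}(x) \bmod \Phi_b(x)$ and $R_t^{\bmod b}(x) \bmod \Phi_b(x)$ for every $b \in \{3,5,6,10,15,30\}$ and every residue of $t$ in Appendices~\ref{appendix_qt} and \ref{appendix_rt}, and observing that no entry is the zero polynomial. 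To complete your version you would need the analogous table of values of $2\zeta u^2 + (\zeta-1)(\zeta^2+1)u - 2\zeta^2$ and $2\zeta u^2 - (\zeta-1)(\zeta^2+4\zeta+1)u - 2\zeta^2$ over all admissible $u$ (or a rigorous argument that neither quadratic has a root of unity of the admissible parity among its roots), which is the same amount of work the paper delegates to symbolic computation.
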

\begin{proof}
    Similarly as in the proof of Lemma \ref{p_2p}, let $Q_t^{\bmod b}(x)$ and $R_t^{\bmod b}(x)$ be the next two polynomials:
    \begin{align*}
        Q_t^{\bmod b}(x) &= 2x^{(2t-1) \bmod b} + x^{(t+1) \bmod b} - x^{t \bmod b} + x^{(t-1) \bmod b} - x^{(t-2) \bmod b} - 2,\\
        R_t^{\bmod b}(x) &= 2x^{(2t-1) \bmod b} - x^{(t+1) \bmod b} - 3x^{t \bmod b} + 3x^{(t-1) \bmod b} + x^{(t-2) \bmod b} - 2.
    \end{align*}
    Here, it is imperative for us to notice that $\Phi_b(x) \mid Q_t(x) \iff \Phi_b(x) \mid Q_t^{\bmod b}(x)$ and also that $\Phi_b(x) \mid R_t(x) \iff \Phi_b(x) \mid R_t^{\bmod b}(x)$. However, for a fixed value of $b \in \mathbb{N}$, there exist only finitely many polynomials $Q_t^{\bmod b}(x)$ and $R_t^{\bmod b}(x)$, as $t$ ranges over the odd integers greater than or equal to three. This is a direct consequence of the fact that $t$ can only have finitely many remainders modulo $b$. Hence, if we are able to show that $\Phi_b(x)$ divides none of these concrete polynomials, this is sufficient to prove that $\Phi_b(x)$ divides neither $Q_t(x)$ nor $R_t(x)$.

    In order to prove the lemma, it is enough to demonstrate that $\Phi_b(x) \nmid Q_t^{\bmod b}(x)$ and $\Phi_b(x) \nmid R_t^{\bmod b}(x)$ for all the $b \in \{3, 5, 6, 10, 15, 30\}$ and for all the possible remainders $t \bmod b$. Doing this is trivial via computer with the help of some symbolic computation software. We disclose the required computational results in the form of two tables which are given in Appendices \ref{appendix_qt} and \ref{appendix_rt}. These results clearly indicate that for each $b \in \{3, 5, 6, 10, 15, 30\}$ the remainders $Q_t^{\bmod b}(x) \bmod \Phi_b(x)$ and $R_t^{\bmod b}(x) \bmod \Phi_b(x)$ can never be equal to the zero polynomial, regardless of what the value of $t \bmod b$ is, which completes the proof of the lemma.
\end{proof}

It is important to notice that Theorem \ref{filaseta} can very conveniently be used on the polynomials $Q_t(x)$ and $R_t(x)$, given the fact that they only have six non-zero terms. This means that if $\Phi_b(x) \mid Q_t(x)$ and $p \mid b$ for some square-free integer $b$ and some prime number $p \ge 7$, we can immediately deduct that $\Phi_{\frac{b}{p}}(x) \mid Q_t(x)$ is true as well. The same can be said regarding $R_t(x)$. This practically means that if the aforementioned division holds, we can cancel out as many prime factors of $b$ that are not below seven as we want, and the division will still have to hold. Bearing this in mind, we are now able to formulate and prove the following lemma regarding the divisibility of $Q_t(x)$ and $R_t(x)$ by cyclotomic polynomials.

\begin{lemma}\label{main_cyclotomic}
    For each odd $t \ge 3$, neither $Q_t(x)$ nor $R_t(x)$ can be divisible by a cyclotomic polynomial $\Phi_b(x)$ where $b \ge 3$.
\end{lemma}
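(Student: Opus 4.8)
The plan is to argue by contradiction, assuming that $\Phi_b(x) \mid Q_t(x)$ for some $b \ge 3$; the case of $R_t(x)$ is handled identically, since both polynomials have exactly six nonzero terms and every earlier lemma treats them in parallel. First I would invoke Lemma~\ref{square_free} to reduce immediately to square-free $b$, as no $b$ with a squared prime factor can produce such a division. Thus I may write $b = 2^a 3^c 5^e q_1 \cdots q_r$ with $a, c, e \in \{0, 1\}$ and $q_1 < \cdots < q_r$ distinct primes at least $7$.

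The engine of the argument is Theorem~\ref{filaseta} applied with $N = 6$ and a single prime. For any prime $p \ge 7$ dividing $b$, choosing $k = 1$ and $p_1 = p$ gives $\sum_j (p_j - 2) = p - 2 \ge 5 > 4 = N - 2$, and since $b$ is square-free the relevant exponent is $e_1 = 1$; hence $\Phi_{b/p}(x) \mid Q_t(x)$. Because $b/p$ is again square-free, this step iterates, so I may strip off any chosen subset of the primes $q_1, \ldots, q_r$ while retaining divisibility. The point is to strip them so that the surviving modulus falls inside the scope of an already-established lemma. I would then split into two cases according to whether a factor of $3$ or $5$ survives. If $3 \mid b$ or $5 \mid b$, I strip off every $q_i$; the surviving modulus $2^a 3^c 5^e$ is square-free and divisible by $3$ or $5$, hence lies in $\{3, 5, 6, 10, 15, 30\}$, contradicting Lemma~\ref{small_ones}. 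Otherwise $3 \nmid b$ and $5 \nmid b$, so $b = 2^a q_1 \cdots q_r$; since $b \ge 3$ forces $r \ge 1$, I strip off $q_2, \ldots, q_r$ to leave $2^a q_1$, which equals either $q_1$ or $2q_1$ with $q_1 \ge 7$, contradicting Lemma~\ref{p_2p}. Checking that $a, c, e$ range only over $\{0,1\}$ confirms these two cases exhaust all square-free $b \ge 3$, which completes the proof.

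The main point requiring care is that the case split must be driven by the presence of a factor of $3$ or $5$, because Theorem~\ref{filaseta} only licenses the removal of primes that are at least $7$: a factor of $2$, $3$, or $5$ can never be peeled away. Consequently I must route every modulus still containing $3$ or $5$ to Lemma~\ref{small_ones} and reserve Lemma~\ref{p_2p} for the ``large-prime-only'' moduli. The crux is verifying that after peeling, the residual modulus always matches exactly one of the two finite menus covered by those lemmas — in particular that the $\{3,5\}$-containing residues land precisely in $\{3,5,6,10,15,30\}$ and never collapse to $1$ or $2$, and that the large-prime-only residues reduce to $q_1$ or $2q_1$ rather than to a product of two or more large primes.
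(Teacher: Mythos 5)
Your proposal is correct and follows essentially the same route as the paper's own proof: reduce to square-free $b$ via Lemma~\ref{square_free}, strip primes $p \ge 7$ one at a time using Theorem~\ref{filaseta} (valid since $N = 6$ and $p - 2 \ge 5 > N - 2$), and then split according to whether $3$ or $5$ divides $b$, finishing with Lemma~\ref{small_ones} in the first case and Lemma~\ref{p_2p} in the second. Your writeup is in fact slightly more explicit than the paper's about why the residual moduli land exactly in $\{3,5,6,10,15,30\}$ or in $\{q_1, 2q_1\}$, but the argument is the same.
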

\begin{proof}
    The proof will only be given for $Q_t(x)$, given the fact that the proof regarding $R_t(x)$ is completely analogous. Suppose that $\Phi_b(x) \mid Q_t(x)$ for some $b \ge 3$. By virtue of Lemma \ref{square_free}, we know that $b$ needs to be a square-free integer. We now divide the problem into two separate cases, depending on whether $b$ is divisible by either $3$ or $5$, or not.

    \bigskip\noindent
    \emph{Case $3 \nmid b$ and $5 \nmid b$}.\quad
    In this case, it is clear that $b$ has at least one prime factor greater than $5$, since $b \notin \{1, 2\}$. If we repeatedly use Theorem \ref{filaseta} in order to cancel out all the prime factors of $b$ that are greater than $5$, until exactly one is left, we obtain that
    \[
        \Phi_{b'}(x) \mid Q_t(x)
    \]
    must hold, where $b'$ is square-free, not divisible by $3$ or $5$, and has exactly one prime factor greater than $5$. In other words, we get that either $\Phi_p(x) \mid Q_t(x)$ or $\Phi_{2p} \mid Q_t(x)$ holds, for some prime number $p \ge 7$. However, this is not possible according to Lemma \ref{p_2p}.

    \bigskip\noindent
    \emph{Case $3 \mid b$ or $5 \mid b$}.\quad
    Here, we can simply use Theorem \ref{filaseta} in order to cancel out all the prime factors of $b$ that are greater than $5$, until there are none left. This leads us to
    \[
        \Phi_{b'}(x) \mid Q_t(x),
    \]
    where $b'$ is square-free, divisible by $3$ or $5$, and has no prime factors outside of the set $\{2, 3, 5\}$. These conditions imply that $b' \in \{3, 5, 6, 10, 15, 30\}$. However, in this case $\Phi_{b'}(x) \mid Q_t(x)$ cannot possibly hold, as a direct consequence of Lemma~\ref{small_ones}.

    \bigskip
    Both of the cases have led to a contradiction, which means that $\Phi_b(x) \mid Q_t(x)$ cannot hold for any $b \ge 3$.
\end{proof}

Lemma \ref{main_cyclotomic} allows us to finish the proof of Theorem \ref{main_theorem_1}. We present the rest of the proof in the remainder of this section.

\bigskip\noindent
\emph{Proof of Theorem \ref{main_theorem_1}}.\quad
The proof for the case $t = 1$ is fairly straightforward. Due to the fact that $\mathcal{S}'_{1, n} = \left\{ \frac{n}{4}, \frac{n}{4} + 1 \right\}$, Eq.\ (\ref{polynomial_formula}) immediately gives us
\begin{equation}\label{case_t1_1}
    P(\zeta) = \zeta^{\frac{n}{4}} + \dfrac{1}{\zeta^{\frac{n}{4}}} + \zeta^{\frac{n}{4}+1} + \dfrac{1}{\zeta^{\frac{n}{4}+1}} ,
\end{equation}
where $\zeta$ is an arbitrary $n$-th root of unity different from $1$ and $-1$. Now, the condition $P(\zeta) = 0$ becomes equivalent to
\begin{alignat*}{2}
    && P(\zeta) &= 0\\
    \iff \quad && \zeta^{\frac{n}{4}+1} \left( \zeta^{\frac{n}{4}} + \dfrac{1}{\zeta^{\frac{n}{4}}} + \zeta^{\frac{n}{4}+1} + \dfrac{1}{\zeta^{\frac{n}{4}+1}} \right) &=0\\
    \iff \quad && \zeta^{\frac{n}{2} + 2} + \zeta^{\frac{n}{2} + 1} + \zeta + 1 &= 0\\
    \iff \quad && (\zeta + 1)\left(\zeta^{\frac{n}{2} + 1} + 1 \right) &= 0\\
    \iff \quad && \zeta^{\frac{n}{2} + 1} + 1 &= 0 .
\end{alignat*}
Since we know that $\zeta^{\frac{n}{2}} \in \{1, -1\}$, we obtain that $\zeta^{\frac{n}{2} + 1} + 1$ is equal to $\zeta + 1$ or $-\zeta + 1$. Either way, this value cannot be equal to $0$, hence we conclude that $P(\zeta) \neq 0$. Since the value $\zeta$ was arbitrarily chosen, we obtain that no $n$-th root of unity different from $1$ and $-1$ can be a root of $P(x)$. According to Lemma \ref{damnjanovic_lemma_1}, this means that $\mathcal{D}'_{1, n}$ is indeed a circulant nut graph for any $n \ge 8$ divisble by four, as desired.

Now, we turn our attention to the case when $t \ge 3$. In this scenario, Eq.\ (\ref{polynomial_formula}) gives us
\[
    P(\zeta) = \left( \zeta^{\frac{n}{4}} + \frac{1}{\zeta^{\frac{n}{4}}} \right) + \left( \zeta^{\frac{n}{4}+1} + \frac{1}{\zeta^{\frac{n}{4}+1}} \right) + \sum_{j=1}^{t-1}\left( \zeta^j + \frac{1}{\zeta^j}\right) + \sum_{j=\frac{n}{2}-t+1}^{\frac{n}{2}-1}\left( \zeta^j + \frac{1}{\zeta^j}\right),
\]
where $\zeta$ is an arbitrarily chosen $n$-th root of unity different from $1$ and $-1$. It is easy to further conclude that
\begin{equation}\label{polynomial_formula_2}
    P(\zeta) = \left( \zeta^{\frac{n}{4}+1} + \zeta^{\frac{n}{4}} + \frac{1}{\zeta^{\frac{n}{4}}} + \frac{1}{\zeta^{\frac{n}{4}+1}} \right) + \sum_{j=1}^{t-1}\left( \zeta^j + \frac{1}{\zeta^j} + \zeta^{\frac{n}{2} - j} + \frac{1}{\zeta^{\frac{n}{2}-j}} \right) .
\end{equation}
We will finish the proof by showing that $P(\zeta) \neq 0$ must necessarily hold. In order to make the proof more concise, we will divide it into two cases depending on whether $\zeta^{\frac{n}{2}}$ is equal to $1$ or $-1$.

\bigskip\noindent
\emph{Case $\zeta^{\frac{n}{2}} = -1$}.\quad
In this case, we have that $\zeta^{\frac{n}{2} - j} = -\dfrac{1}{\zeta^j}$ and $\dfrac{1}{\zeta^{\frac{n}{2} - j}} = -\zeta^j$, which quickly implies
\[
    \zeta^j + \frac{1}{\zeta^j} + \zeta^{\frac{n}{2} - j} + \frac{1}{\zeta^{\frac{n}{2}-j}} = 0
\]
for any $j = \overline{1, t-1}$. Thus, Eq.\ (\ref{polynomial_formula_2}) simplifies to the following formula:
\[
    P(\zeta) = \zeta^{\frac{n}{4}+1} + \zeta^{\frac{n}{4}} + \frac{1}{\zeta^{\frac{n}{4}}} + \frac{1}{\zeta^{\frac{n}{4}+1}} .
\]
However, this formula is exactly the same as Eq.\ (\ref{case_t1_1}), which we got while dealing with the case $t = 1$. Hence, an almost absolutely identical proof can be used in order to show that $P(\zeta) \neq 0$.

\bigskip\noindent
\emph{Case $\zeta^{\frac{n}{2}} = 1$}.\quad
Here, we obtain that $\zeta^{\frac{n}{2} - j} = \dfrac{1}{\zeta^j}$ and $\dfrac{1}{\zeta^{\frac{n}{2} - j}} = \zeta^j$. This means that
\[
    \zeta^j + \frac{1}{\zeta^j} + \zeta^{\frac{n}{2} - j} + \frac{1}{\zeta^{\frac{n}{2}-j}} = 2 \left( \zeta^j + \frac{1}{\zeta^j} \right)
\]
for any $j = \overline{1, t-1}$. According to Eq.\ (\ref{polynomial_formula_2}), the condition $P(\zeta) = 0$ becomes equivalent to
\begin{alignat}{2}
    \nonumber && P(\zeta) &= 0\\
    \nonumber \iff \quad && \left( \zeta^{\frac{n}{4}+1} + \zeta^{\frac{n}{4}} + \frac{1}{\zeta^{\frac{n}{4}}} + \frac{1}{\zeta^{\frac{n}{4}+1}} \right) + 2 \sum_{j=1}^{t-1}\left( \zeta^j + \frac{1}{\zeta^j} \right) &= 0\\
    \nonumber \iff \quad && \left( \zeta^{\frac{n}{4}+1} + \zeta^{\frac{n}{4}} + \frac{1}{\zeta^{\frac{n}{4}}} + \frac{1}{\zeta^{\frac{n}{4}+1}} \right) - 2 + 2 \sum_{j=1-t}^{t-1} \zeta^j &= 0\\
    \nonumber \iff \quad && \zeta^{t-1} \left( \left( \zeta^{\frac{n}{4}+1} + \zeta^{\frac{n}{4}} + \frac{1}{\zeta^{\frac{n}{4}}} + \frac{1}{\zeta^{\frac{n}{4}+1}} \right) - 2 + 2 \sum_{j=1-t}^{t-1} \zeta^j \right) &= 0\\
    \nonumber \iff \quad && \left( \zeta^{t+\frac{n}{4}} + \zeta^{t+\frac{n}{4}-1} + \zeta^{t-\frac{n}{4}-1} + \zeta^{t-\frac{n}{4}-2} \right) - 2\zeta^{t-1} + 2 \sum_{j=0}^{2t-2} \zeta^j &= 0\\
    \nonumber \iff \quad && (\zeta - 1)\left( \zeta^{t+\frac{n}{4}} + \zeta^{t+\frac{n}{4}-1} + \zeta^{t-\frac{n}{4}-1} + \zeta^{t-\frac{n}{4}-2} - 2\zeta^{t-1} + 2 \sum_{j=0}^{2t-2} \zeta^j \right) &= 0\\
    \label{polynomial_formula_3} \iff \quad && \zeta^{t+\frac{n}{4}+1} - \zeta^{t+\frac{n}{4}-1} + \zeta^{t-\frac{n}{4}} - \zeta^{t-\frac{n}{4}-2} - 2\zeta^t + 2\zeta^{t-1} + 2 \zeta^{2t-1} - 2 &= 0 .
\end{alignat}

Now, we will divide the problem into two subcases depending on whether $\zeta^{\frac{n}{4}}$ is equal to $1$ or $-1$.

\medskip\noindent
\emph{Subcase $\zeta^{\frac{n}{4}} = 1$}.\quad
In this subcase, it is easy to see that Eq.\ (\ref{polynomial_formula_3}) further becomes equivalent to
\begin{alignat*}{2}
    && \zeta^{t+\frac{n}{4}+1} - \zeta^{t+\frac{n}{4}-1} + \zeta^{t-\frac{n}{4}} - \zeta^{t-\frac{n}{4}-2} - 2\zeta^t + 2\zeta^{t-1} + 2 \zeta^{2t-1} - 2 &= 0\\
    \iff \quad && \zeta^{t+1} - \zeta^{t-1} + \zeta^{t} - \zeta^{t-2} - 2\zeta^t + 2\zeta^{t-1} + 2 \zeta^{2t-1} - 2 &= 0\\
    \iff \quad && 2\zeta^{2t-1} + \zeta^{t+1} - \zeta^t  + \zeta^{t-1} - \zeta^{t-2} - 2 &= 0 .
\end{alignat*}
Thus, if we suppose that $P(\zeta) = 0$, we then get that $\zeta$ is a root of the polynomial $Q_t(x)$. However, since $\zeta \neq 1, -1$ and $\zeta^n = 1$, this means that $\zeta$ is a primitive $b$-th root of unity for some $b \ge 3$. This implies that $\Phi_b(x)$ necessarily divides $Q_t(x)$, which is impossible according to Lemma \ref{main_cyclotomic}, hence $P(\zeta) = 0$ cannot be true.

\medskip\noindent
\emph{Subcase $\zeta^{\frac{n}{4}} = -1$}.\quad
Here, Eq.\ (\ref{polynomial_formula_3}) quickly becomes equivalent to
\begin{alignat*}{2}
    && \zeta^{t+\frac{n}{4}+1} - \zeta^{t+\frac{n}{4}-1} + \zeta^{t-\frac{n}{4}} - \zeta^{t-\frac{n}{4}-2} - 2\zeta^t + 2\zeta^{t-1} + 2 \zeta^{2t-1} - 2 &= 0\\
    \iff \quad && -\zeta^{t+1} + \zeta^{t-1} - \zeta^{t} + \zeta^{t-2} - 2\zeta^t + 2\zeta^{t-1} + 2 \zeta^{2t-1} - 2 &= 0\\
    \iff \quad && 2\zeta^{2t-1} - \zeta^{t+1} - 3\zeta^t  + 3\zeta^{t-1} + \zeta^{t-2} - 2 &= 0 .
\end{alignat*}
In a similar fashion, if we suppose that $P(\zeta) = 0$, we conclude that $\zeta$ must be a root of the polynomial $R_t(x)$. Due to the fact that $\zeta \neq 1, -1$ and $\zeta^n = 1$, we see that $\zeta$ is a primitive $b$-th root of unity for some $b \ge 3$. Hence, $\Phi_b(x) \mid R_t(x)$, which is again impossible by virtue of Lemma \ref{main_cyclotomic}. Thus, $P(\zeta) \neq 0$.
\hfill\qed

\section{\texorpdfstring{$\mathcal{D}''_{t, n}$}{D"(t, n)} family of circulant graphs}\label{d2_family}

Here, we give the second of the two central theorems disclosed in the paper.
\begin{theorem}\label{main_theorem_2}
    For each $t \in \mathbb{N}$ and $n \ge 4t + 6$ such that $n \equiv_4 2$, the circulant graph $\mathcal{D}''_{t, n}$ must be a $4t$-regular nut graph of order $n$.
\end{theorem}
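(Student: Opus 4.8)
The plan is to follow the blueprint of the proof of Theorem~\ref{main_theorem_1} as closely as possible. By Lemma~\ref{damnjanovic_lemma_1} together with the parity remarks already made in Section~\ref{preliminaries}, the graph $\mathcal{D}''_{t,n}$ automatically satisfies the first two conditions, so it suffices to show that $P(\zeta)\neq 0$ for every $n$-th root of unity $\zeta\notin\{1,-1\}$. Writing $a=\frac{n+2}{4}$ (so that $\frac{n+6}{4}=a+1$) and using Eq.~(\ref{polynomial_formula}), I would group $P(\zeta)$ into the four ``middle'' terms $\zeta^{a+1}+\zeta^{a}+\zeta^{-a}+\zeta^{-(a+1)}$ plus the symmetric sum $\sum_{j=1}^{t-1}\bigl(\zeta^{j}+\zeta^{-j}+\zeta^{\frac n2-j}+\zeta^{-(\frac n2-j)}\bigr)$, and then split on the value of $\zeta^{n/2}\in\{1,-1\}$, exactly as in the first family.

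In the case $\zeta^{n/2}=-1$ the summand vanishes term by term, leaving $P(\zeta)=\zeta^{a+1}+\zeta^{a}+\zeta^{-a}+\zeta^{-(a+1)}$; multiplying by $\zeta^{a+1}$ and using $2a=\frac n2+1$ produces $(\zeta+1)(\zeta^{n/2+2}+1)$, which equals $(\zeta+1)(1-\zeta^{2})$ once $\zeta^{n/2}=-1$ is substituted, and this is nonzero since $\zeta\neq\pm1$. The decisive new ingredient appears in the case $\zeta^{n/2}=1$. Here $\frac n2$ is odd, and $2a=\frac n2+1$ gives $\zeta^{2a}=\zeta^{n/2}\zeta=\zeta$; hence the element $u:=\zeta^{a}$ satisfies $u^{2}=\zeta$. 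Carrying out the same manipulation as in the first family (multiply by $\zeta^{t-1}$, then by $\zeta-1$) and finally substituting $\zeta=u^{2}$, two of the resulting terms cancel and, for $t\ge 3$, the equation $P(\zeta)=0$ becomes $S_t(u)=0$, where
\[
S_t(x)=2x^{4t-2}+x^{2t+3}-2x^{2t}+2x^{2t-2}-x^{2t-5}-2 .
\]
Crucially, the awkward subcase split on $\zeta^{n/4}$ used in the first family is now unnecessary: the single polynomial $S_t$ captures the whole case $\zeta^{n/2}=1$. Since $\zeta\notin\{1,-1\}$ and $4\nmid n$ forces $\zeta\neq\pm i$, the root of unity $u$ has order $b\ge 3$, and because $u^{n}=1$ with $4\nmid n$ its order additionally satisfies $4\nmid b$. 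Thus it suffices to prove that $\Phi_b(x)\nmid S_t(x)$ for every $b\ge 3$ with $4\nmid b$.

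To establish this I would reprove the analogues of Lemmas~\ref{unique_remainder}--\ref{main_cyclotomic} for the six-term polynomial $S_t$, whose exponent set is $L'_t=\{0,2t-5,2t-2,2t,2t+3,4t-2\}$. For every prime $p\ge 7$ the four inner exponents $2t-5,2t-2,2t,2t+3$ lie in a window of length $8<p$ (the case $p=7$ being checked by hand) and are therefore pairwise distinct modulo $p$; since at least two of them avoid both residues $0\bmod p$ and $(4t-2)\bmod p$, the unique-remainder property of Lemma~\ref{unique_remainder} survives for $p\ge 7$. This immediately yields the analogues of Lemma~\ref{p_2p} (ruling out $\Phi_p$ and $\Phi_{2p}$ for $p\ge 7$) and of the prime-power case in the square-free lemma for $p\ge 7$. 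The small cases $b\in\{3,5,6,10,15,30\}$ are disposed of by a direct modular computation as in Lemma~\ref{small_ones}, and the whole argument is assembled through Theorem~\ref{filaseta} exactly as in Lemma~\ref{main_cyclotomic}; note that because $4\nmid b$ we never have to treat the prime $p=2$ in the square-free step. Finally, the two degenerate values $t\in\{1,2\}$, for which $S_t$ is not a genuine six-term polynomial (the exponent $2t-5$ becomes negative), are handled separately by an elementary direct factorisation of $P(\zeta)$; for $t=1$ one finds $\zeta^{a+1}P(\zeta)$ factors through $(u^{2}+1)(u^{4}+1)$, which cannot vanish.

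I expect the main obstacle to be the square-free step for the primes $p=3$ and $p=5$. Unlike the first family, where the four exponents $t-2,t-1,t,t+1$ are consecutive and hence automatically have distinct residues modulo every $p\ge 5$, the exponent set $L'_t$ collapses modulo $3$ and modulo $5$ for certain residue classes of $t$; for instance, modulo $5$ one has $2t-5\equiv 2t$ and $2t+3\equiv 2t-2$, so for $t\equiv 0,1,3\pmod 5$ no exponent of $S_t$ has a unique residue. Consequently the clean unique-remainder argument is unavailable for $p\in\{3,5\}$, and these two primes must be ruled out by explicit case analysis on $t\bmod p$ followed by gcd-style subtractions applied to the components $S_t^{(j)}(x)$ of $S_t$ in each residue class modulo $p$ --- the same hands-on elimination that the first family needed for $p=2$ and $p=3$. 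Once this step is completed, the remaining structure transfers essentially verbatim from Section~\ref{d1_family}.
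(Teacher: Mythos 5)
Your proposal follows the paper's overall strategy (reduce to $P(\zeta)\neq 0$ via Lemma~\ref{damnjanovic_lemma_1}, pass to a six-term lacunary polynomial, and exclude all cyclotomic divisors using Theorem~\ref{filaseta}, a unique-remainder lemma for $p\ge 7$, and finite modular checks for small $b$), but your reduction step is genuinely different and cleaner. The paper takes an \emph{abstract} square root $\psi$ of $\zeta$ and must then split on $\psi^{n/2}=\pm 1$, producing \emph{two} polynomial families $U_t$ and $W_t$, each of which undergoes the full cyclotomic analysis (Lemmas~\ref{square_free_2}, \ref{p_2p_2}, \ref{small_ones_2}, \ref{main_cyclotomic_2}). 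Your $u=\zeta^{(n+2)/4}$ is a power of $\zeta$, so in the case $\zeta^{n/2}=1$ one has $u^{2}=\zeta^{n/2}\zeta=\zeta$ and, automatically, $u^{n/2}=\bigl(\zeta^{n/2}\bigr)^{(n+2)/4}=1$: the second branch never occurs. I verified your cancellation; your $S_t$ satisfies $x\,S_t(x)=U_t(x)$, so everything in the paper concerning $W_t$ becomes unnecessary (equivalently: since $n/2$ is odd, of the two square roots $\pm\psi$ exactly one satisfies $\psi^{n/2}=1$, and one may always normalize to that one — the paper does not exploit this). A second saving: since $u$ is an $n$-th root of unity and $n\equiv_4 2$, its order $b$ satisfies $4\nmid b$ from the outset, so the paper's Lemma~\ref{almost_square_free} (the $p=2$ branch with the exceptional values $b\in\{4,8\}$) is not needed at all.

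Two caveats on the parts you deferred. First, the $p\in\{3,5\}$ square-free analysis that you correctly single out as the main obstacle is the bulk of the paper's technical work (the $p=5$ and $p=3$ cases of Lemma~\ref{square_free_2}), and, unlike in the first family, the gcd-style eliminations there do \emph{not} always terminate in an outright contradiction such as $\Phi_b(x)\mid 3$: in several residue classes of $t$ they terminate in $\Phi_b(x)\mid 2x^{10}+x^5+2$ or $\Phi_b(x)\mid 2x^6-x^3+2$, and one must additionally prove that these fixed polynomials have no roots of unity among their roots. The paper does this in Appendix~\ref{problematic_polynomials} by reducing to $2x^2\pm x+2$ and checking the five cyclotomic polynomials of degree at most two; your sketch does not anticipate this extra step, though it is elementary. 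Second, for $t=2$ your $S_t$ has the negative exponent $2t-5=-1$; the cleanest fix is to work with $x\,S_t(x)=U_t(x)$ throughout (all exponents are then non-negative for every $t\ge 2$, which is exactly the paper's normalization), although your proposed direct treatment also works, since
\[
U_2(x)=x^8+2x^7-2x^5+2x^3-2x-1=(x-1)(x+1)^3\left(x^4+1\right),
\]
whose only roots of unity are $\pm 1$ and the primitive eighth roots, all excluded because $u\neq\pm1$ and $4\nmid n$.
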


The complete proof of Theorem \ref{main_theorem_2} will have a very similar structure as the previously described proof of Theorem \ref{main_theorem_1}, albeit with more complexity. Instead of the polynomials $Q_t(x)$ and $R_t(x)$, here we will rely on the integer polynomials $U_t, W_t \in \mathbb{Z}[x]$ that are defined as
\begin{align*}
    U_t(x) &= 2x^{4t-1} + x^{2t+4} - 2x^{2t+1} + 2x^{2t-1} - x^{2t-4} - 2x,\\
    W_t(x) &= 2x^{4t-1} - x^{2t+4} - 2x^{2t+1} + 2x^{2t-1} + x^{2t-4} - 2x,
\end{align*}
for all the $t \in \mathbb{N},\, t \ge 2$. These polynomials are clearly well defined for all the $t \ge 2$ and must have exactly six non-zero terms, due to the fact that
\[
    1 < 2t - 4 < 2t - 1 < 2t + 1 < 2t + 4 < 4t - 1
\]
for each $t \ge 3$, while
\begin{align*}
    4t - 1 &= 7, & 2t + 4 &= 8, & 2t + 1 &= 5,\\
    2t - 1 &= 3, & 2t - 4 &= 0, & 1 &= 1,
\end{align*}
for $t = 2$. By setting
\[
    M_t = \{1, 2t-4, 2t-1, 2t+1, 2t+4, 4t-1 \}
\]
for each $t \ge 2$, we can show that the $M_t$ sets have a property that is very similar to the one regarding the $L_t$ sets which was demonstrated earlier in Lemma \ref{unique_remainder}.

\begin{lemma}\label{unique_remainders_2}
    For each $t \ge 2$ and each prime number $p \ge 7$, the set $M_t$ contains an element whose remainder modulo $p$ is unique within the set.
\end{lemma}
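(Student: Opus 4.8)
The plan is to mirror the argument used for Lemma~\ref{unique_remainder}: isolate four of the six elements of $M_t$ that are guaranteed to have pairwise distinct remainders modulo $p$, and then show that at least two of these four must additionally avoid the remainders of the two leftover elements, so that their remainders are unique within $M_t$.

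First I would single out the four \emph{central} elements $2t-4,\, 2t-1,\, 2t+1,\, 2t+4$, which can be written symmetrically as $2t \pm 1$ and $2t \pm 4$. Their pairwise differences, in absolute value, are exactly $2, 3, 5, 8$, and these do not depend on $t$ at all. For any prime $p \ge 7$, none of these differences is divisible by $p$: when $p \ge 11$ each difference is strictly smaller than $p$, and when $p = 7$ the only difference reaching the size of $p$ is $8 \equiv 1 \pmod 7$, which is still nonzero. Hence the four central integers have mutually distinct remainders modulo $p$, for every $t \ge 2$.

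Next, the two remaining elements of $M_t$ are $1$ and $4t-1$, which together occupy at most two distinct residues modulo $p$. Since the four central elements already realize four distinct residues, at most two of them can share a residue with $1$ or with $4t-1$. Consequently at least $4 - 2 = 2$ of the central elements have a residue that differs from that of each of their three central companions (by distinctness) and from those of both $1$ and $4t-1$. Any such element then has a remainder modulo $p$ that is unique within $M_t$, which is precisely the assertion of the lemma.

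The only genuinely new ingredient compared with Lemma~\ref{unique_remainder} is the verification that the four central elements remain distinct modulo $p$; there the analogous four integers were consecutive, so distinctness held automatically for $p \ge 5$, whereas here the spacing $2t \pm 1,\, 2t \pm 4$ produces the difference $5$. This is exactly why the hypothesis must be strengthened to $p \ge 7$: at $p = 5$ one has $2t+1 \equiv 2t-4$ and $2t+4 \equiv 2t-1 \pmod 5$, collapsing the four central residues and destroying the distinctness on which the whole argument rests. I therefore expect the small case analysis over the difference set $\{2, 3, 5, 8\}$ to be the sole delicate point, and it is entirely elementary.
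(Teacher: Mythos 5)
Your proof is correct and takes essentially the same approach as the paper: the paper likewise observes that the four integers $2t-4$, $2t-1$, $2t+1$, $2t+4$ have mutually distinct remainders modulo $p$ and then concludes, by the same pigeonhole count against the two leftover elements $1$ and $4t-1$, that at least two central elements have a remainder unique within $M_t$. The only difference is that you make explicit the check of the difference set $\{2,3,5,8\}$ against primes $p \ge 7$, which the paper leaves implicit.
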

\begin{proof}
    It is easy to notice that the four integers $2t-4, 2t-1, 2t+1, 2t+4$ necessarily have mutually distinct remainders modulo $p$. This quickly implies that at least two of them must have a remainder modulo $p$ that is different from the remainders of both $1$ and $4t-1$.
\end{proof}

We will now implement Lemma \ref{unique_remainders_2} in order to prove a valuable property of the $U_t(x)$ and $W_t(x)$ polynomials regarding their division by cyclotomic polynomials, as demonstrated in the following lemma.

\begin{lemma}\label{square_free_2}
    For any $t \ge 2$, neither $U_t(x)$ nor $W_t(x)$ can be divisible by a cyclotomic polynomial $\Phi_b(x)$ such that $p^2 \mid b$ for some prime number $p \ge 3$.
\end{lemma}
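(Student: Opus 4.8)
The strategy is to transplant the proof of Lemma~\ref{square_free}, with Lemma~\ref{unique_remainders_2} and the exponent set $M_t$ playing the roles of Lemma~\ref{unique_remainder} and $L_t$. Assume $p^2 \mid b$ for a prime $p \ge 3$. Then $\frac{b}{p}$ is a positive integer divisible by $p$, so $\Phi_b(x) = \Phi_{b/p}(x^p)$ and every exponent occurring in $\Phi_b(x)$ is a multiple of $p$. Writing $U_t^{(j)}(x)$ and $W_t^{(j)}(x)$ for the sums of those terms of $U_t(x)$ and $W_t(x)$ whose exponents are congruent to $j$ modulo $p$, the assumption $\Phi_b(x) \mid U_t(x)$ forces $\Phi_b(x) \mid U_t^{(j)}(x)$ for each $j = \overline{0, p-1}$, and likewise for $W_t(x)$. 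The hypothesis $p \ge 3$ conveniently removes the need for any $p = 2$ analysis.

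The case $p \ge 7$ is immediate: Lemma~\ref{unique_remainders_2} provides an exponent in $M_t$ whose residue modulo $p$ is unique within $M_t$, so some component $U_t^{(j)}(x)$ (and the corresponding $W_t^{(j)}(x)$) consists of a single monomial $c\,x^a$ with $c \ne 0$, which no cyclotomic polynomial can divide.

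The heart of the argument is the pair of small primes $p = 5$ and $p = 3$, which I would treat just as the $p = 3$ portion of Lemma~\ref{square_free}: tabulate the residues modulo $p$ of the six exponents of $M_t$, split into subcases according to $t \bmod p$, and record which terms merge into a common class. Two observations dispose of the generic subcases. In each of them one can find a class containing only two of the ``middle'' exponents $2t-4,\,2t-1,\,2t+1,\,2t+4$; since these differ in pairs by exactly $p$, after factoring out the common power of $x$ the class becomes a binomial $c_1 x^{p} + c_2$. If $|c_1| = |c_2|$, this gives $\Phi_b(x) \mid x^{p} \pm 1$, which is impossible because $b \ge p^2$ cannot divide $p$ or $2p$. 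If $|c_1| \ne |c_2|$, the binomial has all its roots of modulus different from $1$, none of which is a root of unity, so the cyclotomic polynomial $\Phi_b(x)$ cannot divide it.

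The main obstacle is the small number of ``evenly split'' subcases---for $p = 5$ these are $t \equiv 0, 1 \pmod 5$ and for $p = 3$ they are $t \equiv 0, 1 \pmod 3$---in which the six terms fall into two classes of three, so that neither a monomial nor a convenient binomial is available. For these I would pass to $y = x^{p}$ and isolate an extreme term of the resulting trinomial $\alpha y^{m} + \beta y^{\ell} + \gamma$; taking absolute values of the identity obtained by positing a root of unity $y = e^{i\theta}$ of modulus one, every such subcase collapses to the requirement $\cos\theta = \pm\tfrac{1}{4}$. This is impossible, because $y + y^{-1} = 2\cos\theta$ is a real algebraic integer whenever $y$ is a root of unity, whereas $\pm\tfrac{1}{2}$ is a non-integer rational. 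Finally, since only finitely many polynomials $U_t^{\bmod b}(x)$ and $W_t^{\bmod b}(x)$ arise as $t$ ranges over its residues modulo $b$, each of these finitely many subcases could instead be dispatched by direct symbolic computation, exactly as in Lemma~\ref{small_ones}.
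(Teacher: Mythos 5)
Your proposal is correct, and up to the decisive point it coincides with the paper's proof: the factorization $\Phi_b(x) = \Phi_{b/p}(x^p)$, the residue-class decomposition forcing $\Phi_b(x) \mid U_t^{(j)}(x)$ for every $j$, the case $p \ge 7$ via Lemma~\ref{unique_remainders_2}, and the residue tables for $p = 5$ and $p = 3$ with subcases on $t \bmod p$. Where you genuinely diverge is in the non-singleton subcases. In the binomial subcases ($t \equiv_5 3$ and $t \equiv_3 2$) the paper combines the two middle-pair binomials to force $\Phi_b(x) \mid 3$; you use a single binomial $c_1 x^p + c_2$ with $|c_1| \ne |c_2|$, whose roots all have modulus $|c_2/c_1|^{1/p} \ne 1$ --- simpler, and your $|c_1| = |c_2|$ contingency in fact never occurs, since the middle-pair coefficients of $U_t$ and $W_t$ always have magnitudes $\{2,1\}$. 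In the evenly split subcases ($t \equiv_5 0,1$ and $t \equiv_3 0,1$) the paper carries out explicit polynomial combinations to reduce each trinomial class to one of the fixed polynomials $2x^{10} \pm x^5 + 2$, $2x^6 \pm x^3 + 2$, and then invokes Appendix~\ref{problematic_polynomials}, which reduces these to $2x^2 \pm x + 2$ and checks the five cyclotomic polynomials of degree at most two. Your argument --- each class is a trinomial in $y = x^p$ with coefficient magnitudes $\{2,2,1\}$; isolate an extreme term of coefficient $\pm 2$, take moduli on the unit circle to get $5 \pm 4\cos(k\theta) = 4$, hence a root of unity $z = y^k$ with $z + z^{-1} = \pm\tfrac12$, contradicting the fact that $z + z^{-1}$ is an algebraic integer --- reaches the same underlying fact in one stroke, uniformly in $t$, with no appendix computation. (Indeed the roots of the paper's $2x^2 \pm x + 2$ lie on the unit circle with real part $\mp\tfrac14$; your algebraic-integrality observation is a conceptual replacement for the paper's divisibility check.)

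Two caveats, neither fatal. First, a small imprecision: the cosine you constrain is $\cos(k\theta)$ for the relevant exponent gap $k$, not $\cos\theta$ itself; this is harmless because $y^k$ is again a root of unity. Second, your closing fallback remark is not valid as stated: the analogy with Lemma~\ref{small_ones} fails because there $b$ ranges over the fixed finite set $\{3,5,6,10,15,30\}$, whereas in the present lemma $b$ ranges over \emph{all} integers divisible by $p^2$, an infinite family, so no finite symbolic computation of the polynomials $U_t^{\bmod b}(x)$, $W_t^{\bmod b}(x)$ can cover all required $b$. Since your trinomial argument is complete on its own, this side remark can simply be deleted.
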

\begin{proof}
    This proof will be done in a completely analogous manner as that of Lemma~\ref{square_free}. In case $b \in \mathbb{N}$ and $p^2 \mid b$ for some prime number $p$, we will again use the fact that $\Phi_b(x) = \Phi_{\frac{b}{p}} (x^p)$.

    Suppose that $\Phi_b(x) \mid U_t(x)$ for some $t \ge 2$ and $b \in \mathbb{N}$ such that $p^2 \mid b$ for some prime number $p \ge 3$. Let $U_t^{(j)}(x)$ denote the polynomial composed of all the terms of $U_t(x)$ whose powers are congruent to $j$ modulo $p$, for each $j = \overline{0, p-1}$. The same logic used in the proof of Lemma \ref{square_free} allows us to conclude that
    \[
        \Phi_b(x) \mid U_t^{(j)}(x)
    \]
    must hold for all the $j = \overline{0, p-1}$. It is clear that the same notation and implication can be used regarding the $W_t(x)$ polynomial in case $\Phi_b(x) \mid W_t(x)$ is true. We will now finalize the proof of the lemma by taking into consideration three separate cases depending on the value of the prime number $p$.

    \bigskip\noindent
    \emph{Case $p \ge 7$}.\quad
    In this scenario, Lemma \ref{unique_remainders_2} tells us that the set $M_t$ must contain an element whose remainder modulo $p$ is unique within that set. Hence, this case can be proved completely analogously to the case $p \ge 5$ from the proof of Lemma \ref{square_free}.

    \bigskip\noindent
    \emph{Case $p = 5$}.\quad
    If $t \equiv_5 2$ or $t \equiv_5 4$, then there exists an element of the set $M_t$ whose remainder modulo $5$ is unique with that set, as shown in Table \ref{remainders_5}. In this subcase, a contradiction can easily be obtained if we suppose that either $\Phi_b(x) \mid U_t(x)$ or $\Phi_b(x) \mid W_t(x)$, by using the same logic as in the $p \ge 7$ case. We resolve the remaining three subcases separately.
    
    \begin{table}[h!t]
    {\footnotesize
    \begin{center}
    \begin{tabular}{rccccc}
    \toprule & $t \equiv_5 0$ & $t \equiv_5 1$ & $t \equiv_5 2$ & $t \equiv_5 3$ & $t \equiv_5 4$ \\
    \midrule
    $1 \bmod 5$ & $1$ & $1$ & $1$ & $1$ & $1$\\
    $(2t-4) \bmod 5$ & $1$ & $3$ & $0$ & $2$ & $4$\\
    $(2t-1) \bmod 5$ & $4$ & $1$ & $3$ & $0$ & $2$\\
    $(2t+1) \bmod 5$ & $1$ & $3$ & $0$ & $2$ & $4$\\
    $(2t+4) \bmod 5$ & $4$ & $1$ & $3$ & $0$ & $2$\\
    $(4t-1) \bmod 5$ & $4$ & $3$ & $2$ & $1$ & $0$\\
    \bottomrule
    \end{tabular}
    \end{center}
    \caption{The elements of the set $M_t$ modulo $5$.}
    \label{remainders_5}
    }
    \end{table}

    \medskip\noindent
    \emph{Subcase $t \equiv_5 0$}.\quad
    If we suppose that $\Phi_b \mid U_t(x)$, then we get
    \begin{align*}
        \Phi_b(x) &\mid -2x^{2t+1} - x^{2t-4} - 2x,\\
        \Phi_b(x) &\mid 2x^{4t-1} + x^{2t+4} + 2x^{2t-1} .
    \end{align*}
    Since $t \ge 5$, we quickly obtain
    \begin{alignat}{2}
        \nonumber && \Phi_b(x) &\mid -2x^{2t+1} - x^{2t-4} - 2x\\
        \nonumber \implies \quad && \Phi_b(x) &\mid -x(2x^{2t} + x^{2t-5} + 2)\\
        \label{aux_9}\implies \quad && \Phi_b(x) &\mid 2x^{2t} + x^{2t-5} + 2 ,
    \end{alignat}
    as well as
    \begin{alignat}{2}
        \nonumber && \Phi_b(x) &\mid 2x^{4t-1} + x^{2t+4} + 2x^{2t-1}\\
        \nonumber \implies \quad && \Phi_b(x) &\mid x^{2t-1}(2x^{2t} + x^5 + 2)\\
        \label{aux_10}\implies \quad && \Phi_b(x) &\mid 2x^{2t} + x^5 + 2 .
    \end{alignat}
    Furthermore, if we subtract the right-hand sides of Eqs.\ (\ref{aux_9}) and (\ref{aux_10}), this leads us to
    \begin{alignat*}{2}
        && \Phi_b(x) &\mid (2x^{2t} + x^{2t-5} + 2) - (2x^{2t} + x^5 + 2)\\
        \implies \quad && \Phi_b(x) &\mid x^{2t-5} - x^5\\
        \implies \quad && \Phi_b(x) &\mid x^5(x^{2t-10} - 1)\\
        \implies \quad && \Phi_b(x) &\mid x^{2t-10} - 1 .
    \end{alignat*}
    Now, Eq.\ (\ref{aux_10}) helps us obtain
    \begin{alignat*}{2}
        && \Phi_b(x) &\mid 2x^{2t} + x^5 + 2\\
        \implies \quad && \Phi_b(x) &\mid 2x^{2t} + x^5 + 2 - 2x^{10}(x^{2t-10} - 1)\\
        \implies \quad && \Phi_b(x) &\mid 2x^{10} + x^5 + 2 .
    \end{alignat*}
    However, none of the roots of the polynomial $2x^{10} + x^5 + 2$ are actually roots of unity, as demonstrated in Appendix \ref{problematic_polynomials}, hence we obtain a contradiction.

    Similarly, if we suppose that $\Phi_b(x) \mid W_t(x)$, then we get
    \begin{align*}
        \Phi_b(x) &\mid -2x^{2t+1} + x^{2t-4} - 2x,\\
        \Phi_b(x) &\mid 2x^{4t-1} - x^{2t+4} + 2x^{2t-1} .
    \end{align*}
    Due to $t \ge 5$, we have
    \begin{alignat}{2}
        \nonumber && \Phi_b(x) &\mid -2x^{2t+1} + x^{2t-4} - 2x\\
        \nonumber \implies \quad && \Phi_b(x) &\mid -x(2x^{2t} - x^{2t-5} + 2)\\
        \label{aux_11}\implies \quad && \Phi_b(x) &\mid 2x^{2t} - x^{2t-5} + 2
    \end{alignat}
    and
    \begin{alignat}{2}
        \nonumber && \Phi_b(x) &\mid 2x^{4t-1} - x^{2t+4} + 2x^{2t-1}\\
        \nonumber \implies \quad && \Phi_b(x) &\mid x^{2t-1}(2x^{2t} - x^5 + 2)\\
        \label{aux_12}\implies \quad && \Phi_b(x) &\mid 2x^{2t} - x^5 + 2 .
    \end{alignat}
    By subtracting the right-hand sides of Eqs.\ (\ref{aux_11}) and (\ref{aux_12}), it follows that
    \begin{alignat*}{2}
        && \Phi_b(x) &\mid (2x^{2t} - x^{2t-5} + 2) - (2x^{2t} - x^5 + 2)\\
        \implies \quad && \Phi_b(x) &\mid - x^{2t-5} + x^5\\
        \implies \quad && \Phi_b(x) &\mid -x^5(x^{2t-10} - 1)\\
        \implies \quad && \Phi_b(x) &\mid x^{2t-10} - 1 .
    \end{alignat*}
    Hence, Eq.\ (\ref{aux_12}) is now able to give us
    \begin{alignat*}{2}
        && \Phi_b(x) &\mid 2x^{2t} - x^5 + 2\\
        \implies \quad && \Phi_b(x) &\mid 2x^{2t} - x^5 + 2 - 2x^{10}(x^{2t-10} - 1)\\
        \implies \quad && \Phi_b(x) &\mid 2x^{10} - x^5 + 2 .
    \end{alignat*}
    However, similarly as with $U_t(x)$, none of the roots of $2x^{10} - x^5 + 2$ are roots of unity, as shown in Appendix \ref{problematic_polynomials}, which leads to a contradiction.

    \medskip\noindent
    \emph{Subcase $t \equiv_5 1$}.\quad
    If we suppose that $\Phi_b \mid U_t(x)$, we immediately get
    \begin{align*}
        \Phi_b(x) &\mid x^{2t+4} + 2x^{2t-1} - 2x,\\
        \Phi_b(x) &\mid 2x^{4t-1} - 2x^{2t+1} - x^{2t-4} .
    \end{align*}
    Since $t \ge 6$, it is easy to see that
    \begin{alignat}{2}
        \nonumber && \Phi_b(x) &\mid x^{2t+4} + 2x^{2t-1} - 2x\\
        \nonumber \implies \quad && \Phi_b(x) &\mid x(x^{2t+3} + 2x^{2t-2} - 2)\\
        \label{aux_13}\implies \quad && \Phi_b(x) &\mid x^{2t+3} + 2x^{2t-2} - 2 ,
    \end{alignat}
    as well as
    \begin{alignat}{2}
        \nonumber && \Phi_b(x) &\mid 2x^{4t-1} - 2x^{2t+1} - x^{2t-4}\\
        \nonumber \implies \quad && \Phi_b(x) &\mid x^{2t-4}(2x^{2t+3} - 2x^5 - 1)\\
        \label{aux_14}\implies \quad && \Phi_b(x) &\mid 2x^{2t+3} - 2x^5 - 1 .
    \end{alignat}
    By using Eqs.\ (\ref{aux_13}) and (\ref{aux_14}) together, we further obtain
    \begin{alignat}{2}
        \nonumber && \Phi_b(x) &\mid 2(x^{2t+3} + 2x^{2t-2} - 2) - (2x^{2t+3} - 2x^5 - 1)\\
        \label{aux_15} \implies \quad && \Phi_b(x) &\mid 4x^{2t-2} + 2x^5 - 3.
    \end{alignat}
    Now, by using Eqs.\ (\ref{aux_14}) and (\ref{aux_15}) together, we see that
    \begin{alignat*}{2}
        && \Phi_b(x) &\mid 2(2x^{2t+3} - 2x^5 - 1) - x^5(4x^{2t-2} + 2x^5 - 3)\\
        \implies \quad && \Phi_b(x) &\mid -2x^{10} - x^5 - 2,
    \end{alignat*}
    which is not possible since the polynomial $2x^{10} + x^5 + 2$ has no roots of unity among its roots, as discussed earlier.

    In a similar fashion, if we suppose that $\Phi_b \mid W_t(x)$, this gives us
    \begin{align*}
        \Phi_b(x) &\mid -x^{2t+4} + 2x^{2t-1} - 2x,\\
        \Phi_b(x) &\mid 2x^{4t-1} - 2x^{2t+1} + x^{2t-4} .
    \end{align*}
    Due to $t \ge 6$, we swiftly obtain
    \begin{alignat}{2}
        \nonumber && \Phi_b(x) &\mid -x^{2t+4} + 2x^{2t-1} - 2x\\
        \nonumber \implies \quad && \Phi_b(x) &\mid -x(x^{2t+3} - 2x^{2t-2} + 2)\\
        \label{aux_16}\implies \quad && \Phi_b(x) &\mid x^{2t+3} - 2x^{2t-2} + 2
    \end{alignat}
    and
    \begin{alignat}{2}
        \nonumber && \Phi_b(x) &\mid 2x^{4t-1} - 2x^{2t+1} + x^{2t-4}\\
        \nonumber \implies \quad && \Phi_b(x) &\mid x^{2t-4}(2x^{2t+3} - 2x^5 + 1)\\
        \label{aux_17}\implies \quad && \Phi_b(x) &\mid 2x^{2t+3} - 2x^5 + 1 .
    \end{alignat}
    If we use Eqs.\ (\ref{aux_16}) and (\ref{aux_17}) together, we conclude that
    \begin{alignat}{2}
        \nonumber && \Phi_b(x) &\mid (2x^{2t+3} - 2x^5 + 1 ) - 2(x^{2t+3} - 2x^{2t-2} + 2)\\
        \label{aux_18} \implies \quad && \Phi_b(x) &\mid 4x^{2t-2} - 2x^5 - 3.
    \end{alignat}
    Furthermore, by using Eqs.\ (\ref{aux_17}) and (\ref{aux_18}) together, we get
    \begin{alignat*}{2}
        && \Phi_b(x) &\mid x^5(4x^{2t-2} - 2x^5 - 3) - 2(2x^{2t+3} - 2x^5 + 1)\\
        \implies \quad && \Phi_b(x) &\mid -2x^{10} + x^5 - 2,
    \end{alignat*}
    which is impossible given the fact that the polynomial $2x^{10} - x^5 + 2$ has no roots of unity among its roots, as we have already discussed.

    \medskip\noindent
    \emph{Subcase $t \equiv_5 3$}.\quad
    If we suppose that $\Phi_b(x) \mid U_t(x)$, we obtain
    \begin{align*}
        \Phi_b(x) &\mid x^{2t+4} + 2x^{2t-1},\\
        \Phi_b(x) &\mid -2x^{2t+1} - x^{2t-4}.
    \end{align*}
    Since $t \ge 3$, this directly gives us
    \begin{alignat}{2}
        \nonumber && \Phi_b(x) &\mid x^{2t+4} + 2x^{2t-1}\\
        \nonumber \implies \quad && \Phi_b(x) &\mid x^{2t-1}(x^5 + 2)\\
        \label{aux_19}\implies \quad && \Phi_b(x) &\mid x^5 + 2
    \end{alignat}
    and
    \begin{alignat}{2}
        \nonumber && \Phi_b(x) &\mid -2x^{2t+1} - x^{2t-4}\\
        \nonumber \implies \quad && \Phi_b(x) &\mid -x^{2t-4}(2x^5 + 1)\\
        \label{aux_20}\implies \quad && \Phi_b(x) &\mid 2x^5 + 1 .
    \end{alignat}
    By combining Eqs.\ (\ref{aux_19}) and (\ref{aux_20}), we reach
    \begin{alignat*}{2}
        && \Phi_b(x) &\mid 2(x^5 + 2) - (2x^5 + 1)\\
        \implies \quad && \Phi_b(x) &\mid 3,
    \end{alignat*}
    thus yielding a contradiction.

    On the other hand, if we suppose that $\Phi_b(x) \mid W_t(x)$, this gives us
    \begin{align*}
        \Phi_b(x) &\mid -x^{2t+4} + 2x^{2t-1},\\
        \Phi_b(x) &\mid -2x^{2t+1} + x^{2t-4},
    \end{align*}
    which further implies
    \begin{alignat}{2}
        \nonumber && \Phi_b(x) &\mid -x^{2t+4} + 2x^{2t-1}\\
        \nonumber \implies \quad && \Phi_b(x) &\mid -x^{2t-1}(x^5 - 2)\\
        \label{aux_21}\implies \quad && \Phi_b(x) &\mid x^5 - 2 ,
    \end{alignat}
    as well as
    \begin{alignat}{2}
        \nonumber && \Phi_b(x) &\mid -2x^{2t+1} + x^{2t-4}\\
        \nonumber \implies \quad && \Phi_b(x) &\mid -x^{2t-4}(2x^5 - 1)\\
        \label{aux_22}\implies \quad && \Phi_b(x) &\mid 2x^5 - 1 .
    \end{alignat}
    By combining Eqs.\ (\ref{aux_21}) and (\ref{aux_22}), we conclude that
    \begin{alignat*}{2}
        && \Phi_b(x) &\mid 2(x^5 - 2) - (2x^5 - 1)\\
        \implies \quad && \Phi_b(x) &\mid -3,
    \end{alignat*}
    which is not possible.

    \bigskip\noindent
    \emph{Case $p = 3$}.\quad
    This case can be resolved by taking into consideration the modular values given in Table \ref{remainders_3}. Thus, we divide the case into three subcases depending on the value of $t \bmod 3$.
    
    \begin{table}[h!t]
    {\footnotesize
    \begin{center}
    \begin{tabular}{rccc}
    \toprule & $t \equiv_3 0$ & $t \equiv_3 1$ & $t \equiv_3 2$ \\
    \midrule
    $1 \bmod 3$ & $1$ & $1$ & $1$\\
    $(2t-4) \bmod 3$ & $2$ & $1$ & $0$\\
    $(2t-1) \bmod 3$ & $2$ & $1$ & $0$\\
    $(2t+1) \bmod 3$ & $1$ & $0$ & $2$\\
    $(2t+4) \bmod 3$ & $1$ & $0$ & $2$\\
    $(4t-1) \bmod 3$ & $2$ & $0$ & $1$\\
    \bottomrule
    \end{tabular}
    \end{center}
    \caption{The elements of the set $M_t$ modulo $3$.}
    \label{remainders_3}
    }
    \end{table}

    \medskip\noindent
    \emph{Subcase $t \equiv_3 0$}.\quad
    In this subcase, if we suppose that $\Phi_b(x) \mid U_t(x)$, we then have
    \begin{align*}
        \Phi_b(x) &\mid x^{2t+4} - 2x^{2t+1} - 2x,\\
        \Phi_b(x) &\mid 2x^{4t-1} + 2x^{2t-1} - x^{2t-4},
    \end{align*}
    which immediately gives
    \begin{alignat}{2}
        \nonumber && \Phi_b(x) &\mid x^{2t+4} - 2x^{2t+1} - 2x\\
        \nonumber \implies \quad && \Phi_b(x) &\mid x(x^{2t+3} - 2x^{2t} - 2)\\
        \label{aux_23}\implies \quad && \Phi_b(x) &\mid x^{2t+3} - 2x^{2t} - 2
    \end{alignat}
    and
    \begin{alignat}{2}
        \nonumber && \Phi_b(x) &\mid 2x^{4t-1} + 2x^{2t-1} - x^{2t-4}\\
        \nonumber \implies \quad && \Phi_b(x) &\mid x^{2t-4}(2x^{2t+3} + 2x^3 - 1)\\
        \label{aux_24}\implies \quad && \Phi_b(x) &\mid 2x^{2t+3} + 2x^3 - 1 .
    \end{alignat}
    By using Eqs.\ (\ref{aux_23}) and (\ref{aux_24}) together, we obtain
    \begin{alignat}{2}
        \nonumber && \Phi_b(x) &\mid (2x^{2t+3} + 2x^3 - 1) - 2(x^{2t+3} - 2x^{2t} - 2)\\
        \label{aux_25} \implies \quad && \Phi_b(x) &\mid 4x^{2t} + 2x^3 + 3 .
    \end{alignat}
    Now, by using Eqs.\ (\ref{aux_24}) and (\ref{aux_25}), we get
    \begin{alignat*}{2}
        && \Phi_b(x) &\mid 2(2x^{2t+3} + 2x^3 - 1) - x^3(4x^{2t} + 2x^3 + 3)\\
        \implies \quad && \Phi_b(x) &\mid -2x^6 + x^3 - 2 .
    \end{alignat*}
    However, none of the roots of the polynomial $2x^6 - x^3 + 2$ are actually roots of unity, as shown in Apprendix \ref{problematic_polynomials}, which leads us to a contradiction.

    Similarly, if we suppose that $\Phi_b(x) \mid W_t(x)$, we obtain
    \begin{align*}
        \Phi_b(x) &\mid -x^{2t+4} - 2x^{2t+1} - 2x,\\
        \Phi_b(x) &\mid 2x^{4t-1} + 2x^{2t-1} + x^{2t-4},
    \end{align*}
    which quickly leads us to
    \begin{alignat}{2}
        \nonumber && \Phi_b(x) &\mid -x^{2t+4} - 2x^{2t+1} - 2x\\
        \nonumber \implies \quad && \Phi_b(x) &\mid -x(x^{2t+3} + 2x^{2t} + 2)\\
        \label{aux_26}\implies \quad && \Phi_b(x) &\mid x^{2t+3} + 2x^{2t} + 2
    \end{alignat}
    and
    \begin{alignat}{2}
        \nonumber && \Phi_b(x) &\mid 2x^{4t-1} + 2x^{2t-1} + x^{2t-4}\\
        \nonumber \implies \quad && \Phi_b(x) &\mid x^{2t-4}(2x^{2t+3} + 2x^3 + 1)\\
        \label{aux_27}\implies \quad && \Phi_b(x) &\mid 2x^{2t+3} + 2x^3 + 1 .
    \end{alignat}
    If we use Eqs.\ (\ref{aux_26}) and (\ref{aux_27}) together, we conclude that
    \begin{alignat}{2}
        \nonumber && \Phi_b(x) &\mid 2(x^{2t+3} + 2x^{2t} + 2) - (2x^{2t+3} + 2x^3 + 1)\\
        \label{aux_28} \implies \quad && \Phi_b(x) &\mid 4x^{2t} - 2x^3 + 3 .
    \end{alignat}
    Furthermore, by combining Eqs.\ (\ref{aux_27}) and (\ref{aux_28}), we reach
    \begin{alignat*}{2}
        && \Phi_b(x) &\mid 2(2x^{2t+3} + 2x^3 + 1) - x^3(4x^{2t} - 2x^3 + 3)\\
        \implies \quad && \Phi_b(x) &\mid 2x^6 + x^3 + 2 ,
    \end{alignat*}
    which leads to a contradiction, given the fact that the polynomial $2x^6 + x^3 + 2$ has no root which represents a root of unity, as demonstrated in Apprendix \ref{problematic_polynomials}.

    \medskip\noindent
    \emph{Subcase $t \equiv_3 1$}.\quad
    In this scenario, supposing that $\Phi_b(x) \mid U_t(x)$ is true leads to
    \begin{align*}
        \Phi_b(x) &\mid 2x^{2t-1} - x^{2t-4} - 2x,\\
        \Phi_b(x) &\mid 2x^{4t-1} + x^{2t+4} - 2x^{2t+1}.
    \end{align*}
    Furthermore, we swiftly get
    \begin{alignat}{2}
        \nonumber && \Phi_b(x) &\mid 2x^{2t-1} - x^{2t-4} - 2x\\
        \nonumber \implies \quad && \Phi_b(x) &\mid x(2x^{2t-2} - x^{2t-5} - 2)\\
        \label{aux_29}\implies \quad && \Phi_b(x) &\mid 2x^{2t-2} - x^{2t-5} - 2,
    \end{alignat}
    as well as
    \begin{alignat}{2}
        \nonumber && \Phi_b(x) &\mid 2x^{4t-1} + x^{2t+4} - 2x^{2t+1}\\
        \nonumber \implies \quad && \Phi_b(x) &\mid x^{2t+1}(2x^{2t-2} + x^3 - 2)\\
        \label{aux_30}\implies \quad && \Phi_b(x) &\mid 2x^{2t-2} + x^3 - 2 .
    \end{alignat}
    Given the fact that $t \ge 4$, subtracting the right-hand sides of Eqs.\ (\ref{aux_29}) and (\ref{aux_30}) helps us obtain
    \begin{alignat}{2}
        \nonumber && \Phi_b(x) &\mid (2x^{2t-2} + x^3 - 2) - (2x^{2t-2} - x^{2t-5} - 2)\\
        \nonumber \implies \quad && \Phi_b(x) &\mid x^{2t-5} + x^3\\
        \nonumber \implies \quad && \Phi_b(x) &\mid x^3 (x^{2t-8} + 1)\\
        \label{aux_31}\implies \quad && \Phi_b(x) &\mid x^{2t-8} + 1 .
    \end{alignat}
    Now, by combining Eqs.\ (\ref{aux_30}) and (\ref{aux_31}), we conclude that
    \begin{alignat*}{2}
        && \Phi_b(x) &\mid (2x^{2t-2} + x^3 - 2) - 2x^6(x^{2t-8} + 1)\\
        \implies \quad && \Phi_b(x) &\mid -2x^6 + x^3 - 2 ,
    \end{alignat*}
    thus yielding a contradiction due to the fact that the polynomial $2x^6 - x^3 + 2$ has no roots of unity among its roots, as discussed earlier.

    In a similar fashion, supposing that $\Phi_b(x) \mid W_t(x)$ gives us
    \begin{align*}
        \Phi_b(x) &\mid 2x^{2t-1} + x^{2t-4} - 2x,\\
        \Phi_b(x) &\mid 2x^{4t-1} - x^{2t+4} - 2x^{2t+1},
    \end{align*}
    which immediately leads to
    \begin{alignat}{2}
        \nonumber && \Phi_b(x) &\mid 2x^{2t-1} + x^{2t-4} - 2x\\
        \nonumber \implies \quad && \Phi_b(x) &\mid x(2x^{2t-2} + x^{2t-5} - 2)\\
        \label{aux_32}\implies \quad && \Phi_b(x) &\mid 2x^{2t-2} + x^{2t-5} - 2,
    \end{alignat}
    as well as
    \begin{alignat}{2}
        \nonumber && \Phi_b(x) &\mid 2x^{4t-1} - x^{2t+4} - 2x^{2t+1}\\
        \nonumber \implies \quad && \Phi_b(x) &\mid x^{2t+1}(2x^{2t-2} - x^3 - 2)\\
        \label{aux_33}\implies \quad && \Phi_b(x) &\mid 2x^{2t-2} - x^3 - 2 .
    \end{alignat}
    We can now subtract the right-hand sides of Eqs.\ (\ref{aux_32}) and (\ref{aux_33}) and use the fact that $t \ge 4$ in order to obtain
    \begin{alignat}{2}
        \nonumber && \Phi_b(x) &\mid (2x^{2t-2} + x^{2t-5} - 2) - (2x^{2t-2} - x^3 - 2)\\
        \nonumber \implies \quad && \Phi_b(x) &\mid x^{2t-5} + x^3\\
        \nonumber \implies \quad && \Phi_b(x) &\mid x^3 (x^{2t-8} + 1)\\
        \label{aux_34}\implies \quad && \Phi_b(x) &\mid x^{2t-8} + 1 .
    \end{alignat}
    Furthermore, combining Eqs.\ (\ref{aux_33}) and (\ref{aux_34}) gives us
    \begin{alignat*}{2}
        && \Phi_b(x) &\mid (2x^{2t-2} - x^3 - 2) - 2x^6(x^{2t-8} + 1)\\
        \implies \quad && \Phi_b(x) &\mid -2x^6 - x^3 - 2 ,
    \end{alignat*}
    which is impossible since the polynomial $2x^6 + x^3 + 2$ has no roots of unity among its roots, as we have already discussed.

    \medskip\noindent
    \emph{Subcase $t \equiv_3 2$}.\quad
    In this subcase, if we suppose that $\Phi_b(x) \mid U_t(x)$, then we directly obtain
    \begin{align*}
        \Phi_b(x) &\mid x^{2t+4} - 2x^{2t+1},\\
        \Phi_b(x) &\mid 2x^{2t-1} - x^{2t-4},
    \end{align*}
    which leads us to
    \begin{alignat}{2}
        \nonumber && \Phi_b(x) &\mid x^{2t+4} - 2x^{2t+1}\\
        \nonumber \implies \quad && \Phi_b(x) &\mid x^{2t+1}(x^3 - 2)\\
        \label{aux_35}\implies \quad && \Phi_b(x) &\mid x^3 - 2
    \end{alignat}
    and
    \begin{alignat}{2}
        \nonumber && \Phi_b(x) &\mid 2x^{2t-1} - x^{2t-4}\\
        \nonumber \implies \quad && \Phi_b(x) &\mid x^{2t-4}(2x^3 - 1)\\
        \label{aux_36}\implies \quad && \Phi_b(x) &\mid 2x^3 - 1 .
    \end{alignat}
    By using Eqs.\ (\ref{aux_35}) and (\ref{aux_36}) together, we reach
    \begin{alignat*}{2}
        && \Phi_b(x) &\mid (2x^3 - 1) - 2(x^3-2)\\
        \implies \quad && \Phi_b(x) &\mid 3,
    \end{alignat*}
    thus yielding a contradiction.

    Similarly, if we suppose that $\Phi_b(x) \mid W_t(x)$, we obtain
    \begin{align*}
        \Phi_b(x) &\mid -x^{2t+4} - 2x^{2t+1},\\
        \Phi_b(x) &\mid 2x^{2t-1} + x^{2t-4},
    \end{align*}
    which further gives
    \begin{alignat}{2}
        \nonumber && \Phi_b(x) &\mid -x^{2t+4} - 2x^{2t+1}\\
        \nonumber \implies \quad && \Phi_b(x) &\mid -x^{2t+1}(x^3 + 2)\\
        \label{aux_37}\implies \quad && \Phi_b(x) &\mid x^3 + 2,
    \end{alignat}
    as well as
    \begin{alignat}{2}
        \nonumber && \Phi_b(x) &\mid 2x^{2t-1} + x^{2t-4}\\
        \nonumber \implies \quad && \Phi_b(x) &\mid x^{2t-4}(2x^3 + 1)\\
        \label{aux_38}\implies \quad && \Phi_b(x) &\mid 2x^3 + 1 .
    \end{alignat}
    By combining Eqs.\ (\ref{aux_37}) and (\ref{aux_38}), we get
    \begin{alignat*}{2}
        && \Phi_b(x) &\mid (2x^3 + 1) - 2(x^3+2)\\
        \implies \quad && \Phi_b(x) &\mid -3,
    \end{alignat*}
    which is clearly not possible.
\end{proof}

If is clear that the formulation of Lemma \ref{square_free_2} is very similar to that of the previously proven Lemma \ref{square_free} from Section \ref{d1_family}. The key difference is that Lemma \ref{square_free_2} does not cover the case for $p = 2$. In fact, it can be shown that this property does not hold for the $U_t(x)$ and $W_t(x)$ polynomials, i.e.\ it is possible that they are divisible by a cyclotomic polynomial $\Phi_b(x)$ such that $4 \mid b$. However, although the condition $4 \mid b$ can hold, it can be shown that the only way for this to happen is if $b \in \{4, 8\}$, as demonstrated in the following lemma.

\begin{lemma}\label{almost_square_free}
    For any $t \ge 2$, if $U_t(x)$ or $W_t(x)$ are divisible by some cyclotomic polynomial $\Phi_b(x)$ such that $4 \mid b$, then $b = 4$ or $b = 8$.
\end{lemma}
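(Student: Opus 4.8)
The plan is to exploit the parity-splitting technique from the proof of Lemma~\ref{square_free}, now specialized to the prime $p = 2$. Assume $\Phi_b(x) \mid U_t(x)$ with $4 \mid b$ (the argument for $W_t(x)$ is identical). Since $4 \mid b$, the quotient $\tfrac{b}{2}$ is itself even, so the standard identity gives $\Phi_b(x) = \Phi_{b/2}(x^2)$; in particular, every term of $\Phi_b(x)$ has even degree.

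First I would split $U_t(x)$ according to the parity of its exponents. Since $t \ge 2$, the exponents $2t+4$ and $2t-4$ are even, while $4t-1$, $2t+1$, $2t-1$ and $1$ are all odd. Writing $U_t(x) = V(x)\Phi_b(x)$ and decomposing $V(x)$ into its even-degree and odd-degree parts, the fact that $\Phi_b(x)$ contains only even powers forces the even part of $U_t(x)$ to be divisible by $\Phi_b(x)$, exactly as in Lemma~\ref{square_free}. The even part of $U_t(x)$ is
\[
    x^{2t+4} - x^{2t-4} = x^{2t-4}\left(x^8 - 1\right),
\]
and since no root of a cyclotomic polynomial is zero, this yields $\Phi_b(x) \mid x^8 - 1$.

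The decisive step is then immediate: $\Phi_b(x) \mid x^8 - 1$ holds precisely when $b \mid 8$, so $b \in \{1, 2, 4, 8\}$. Combining this with the hypothesis $4 \mid b$ leaves only $b = 4$ or $b = 8$, as claimed. For $W_t(x)$ the same computation applies verbatim: its even part equals $-x^{2t+4} + x^{2t-4} = -x^{2t-4}\left(x^8 - 1\right)$, again giving $\Phi_b(x) \mid x^8 - 1$ and the same conclusion.

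Rather than a genuine obstacle, the only point requiring care is verifying that the even-degree parts of both $U_t(x)$ and $W_t(x)$ collapse to a single monomial times $x^8 - 1$; once this factorization is observed, the restriction $b \mid 8$ follows with no further case analysis, which is why this lemma is considerably shorter than Lemma~\ref{square_free_2}.
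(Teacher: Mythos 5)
Your proof is correct and follows essentially the same route as the paper's: both use the identity $\Phi_b(x) = \Phi_{b/2}(x^2)$ (valid since $4 \mid b$ makes $b/2$ even) to split $U_t(x)$ and $W_t(x)$ by exponent parity, isolate the even part $\pm x^{2t-4}\left(x^8 - 1\right)$, and conclude $\Phi_b(x) \mid x^8 - 1$, hence $b \mid 8$ and $b \in \{4, 8\}$. No gaps; this matches the paper's argument in both structure and detail.
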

\begin{proof}
    Suppose that $4 \mid b$. Given the fact that $2 \mid \frac{b}{2}$, it is clear that $\Phi_b(x) = \Phi_\frac{b}{2}(x^2)$ (see, for example, \cite[p.\ 160]{Nagell}). By using the same logic as in the proofs of Lemmas \ref{square_free} and \ref{square_free_2}, we obtain that $\Phi_b(x) \mid U_t(x)$ would imply
    \[
        \Phi_b(x) \mid U_t^{(j)}(x)
    \]
    for each $j = \overline{0, 1}$, where $U_t^{(j)}(x)$ represents the polynomial composed of the terms of $U_t(x)$ whose powers are congruent to $j$ modulo $2$. The same conclusion and notation can be applied to $W_t(x)$, as well.

    Since all the values $1, 2t-1, 2t+1, 4t-1$ are odd, while $2t + 4$ and $2t -4$ are even, we conclude that $\Phi_b \mid U_t(x)$ would immediately imply
    \[
        \Phi_b \mid x^{2t+4} - x^{2t-4} ,
    \]
    while $\Phi_b \mid W_t(x)$ would give
    \[
        \Phi_b \mid -x^{2t+4} + x^{2t-4} .
    \]
    Either way, we would reach
    \begin{alignat*}{2}
        && \Phi_b &\mid x^{2t+4} - x^{2t-4}\\
        \implies \quad && \Phi_b &\mid x^{2t-4}(x^8 - 1)\\
        \implies \quad && \Phi_b &\mid x^8 - 1.
    \end{alignat*}
    This practically means that each primitive $b$-th root of unity must be an eighth root of unity, i.e.\ $b \mid 8$. This is only possible if $b = 4$ or $b = 8$.
\end{proof}

A direct consequence of Lemmas \ref{square_free_2} and \ref{almost_square_free} is that if the cyclotomic polynomial $\Phi_b(x)$ divides $U_t(x)$ or $W_t(x)$, then the integer $b \in \mathbb{N}$ needs to either be square-free, or be equal to $4$ or $8$. In fact, for any $t \ge 2$, the only cyclotomic polynomials that could divide $U_t(x)$ or $W_t(x)$ are actually $\Phi_1(x), \Phi_2(x), \Phi_4(x), \Phi_8(x)$. This observation forms the central part of the proof of Theorem \ref{main_theorem_2}. However, in order to prove this claim, we will be in need of two more auxiliary lemmas, which hold a great resemblance to the previously proven Lemmas \ref{p_2p} and \ref{small_ones} from Section \ref{d1_family}.

\begin{lemma}\label{p_2p_2}
    For each $t \ge 2$ and each prime number $p \ge 7$, neither $U_t(x)$ nor $W_t(x)$ can be divisible by the cyclotomic polynomial $\Phi_p(x)$ or the cyclotomic polynomial $\Phi_{2p}(x)$.
\end{lemma}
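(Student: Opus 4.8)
The plan is to follow the proof of Lemma~\ref{p_2p} almost verbatim, substituting the polynomials $U_t(x)$, $W_t(x)$, the set $M_t$, and Lemma~\ref{unique_remainders_2} in place of $Q_t(x)$, $R_t(x)$, $L_t$, and Lemma~\ref{unique_remainder}. First I would record that for every prime $p \ge 7$,
\[
\Phi_p(x) = \sum_{j=0}^{p-1} x^j, \qquad \Phi_{2p}(x) = \sum_{j=0}^{p-1} (-1)^j x^j,
\]
so that both polynomials have degree $p-1$ and exactly $p$ nonzero terms. I would then split the argument into a case for $\Phi_p(x)$ and a case for $\Phi_{2p}(x)$.

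For the case $\Phi_p(x)$, I would introduce the polynomials $U_t^{\bmod p}(x)$ and $W_t^{\bmod p}(x)$ obtained from $U_t(x)$ and $W_t(x)$ by reducing every exponent modulo $p$. Since each primitive $p$-th root of unity $\zeta$ satisfies $\zeta^p = 1$, we have $\Phi_p(x) \mid U_t(x)$ if and only if $\Phi_p(x) \mid U_t^{\bmod p}(x)$, and likewise for $W_t(x)$. As $\deg U_t^{\bmod p} \le p - 1 = \deg \Phi_p$, the divisibility forces either $U_t^{\bmod p}(x) \equiv 0$ or $U_t^{\bmod p}(x) = c\,\Phi_p(x)$ for some $c \in \mathbb{Q} \setminus \{0\}$. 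By Lemma~\ref{unique_remainders_2}, the set $M_t$ contains an element whose remainder modulo $p$ is unique within $M_t$, so the corresponding term of $U_t(x)$ survives the reduction and cannot be cancelled; this rules out $U_t^{\bmod p}(x) \equiv 0$. The remaining option would require $U_t^{\bmod p}(x)$ to have exactly $p \ge 7$ nonzero terms, whereas it clearly has at most six, a contradiction. The same reasoning applies to $W_t(x)$.

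For the case $\Phi_{2p}(x)$, since each primitive $2p$-th root of unity $\zeta$ satisfies $\zeta^p = -1$, I would reduce exponents modulo $p$ while attaching the sign $(-1)^{\lfloor a/p \rfloor}$ to each term $x^a$, producing the polynomials $\hat{U}_t^{\bmod p}(x)$ and $\hat{W}_t^{\bmod p}(x)$. Then $\Phi_{2p}(x) \mid U_t(x)$ if and only if $\Phi_{2p}(x) \mid \hat{U}_t^{\bmod p}(x)$, and analogously for $W_t(x)$. The identical degree-versus-term-count dichotomy applies, with Lemma~\ref{unique_remainders_2} again excluding the zero polynomial and the six-term bound excluding the scalar multiple of $\Phi_{2p}(x)$.

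I do not anticipate a genuine obstacle, since the argument is a near-transcription of Lemma~\ref{p_2p}. The only point demanding care is that the hypothesis $p \ge 7$ serves a double purpose here: it is precisely the threshold under which Lemma~\ref{unique_remainders_2} guarantees a unique remainder within $M_t$, and simultaneously it ensures $p > 6$, so that the six-term ceiling strictly beats the $p$ nonzero terms that the scalar-multiple case would demand.
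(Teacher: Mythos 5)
Your proposal is correct and matches the paper's approach exactly: the paper's own proof of Lemma~\ref{p_2p_2} simply states that the argument of Lemma~\ref{p_2p} carries over verbatim with $U_t(x)$, $W_t(x)$, $M_t$, and Lemma~\ref{unique_remainders_2} in place of $Q_t(x)$, $R_t(x)$, $L_t$, and Lemma~\ref{unique_remainder}, which is precisely the transcription you carried out. Your closing observation about the double role of the hypothesis $p \ge 7$ (enabling Lemma~\ref{unique_remainders_2} and ensuring $p$ exceeds the six-term bound) is accurate and a worthwhile point to make explicit.
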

\begin{proof}
    The proof can be done in an entirely analogous manner as the proof of Lemma \ref{p_2p} from Section \ref{d1_family}, the only difference being that Lemma \ref{unique_remainders_2} is used to conclude that there exists an element of the set $M_t$ that has a unique remainder within that set, instead of Lemma \ref{unique_remainder}. Thus, we choose to leave the proof out.
\end{proof}

\begin{lemma}\label{small_ones_2}
    For each $t \ge 2$ and each positive integer $b \in \{3, 5, 6, 10, 15, 30\}$, the cyclotomic polynomial $\Phi_b(x)$ divides neither $U_t(x)$ nor $W_t(x)$.
\end{lemma}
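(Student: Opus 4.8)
The plan is to mirror, essentially verbatim, the strategy used to prove Lemma~\ref{small_ones}. First I would introduce for each $b \in \{3, 5, 6, 10, 15, 30\}$ the reduced polynomials $U_t^{\bmod b}(x)$ and $W_t^{\bmod b}(x)$ obtained from $U_t(x)$ and $W_t(x)$ by replacing every exponent with its remainder modulo $b$:
\begin{align*}
    U_t^{\bmod b}(x) &= 2x^{(4t-1) \bmod b} + x^{(2t+4) \bmod b} - 2x^{(2t+1) \bmod b} + 2x^{(2t-1) \bmod b} - x^{(2t-4) \bmod b} - 2x,\\
    W_t^{\bmod b}(x) &= 2x^{(4t-1) \bmod b} - x^{(2t+4) \bmod b} - 2x^{(2t+1) \bmod b} + 2x^{(2t-1) \bmod b} + x^{(2t-4) \bmod b} - 2x.
\end{align*}
Since $\Phi_b(x) \mid x^b - 1$, reduction of exponents modulo $b$ does not affect divisibility by $\Phi_b(x)$, so $\Phi_b(x) \mid U_t(x) \iff \Phi_b(x) \mid U_t^{\bmod b}(x)$, and similarly for $W_t(x)$.

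Next I would observe that each of the six exponents $1, 2t-4, 2t-1, 2t+1, 2t+4, 4t-1$, when reduced modulo $b$, is fully determined by the residue $t \bmod b$. Consequently, as $t$ ranges over all integers $t \ge 2$, only finitely many distinct polynomials $U_t^{\bmod b}(x)$ and $W_t^{\bmod b}(x)$ can arise, namely at most one for each of the $b$ possible residues of $t$ modulo $b$. It therefore suffices to verify, for every $b \in \{3, 5, 6, 10, 15, 30\}$ and every residue $t \bmod b$, that neither $U_t^{\bmod b}(x) \bmod \Phi_b(x)$ nor $W_t^{\bmod b}(x) \bmod \Phi_b(x)$ equals the zero polynomial. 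This is a finite check which I would perform with symbolic computation software and record in two tables placed in the appendix; once all tabulated remainders are seen to be nonzero, the lemma follows.

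The verification itself is routine rather than conceptually hard, so the only point demanding genuine care is the finiteness reduction. Unlike Lemma~\ref{small_ones}, where $t$ ranged only over the odd integers $t \ge 3$, here $t$ runs over all integers $t \ge 2$, so the tables must cover \emph{every} residue class modulo $b$ (all three classes modulo $3$, all thirty modulo $30$, and so on), and one must also confirm that any collisions among the reduced exponents do not combine terms into an accidental multiple of $\Phi_b(x)$. Because the direct computation of the remainders handles such cancellations automatically, this is the step I would treat most carefully, but it presents no real obstacle once the tables are produced.
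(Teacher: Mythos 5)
Your proposal is correct and follows essentially the same route as the paper: the paper's proof of this lemma simply invokes ``an absolutely identical mechanism'' to that of Lemma~\ref{small_ones}, namely reducing exponents modulo $b$ (valid since $\Phi_b(x) \mid x^b - 1$), noting that only finitely many reduced polynomials arise as $t \bmod b$ varies, and checking by symbolic computation that none of the remainders modulo $\Phi_b(x)$ vanish, with the results tabulated in Appendices~\ref{appendix_ut} and~\ref{appendix_wt}. Your added remark that the tables must now cover \emph{all} residue classes of $t$ modulo $b$ (not just those arising from odd $t$) is exactly the one adjustment needed, and the paper's tables indeed do so.
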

\begin{proof}
    The proof can be done by using an absolutely identical mechanism as the proof of Lemma \ref{small_ones} from Section \ref{d1_family}. We disclose the necessary computational results in Appendices \ref{appendix_ut} and \ref{appendix_wt}.
\end{proof}

We shall now formulate and prove the central lemma regarding the divisibility of the $U_t(x)$ and $W_t(x)$ polynomials by cyclotomic polynomials. Given the fact that these polynomials also have six non-zero terms, like the $Q_t(x)$ and $R_t(x)$ polynomials from Section \ref{d1_family}, using Theorem \ref{filaseta} becomes very convenient once more. The same prime number cancellation mechanism can be implemented for each prime number $p \ge 7$ that divides $b$ whenever $\Phi_b(x)$ divides either $U_t(x)$ or $W_t(x)$, as it was elaborated in Section \ref{d1_family}. Bearing this in mind, we give the following lemma.

\begin{lemma}\label{main_cyclotomic_2}
    For each $t \ge 2$, neither $U_t(x)$ nor $W_t(x)$ can be divisible by a cyclotomic polynomial $\Phi_b(x)$ such that $b \notin \{1, 2, 4, 8\}$.
\end{lemma}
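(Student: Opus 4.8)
The plan is to mirror the proof of Lemma~\ref{main_cyclotomic}, adapting it to account for the fact that $\Phi_4(x)$ and $\Phi_8(x)$ are now permitted divisors. I would give the argument for $U_t(x)$ only, since the reasoning for $W_t(x)$ is identical. Assume for contradiction that $\Phi_b(x) \mid U_t(x)$ for some $b \notin \{1, 2, 4, 8\}$.

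The first step is to reduce to the case where $b$ is square-free. By Lemma~\ref{square_free_2}, no odd prime $p \ge 3$ can satisfy $p^2 \mid b$. By Lemma~\ref{almost_square_free}, if $4 \mid b$ then necessarily $b \in \{4, 8\}$, which is excluded by hypothesis; hence $4 \nmid b$. Combining these two observations, $b$ must be square-free, and since $b \notin \{1, 2\}$ we also have $b \ge 3$.

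With $b$ square-free and $b \ge 3$ secured, I would follow the case split of Lemma~\ref{main_cyclotomic} essentially verbatim. If $3 \nmid b$ and $5 \nmid b$, then the prime factorisation of $b$ uses only $2$ and primes $\ge 7$, so (as $b \ge 3$) it has at least one prime factor exceeding $5$; repeatedly invoking Theorem~\ref{filaseta} to strip prime factors $\ge 7$ one at a time — legitimate since $U_t(x)$ has $N = 6$ nonzero terms and $p - 2 \ge 5 > N - 2$ for any prime $p \ge 7$ — until exactly one such factor remains yields $\Phi_{b'}(x) \mid U_t(x)$ with $b' \in \{p, 2p\}$ for some prime $p \ge 7$, contradicting Lemma~\ref{p_2p_2}. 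If instead $3 \mid b$ or $5 \mid b$, I would strip all prime factors $\ge 7$ entirely, obtaining $\Phi_{b'}(x) \mid U_t(x)$ where $b'$ is square-free, divisible by $3$ or $5$, and has all prime factors in $\{2, 3, 5\}$, which forces $b' \in \{3, 5, 6, 10, 15, 30\}$ and contradicts Lemma~\ref{small_ones_2}. Either way we reach a contradiction, so no such $b$ exists.

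The genuinely new ingredient — and the only real obstacle — is the opening reduction to square-free $b$: one must combine Lemma~\ref{square_free_2} (ruling out repeated odd primes) with Lemma~\ref{almost_square_free} (confining a repeated factor of $2$ to the already-excluded values $b = 4$ and $b = 8$) to eliminate every non-square-free possibility simultaneously. Once square-freeness is in place, the remainder is a direct transcription of the Filaseta-cancellation argument from Lemma~\ref{main_cyclotomic}, with Lemmas~\ref{p_2p_2} and \ref{small_ones_2} playing the roles there filled by Lemmas~\ref{p_2p} and \ref{small_ones}.
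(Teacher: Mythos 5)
Your proposal is correct and takes essentially the same approach as the paper's own proof: it reduces to square-free $b$ by combining Lemmas~\ref{square_free_2} and \ref{almost_square_free} (using that $b \neq 4, 8$), then splits into the cases $3 \nmid b, 5 \nmid b$ versus $3 \mid b$ or $5 \mid b$, applying Theorem~\ref{filaseta} to strip primes $\ge 7$ and reaching a contradiction via Lemma~\ref{p_2p_2} or Lemma~\ref{small_ones_2}, respectively. The only difference is one of presentation: you write out explicitly the Filaseta-cancellation details that the paper delegates to the analogous cases in the proof of Lemma~\ref{main_cyclotomic}.
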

\begin{proof}
    The proof will only be given for $U_t(x)$, given the fact that the proof regarding $W_t(x)$ is completely analogous. Suppose that $\Phi_b(x) \mid U_t(x)$, for some $b \in \mathbb{N}$ such that $b \notin \{1, 2, 4, 8 \}$. Since $b$ is not equal to $4$ or $8$, Lemmas \ref{square_free_2} and \ref{almost_square_free} tell us that $b$ must be a square-free integer. We now divide the problem into two separate cases, depending on whether $b$ is divisible by either $3$ or $5$, or not.

    \bigskip\noindent
    \emph{Case $3 \nmid b$ and $5 \nmid b$}.\quad
    This case is proved completely analogously to the corresponding case from the proof of Lemma \ref{main_cyclotomic} from Section \ref{d1_family}. The only small difference is that Lemma \ref{p_2p_2} is used instead of Lemma \ref{p_2p}.

    \bigskip\noindent
    \emph{Case $3 \mid b$ or $5 \mid b$}.\quad
    This case is also proved entirely analogously to the corresponding case from the proof of Lemma \ref{main_cyclotomic}. As expected, the only minor difference is that Lemma \ref{small_ones_2} is used instead of Lemma \ref{small_ones}.
\end{proof}

Taking into consideration all the results obtained in the previously disclosed lemmas, we are finally able to complete the proof of Theorem \ref{main_theorem_2}. Thus, we present the rest of the proof.

\bigskip\noindent
\emph{Proof of Theorem \ref{main_theorem_2}}.\quad
The case $t = 1$ is trivial to prove. Here, we simply have $S''_{1, n} = \left\{ \frac{n+2}{4}, \frac{n+6}{4} \right\}$, hence Eq.\ (\ref{polynomial_formula}) directly gives us
\begin{equation}\label{case_t1_2}
    P(\zeta) = \zeta^{\frac{n+2}{4}} + \dfrac{1}{\zeta^{\frac{n+2}{4}}} + \zeta^{\frac{n+6}{4}} + \dfrac{1}{\zeta^{\frac{n+6}{4}}} ,
\end{equation}
where $\zeta$ is an arbitrary $n$-th root of unity different from $1$ and $-1$. The condition $P(\zeta) = 0$ is clearly equivalent to
\begin{alignat*}{2}
    && P(\zeta) &= 0\\
    \iff \quad && \zeta^{\frac{n+6}{4}} \left( \zeta^{\frac{n+2}{4}} + \dfrac{1}{\zeta^{\frac{n+2}{4}}} + \zeta^{\frac{n+6}{4}} + \dfrac{1}{\zeta^{\frac{n+6}{4}}} \right) &=0\\
    \iff \quad && \zeta^{\frac{n}{2} + 3} + \zeta^{\frac{n}{2} + 2} + \zeta + 1 &= 0\\
    \iff \quad && (\zeta + 1)\left(\zeta^{\frac{n}{2} + 2} + 1 \right) &= 0\\
    \iff \quad && \zeta^{\frac{n}{2} + 2} + 1 &= 0 .
\end{alignat*}
Due to the fact that $\zeta^\frac{n}{2} \in \{1, -1\}$, it is obvious that $\zeta^{\frac{n}{2} + 2}$ is equal to either $\zeta^2 + 1$ or $-\zeta^2 + 1$. However, both of these expressions cannot be equal to zero. If $\zeta^2 + 1$ were to equal zero, then we would have that $\zeta \in \{i, -i\}$, which is impossible due to the fact that $n \equiv_4 2$, hence both $i$ and $-i$ are not $n$-th roots of unity. On the other hand, $-\zeta^2 + 1 \neq 0$ purely because $\zeta$ was chosen in such a way that it is distinct from both $1$ and $-1$. Thus, we get that $P(\zeta) \neq 0$ for any $n$-th root of unity different from $1$ and $-1$. By virtue of Lemma \ref{damnjanovic_lemma_1}, we conclude that $\mathcal{D}''_{1, n}$ truly is a circulant nut graph for any $n \ge 10$ such that $n \equiv_4 2$.

In the remainder of the proof, we will suppose that $t \ge 2$. Eq.\ (\ref{polynomial_formula}) now gives us
\[
    P(\zeta) = \left( \zeta^{\frac{n+2}{4}} + \frac{1}{\zeta^{\frac{n+2}{4}}} \right) + \left( \zeta^{\frac{n+6}{4}} + \frac{1}{\zeta^{\frac{n+6}{4}}} \right) + \sum_{j=1}^{t-1}\left( \zeta^j + \frac{1}{\zeta^j}\right) + \sum_{j=\frac{n}{2}-t+1}^{\frac{n}{2}-1}\left( \zeta^j + \frac{1}{\zeta^j}\right),
\]
where $\zeta$ is an arbitrarily chosen $n$-th root of unity different from $1$ and $-1$. It is not difficult to further obtain
\begin{equation}\label{polynomial_formula_4}
    P(\zeta) = \left( \zeta^{\frac{n+6}{4}} + \zeta^{\frac{n+2}{4}} + \frac{1}{\zeta^{\frac{n+2}{4}}} + \frac{1}{\zeta^{\frac{n+6}{4}}} \right) + \sum_{j=1}^{t-1}\left( \zeta^j + \frac{1}{\zeta^j} + \zeta^{\frac{n}{2} - j} + \frac{1}{\zeta^{\frac{n}{2}-j}} \right) .
\end{equation}
We will finalize the proof by demonstrating that $P(\zeta) \neq 0$ must be true. We will divide this proof into two cases in the same way as it was done while proving Theorem \ref{main_theorem_1}.

\bigskip\noindent
\emph{Case $\zeta^{\frac{n}{2}} = -1$}.\quad
In this case, it is straightforward to notice that $\zeta^{\frac{n}{2} - j} = -\dfrac{1}{\zeta^j}$ and $\dfrac{1}{\zeta^{\frac{n}{2} - j}} = -\zeta^j$, which directly leads us to
\[
    \zeta^j + \frac{1}{\zeta^j} + \zeta^{\frac{n}{2} - j} + \frac{1}{\zeta^{\frac{n}{2}-j}} = 0
\]
for any $j = \overline{1, t-1}$. Hence, it is convenient to simplify Eq.\ (\ref{polynomial_formula_4}) in order to get
\[
    P(\zeta) = \zeta^{\frac{n+6}{4}} + \zeta^{\frac{n+2}{4}} + \frac{1}{\zeta^{\frac{n+2}{4}}} + \frac{1}{\zeta^{\frac{n+6}{4}}} .
\]
However, this formula is identical to Eq.\ (\ref{case_t1_2}), which we had to deal with while solving the case $t = 1$. An almost absolutely identical proof can be used in order to show that $P(\zeta) \neq 0$ must hold, as desired, hence we choose to leave it out.

\bigskip\noindent
\emph{Case $\zeta^{\frac{n}{2}} = 1$}.\quad
It is easy to see that $\zeta^{\frac{n}{2} - j} = \dfrac{1}{\zeta^j}$ and $\dfrac{1}{\zeta^{\frac{n}{2} - j}} = \zeta^j$. This implies
\[
    \zeta^j + \frac{1}{\zeta^j} + \zeta^{\frac{n}{2} - j} + \frac{1}{\zeta^{\frac{n}{2}-j}} = 2 \left( \zeta^j + \frac{1}{\zeta^j} \right)
\]
for any $j = \overline{1, t-1}$. According to Eq.\ (\ref{polynomial_formula_4}), the equality $P(\zeta) = 0$ becomes equivalent to
\begin{alignat*}{2}
   && P(\zeta) &= 0\\
    \iff \quad && \left( \zeta^{\frac{n+6}{4}} + \zeta^{\frac{n+2}{4}} + \frac{1}{\zeta^{\frac{n+2}{4}}} + \frac{1}{\zeta^{\frac{n+6}{4}}} \right) + 2 \sum_{j=1}^{t-1}\left( \zeta^j + \frac{1}{\zeta^j} \right) &= 0\\
    \iff \quad && \left( \zeta^{\frac{n+6}{4}} + \zeta^{\frac{n+2}{4}} + \frac{1}{\zeta^{\frac{n+2}{4}}} + \frac{1}{\zeta^{\frac{n+6}{4}}} \right) - 2 + 2 \sum_{j=1-t}^{t-1} \zeta^j &= 0\\
    \iff \quad && \zeta^{t-1} \left( \left( \zeta^{\frac{n+6}{4}} + \zeta^{\frac{n+2}{4}} + \frac{1}{\zeta^{\frac{n+2}{4}}} + \frac{1}{\zeta^{\frac{n+6}{4}}} \right) - 2 + 2 \sum_{j=1-t}^{t-1} \zeta^j \right) &= 0\\
    \iff \quad && \left( \zeta^{t+\frac{n+2}{4}} + \zeta^{t+\frac{n-2}{4}} + \zeta^{t-\frac{n+6}{4}} + \zeta^{t-\frac{n+10}{4}} \right) - 2\zeta^{t-1} + 2 \sum_{j=0}^{2t-2} \zeta^j &= 0\\
    \iff \quad && (\zeta - 1)\left( \zeta^{t+\frac{n+2}{4}} + \zeta^{t+\frac{n-2}{4}} + \zeta^{t-\frac{n+6}{4}} + \zeta^{t-\frac{n+10}{4}} - 2\zeta^{t-1} + 2 \sum_{j=0}^{2t-2} \zeta^j \right) &= 0\\
    \iff \quad && \zeta^{t+\frac{n+6}{4}} - \zeta^{t+\frac{n-2}{4}} + \zeta^{t-\frac{n+2}{4}} - \zeta^{t-\frac{n+10}{4}} - 2\zeta^t + 2\zeta^{t-1} + 2 \zeta^{2t-1} - 2 &= 0\\
    \iff \quad && 2 \zeta^{2t-1} + \zeta^{t+\frac{n+6}{4}} - \zeta^{t+\frac{n-2}{4}} - 2\zeta^t + 2\zeta^{t-1} + \zeta^{t-\frac{n+2}{4}} - \zeta^{t-\frac{n+10}{4}} - 2 &= 0 .
\end{alignat*}

Now, if we define $\psi$ to be one of the two possible square roots of $\zeta$, i.e.\ a complex number such that $\psi^2 = \zeta$, then it is easy to see that the condition $P(\zeta) = 0$ becomes equivalent to
\begin{equation}\label{new_formula_psi}
    2\psi^{4t - 2} + \psi^{2t + 3 + \frac{n}{2}} - \psi^{2t-1+\frac{n}{2}} - 2\psi^{2t} + 2\psi^{2t-2} + \psi^{2t-1-\frac{n}{2}} - \psi^{2t - 5 - \frac{n}{2}} - 2 = 0 .
\end{equation}
Taking into consideration that the condition $\zeta^{\frac{n}{2}} = 1$ translates to $\psi^n = 1$, it becomes clear that in order to prove that $P(\zeta) \neq 0$ for any $n$-th root of unity $\zeta$ such that $\zeta \neq 1, -1$ and $\zeta^{\frac{n}{2}} = 1$, it is enough to show that Eq.\ (\ref{new_formula_psi}) does not hold for any $n$-th root of unity $\psi$ such that $\psi \notin \{1, -1, i, -i \}$. Since $i$ and $-i$ are not $n$-th roots of unity to begin with, due to $n \equiv_4 2$, it suffices to prove that Eq.\ (\ref{new_formula_psi}) has no $n$-th roots of unity among its roots, besides potentially $1$ and $-1$.

We now divide the case into two subcases, depending on whether $\psi^\frac{n}{2}$ is equal to $1$ or $-1$.

\medskip\noindent
\emph{Subcase $\psi^\frac{n}{2} = 1$}.\quad
Here, it is straightforward to deduct that Eq.\ (\ref{new_formula_psi}) is equivalent to
\begin{alignat*}{2}
    && 2\psi^{4t - 2} + \psi^{2t + 3} - \psi^{2t-1} - 2\psi^{2t} + 2\psi^{2t-2} + \psi^{2t-1} - \psi^{2t - 5} - 2 &= 0\\
    \iff \quad && 2\psi^{4t - 2} + \psi^{2t + 3} - 2\psi^{2t} + 2\psi^{2t-2} - \psi^{2t - 5} - 2 &= 0\\
    \iff \quad && 2\psi^{4t - 1} + \psi^{2t + 4} - 2\psi^{2t+1} + 2\psi^{2t-1} - \psi^{2t - 4} - 2\psi &= 0.
\end{alignat*}
Now, suppose that Eq.\ (\ref{new_formula_psi}) does hold for some $n$-th root of unity $\psi$ different from $1$ and $-1$. It is clear that $\psi$ must be a primitive $b$-th root of unity for some $b \neq 1, 2$, since $\psi \neq 1, -1$. Also, due to the fact that $b \mid n$ and $n \equiv_4 2$, we conclude that $b \neq 4, 8$. However, Eq.\ (\ref{new_formula_psi}) directly implies that $\psi$ is a root of the polynomial $U_t(x)$. Hence, we get that $U_t(x)$ is divisible by a cyclotomic polynomial $\Phi_b(x)$ such that $b \neq 1, 2, 4, 8$, which is not possible according to Lemma \ref{main_cyclotomic_2}, thus yielding a contradiction.

\medskip\noindent
\emph{Subcase $\psi^\frac{n}{2} = -1$}.\quad
In this subcase, Eq.\ (\ref{new_formula_psi}) quickly becomes equivalent to
\begin{alignat*}{2}
    && 2\psi^{4t - 2} - \psi^{2t + 3} + \psi^{2t-1} - 2\psi^{2t} + 2\psi^{2t-2} - \psi^{2t-1} + \psi^{2t - 5} - 2 &= 0\\
    \iff \quad && 2\psi^{4t - 2} - \psi^{2t + 3} - 2\psi^{2t} + 2\psi^{2t-2} + \psi^{2t - 5} - 2 &= 0\\
    \iff \quad && 2\psi^{4t - 1} - \psi^{2t + 4} - 2\psi^{2t+1} + 2\psi^{2t-1} + \psi^{2t - 4} - 2\psi &= 0.
\end{alignat*}
If we suppose that Eq.\ (\ref{new_formula_psi}) is true for some $n$-th root of unity $\psi$ different from $1$ and $-1$, we can use the same logic implemented in the previous subcase in order to immediately obtain that $\psi$ has to be a primitive $b$-th root of unity for some $b \notin \{1, 2, 4, 8\}$. However, Eq.\ (\ref{new_formula_psi}) implies that $\psi$ is a root of the polynomial $W_t(x)$, which further means that this polynomial has to be divisible by a cyclotomic polynomial $\Phi_b(x)$ such that $b \notin \{1, 2, 4, 8 \}$. By virtue of Lemma \ref{main_cyclotomic_2}, this is impossible, which completes the proof. \hfill\qed

\section{Conclusion}\label{conclusion}

In conclusion, Theorems \ref{main_theorem_1} and \ref{main_theorem_2} provide a construction for any $4t$-regular circulant nut graph of order $n$, whenever
\begin{itemize}
    \item $t$ is odd and $n \ge 4t + 4$;
    \item $t$ is even and $n \ge 4t + 6$ and such that $n \equiv_4 2$.
\end{itemize}
For the odd values of $t$, these two theorems provide a way to construct a $4t$-regular circulant nut graph of every possible order, thereby fully resolving the circulant nut graph order--degree existence problem. On the other hand, for the even values of $t$, Theorem~\ref{main_cyclotomic_2} shows that there exists a $4t$-regular circulant nut graph of each order $n \ge 4t + 6$ such that $n \equiv_4 2$, without covering the case when $4 \mid n$.

Damnjanovi\'c and Stevanovi\'c \cite[Proposition 19]{Damnjanovic} have shown that an interesting irregularity exists for the case $t=2$. Moreover, there does not exist an $8$-regular circulant nut graph of order $16$. In fact, the set of all the values $n$ such that there exists an $8$-regular circulant nut graph of order $n$ is given by the expression $\{14\} \cup \{18, 20, 22, 24, 26, \ldots \}$.

Despite the irregularity that occurs for $t = 2$, the experimental results obtained in \cite{Damnjanovic} dictate that for each even integer $t$ such that $4 \le t \le 1300$ there does exist a $4t$-regular circulant nut graph of order $n$ for each even $n \ge 4t + 6$. Bearing this mind, we end the paper with the following conjecture.

\begin{conjecture}
    For each even $t \ge 4$ and each $n \ge 4t + 8$ divisible by four, there exists a $4t$-regular circulant nut graph of order $n$.
\end{conjecture}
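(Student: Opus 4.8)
The plan is to follow the blueprint of Theorems~\ref{main_theorem_1} and~\ref{main_theorem_2} as closely as possible: exhibit an explicit family of $4t$-regular circulant graphs $\Circ(n,\mathcal{S}_{t,n})$ for even $t\ge 4$ and $4\mid n$ with $n\ge 4t+8$, and verify that each is a nut graph via Lemma~\ref{damnjanovic_lemma_1}. Guided by the two known constructions, I would search for a generator set of the shape
\[
    \mathcal{S}_{t,n} = \{1, 2, \ldots, t-1\} \cup \{a, b\} \cup \left\{ \tfrac{n}{2}-(t-1), \ldots, \tfrac{n}{2}-1 \right\},
\]
where $a,b$ are two ``middle'' generators near $\tfrac{n}{4}$. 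The first task is purely combinatorial. When $t$ is even and $4\mid n$, the number $\tfrac{n}{2}$ is even, so each mirror generator $\tfrac{n}{2}-j$ has the same parity as $j$, and the two outer blocks together contribute $t$ odd and $t-2$ even generators. To meet the second condition of Lemma~\ref{damnjanovic_lemma_1}, the pair $\{a,b\}$ must therefore consist of two \emph{even} integers. Since the parity of $\tfrac{n}{4}$ depends on $n\bmod 8$, it is likely that two sub-families are needed, one for $n\equiv_8 0$ and one for $n\equiv_8 4$, with $a,b$ chosen accordingly.

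Once a candidate $\mathcal{S}_{t,n}$ is fixed, the reduction should parallel the proof of Theorem~\ref{main_theorem_1}, which already treats $4\mid n$. Using (\ref{polynomial_formula}) I would write $P(\zeta)$ as the middle-block contribution plus $\sum_{j=1}^{t-1}\left(\zeta^j+\zeta^{-j}+\zeta^{n/2-j}+\zeta^{-(n/2-j)}\right)$, and split on whether $\zeta^{n/2}$ equals $1$ or $-1$. In the case $\zeta^{n/2}=-1$ the mirror terms cancel pairwise and $P(\zeta)=0$ reduces to a short relation involving only $a$ and $b$, to be dispatched directly. In the case $\zeta^{n/2}=1$ the summation doubles; multiplying by $\zeta^{t-1}$, factoring out $\zeta-1$, and further splitting the subcases $\zeta^{n/4}=1$ and $\zeta^{n/4}=-1$ (both possible when $\zeta^{n/2}=1$) should produce two associated lacunary polynomials $Q_t^\ast(x),R_t^\ast(x)\in\mathbb{Z}[x]$---the even-$t$ analogues of $Q_t$ and $R_t$---each with only six nonzero terms, so that $P(\zeta)=0$ would force $\zeta$ to be a root of one of them, hence force $\Phi_b(x)$ to divide it, where $b\ge 3$ is the order of $\zeta$.

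The final and most technical stage is to rule out this divisibility. Because each polynomial has only six nonzero terms, Theorem~\ref{filaseta} applies and lets me strip every prime factor $p\ge 7$ from $b$ while preserving divisibility, reducing to $b$ all of whose prime factors lie in $\{2,3,5\}$. I would then prove square-free-type lemmas modelled on Lemmas~\ref{square_free}, \ref{square_free_2} and~\ref{almost_square_free}: a unique-remainder argument in the spirit of Lemmas~\ref{unique_remainder} and~\ref{unique_remainders_2} disposes of the large primes, while $p\in\{3,5\}$ are handled through the finitely many residues of $t$ modulo these primes, and the prime $p=2$ receives the careful separate treatment described below. The remaining finite list of small moduli is then eliminated by symbolic computation, precisely as in Lemmas~\ref{small_ones} and~\ref{small_ones_2}. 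Together these would yield an analogue of Lemma~\ref{main_cyclotomic_2} restricting the admissible $b$ to a short list of powers of two.

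The hard part will be the prime $2$. When $4\mid n$ the primitive fourth roots $\pm i$ are $n$-th roots of unity, and when $8\mid n$ so are the primitive eighth roots; consequently divisibility of $Q_t^\ast(x)$ or $R_t^\ast(x)$ by $\Phi_4(x)$ or $\Phi_8(x)$ would produce a genuine zero of $P$ at an $n$-th root of unity and destroy the nut property. This is exactly the mechanism behind the documented failure at $t=2$, $n=16$, and it is why the conjecture both restricts to even $t$ and demands $n\ge 4t+8$ rather than $n\ge 4t+4$. Unlike in Theorem~\ref{main_theorem_2}, where divisibility by $\Phi_4$ and $\Phi_8$ was harmless precisely because $n\equiv_4 2$ kept $\pm i$ outside the $n$-th roots of unity, here the middle generators $a,b$ must be engineered so that the associated polynomials are genuinely \emph{free} of the factors $\Phi_4(x)$ and $\Phi_8(x)$. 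I expect this to force the two-sub-family split (according to $n\bmod 8$) anticipated above, together with a delicate residue analysis at $p=2$ that also explains why the construction provably breaks at $n=4t+4$. Carrying out that analysis, and confirming that the tuned generators leave no admissible $b$ beyond $1$ and $2$, is where the real difficulty lies.
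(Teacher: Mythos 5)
There is nothing in the paper to compare your argument against: this statement is the paper's closing \emph{conjecture}, left open precisely because the methods of Sections~\ref{d1_family} and~\ref{d2_family} do not reach even $t$ with $4 \mid n$. Your submission does not close it either; it is a plan whose unexecuted steps are exactly the substance of the problem. You never exhibit the pair $\{a,b\}$, never derive the promised polynomials $Q_t^\ast(x)$ and $R_t^\ast(x)$, and you explicitly postpone the analysis at the prime $2$ --- the step you yourself call ``where the real difficulty lies.'' That step is not a routine adaptation of anything in the paper: in Theorem~\ref{main_theorem_1} it is handled by the case $p=2$ of Lemma~\ref{square_free}, which exploits the specific odd/even split of the exponents of $Q_t$ and $R_t$ to force $\Phi_b(x) \mid x^2-1$; in Theorem~\ref{main_theorem_2}, divisibility by $\Phi_4$ and $\Phi_8$ is \emph{not} excluded (Lemma~\ref{almost_square_free} allows it) and is tolerated only because $n \equiv_4 2$ prevents primitive fourth and eighth roots of unity from being $n$-th roots of unity. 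When $4 \mid n$, both of these escapes are closed, and you offer no mechanism to replace them --- you only observe, correctly, that one would be needed.

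Moreover, the one step you declare routine is false for your own candidate generators. Your (correct) parity count forces $a$ and $b$ both to be even, and you propose taking them near $\frac{n}{4}$; when $n \equiv_8 4$ the integer $\frac{n}{4}$ is odd, so the natural choices are consecutive even integers such as $\frac{n}{4}-1$ and $\frac{n}{4}+1$. Now take $\zeta = \omega^{n/4} = i$, which is an $n$-th root of unity because $4 \mid n$, and which satisfies $\zeta^{n/2} = (-1)^{n/4} = -1$ because $n \equiv_8 4$. In this case the outer-block terms cancel in pairs, exactly as you say, leaving $P(i) = i^{a} + i^{-a} + i^{b} + i^{-b} = 2(-1)^{a/2} + 2(-1)^{b/2}$, which vanishes whenever $a \equiv_4 b+2$ --- in particular whenever $a - b = 2$ --- and which vanishes for \emph{every} $\zeta$ with $\zeta^{n/2}=-1$ if one chooses $a + b = \frac{n}{2}$ (e.g.\ the pair $\frac{n}{4} \pm 1$). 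By Lemma~\ref{damnjanovic_lemma_1}, such a graph is not a nut graph. So the case $\zeta^{n/2}=-1$ is not ``dispatched directly''; it is a genuine constraint that kills the most natural candidates on half of the admissible orders, and it shows that the choice of $\{a,b\}$ must be tuned against $n \bmod 8$ before the cyclotomic machinery can even be started. Until a concrete pair is produced that survives this case, and the full divisibility analysis (including the powers of two) is carried out for the resulting six-term polynomials, nothing has been proved: the conjecture is exactly as open after your proposal as it was before it.
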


\appendix

\section{\texorpdfstring{$Q_t^{\bmod b}(x) \bmod{\Phi_b(x)}$}{Qt remainders} table}\label{appendix_qt}

{\scriptsize
\begin{longtable}{lll}
\toprule $t \bmod 3$ & $Q_t^{\bmod 3}(x)$ & $Q_t^{\bmod 3}(x) \bmod{\Phi_3(x)}$ \\
\midrule
$0$ & $-3+3 x^2$ & $-6-3 x$ \\
$1$ & $-1+x$ & $-1+x$ \\
$2$ & $x-x^2$ & $1+2 x$ \\
\midrule $t \bmod 5$ & $Q_t^{\bmod 5}(x)$ & $Q_t^{\bmod 5}(x) \bmod{\Phi_5(x)}$ \\ 
\midrule
$0$ & $-3+x-x^3+3 x^4$ & $-6-2 x-3 x^2-4 x^3$ \\
$1$ & $-1+x+x^2-x^4$ & $2 x+2 x^2+x^3$ \\
$2$ & $-3+x-x^2+3 x^3$ & $-3+x-x^2+3 x^3$ \\
$3$ & $-x+x^2-x^3+x^4$ & $-1-2 x-2 x^3$ \\
$4$ & $-1+x^2+x^3-x^4$ & $x+2 x^2+2 x^3$ \\
\midrule $t \bmod 6$ & $Q_t^{\bmod 6}(x)$ & $Q_t^{\bmod 6}(x) \bmod{\Phi_6(x)}$ \\ 
\midrule
$0$ & $-3+x-x^4+3 x^5$ & $-x$ \\
$1$ & $-1+x+x^2-x^5$ & $-3+3 x$ \\
$2$ & $-3+x-x^2+3 x^3$ & $-5$ \\
$3$ & $-2-x+x^2-x^3+x^4+2 x^5$ & $-3 x$ \\
$4$ & $-2+2 x-x^2+x^3-x^4+x^5$ & $-1+x$ \\
$5$ & $-1+x^3+x^4-x^5$ & $-3$ \\
\midrule $t \bmod 10$ & $Q_t^{\bmod 10}(x)$ & $Q_t^{\bmod 10}(x) \bmod{\Phi_{10}(x)}$ \\
\midrule
$0$ & $-3+x-x^8+3 x^9$ & $-2 x+3 x^2-2 x^3$ \\
$1$ & $-1+x+x^2-x^9$ & $-2+2 x+x^3$ \\
$2$ & $-3+x-x^2+3 x^3$ & $-3+x-x^2+3 x^3$ \\
$3$ & $-2-x+x^2-x^3+x^4+2 x^5$ & $-5$ \\
$4$ & $-2-x^2+x^3-x^4+x^5+2 x^7$ & $-2-x-2 x^2$ \\
$5$ & $-2-x^3+x^4-x^5+x^6+2 x^9$ & $-2 x+x^2-2 x^3$ \\
$6$ & $-2+2 x-x^4+x^5-x^6+x^7$ & $-2+2 x-x^3$ \\
$7$ & $-2+2 x^3-x^5+x^6-x^7+x^8$ & $-1-x+x^2+x^3$ \\
$8$ & $-2+2 x^5-x^6+x^7-x^8+x^9$ & $-3$ \\
$9$ & $-1+x^7+x^8-x^9$ & $-2+x-2 x^2$ \\
\midrule $t \bmod 15$ & $Q_t^{\bmod 15}(x)$ & $Q_t^{\bmod 15}(x) \bmod{\Phi_{15}(x)}$ \\ 
\midrule
$0$ & $-3+x-x^{13}+3 x^{14}$ & $-1+2 x-3 x^2+3 x^3-2 x^4-x^5+3 x^6-2 x^7$ \\
$1$ & $-1+x+x^2-x^{14}$ & $-2+x+2 x^2-x^3+x^4-x^6+x^7$ \\
$2$ & $-3+x-x^2+3 x^3$ & $-3+x-x^2+3 x^3$ \\
$3$ & $-2-x+x^2-x^3+x^4+2 x^5$ & $-2-x+x^2-x^3+x^4+2 x^5$ \\
$4$ & $-2-x^2+x^3-x^4+x^5+2 x^7$ & $-2-x^2+x^3-x^4+x^5+2 x^7$ \\
$5$ & $-2-x^3+x^4-x^5+x^6+2 x^9$ & $-4+2 x^2-3 x^3+x^4-x^5-x^6+2 x^7$ \\
$6$ & $-2-x^4+x^5-x^6+x^7+2 x^{11}$ & $-2-2 x-x^4+x^5-3 x^6+x^7$ \\
$7$ & $-2-x^5+x^6-x^7+x^8+2 x^{13}$ & $-1-x-x^3-x^4+x^6-2 x^7$ \\
$8$ & $-x^6+x^7-x^8+x^9$ & $-x+x^2-x^4+x^5-2 x^6+x^7$ \\
$9$ & $-2+2 x^2-x^7+x^8-x^9+x^{10}$ & $-3+x+x^2+x^4-2 x^5+x^6-x^7$ \\
$10$ & $-2+2 x^4-x^8+x^9-x^{10}+x^{11}$ & $-1-2 x+x^2+x^4+2 x^5-2 x^6$ \\
$11$ & $-2+2 x^6-x^9+x^{10}-x^{11}+x^{12}$ & $-2+x-2 x^2+x^3-x^5+4 x^6-2 x^7$ \\
$12$ & $-2+2 x^8-x^{10}+x^{11}-x^{12}+x^{13}$ & $-2+x^2-2 x^3+x^4-x^6+2 x^7$ \\
$13$ & $-2+2 x^{10}-x^{11}+x^{12}-x^{13}+x^{14}$ & $-4+2 x-2 x^2+x^3-3 x^5+2 x^6-x^7$ \\
$14$ & $-1+x^{12}+x^{13}-x^{14}$ & $-1-x-x^3+x^5-x^6-x^7$ \\
\midrule $t \bmod 30$ & $Q_t^{\bmod 30}(x)$ & $Q_t^{\bmod 30}(x) \bmod{\Phi_{30}(x)}$ \\ 
\midrule
$0$ & $-3+x-x^{28}+3 x^{29}$ & $-7+3 x^2+3 x^3+4 x^4+x^5-3 x^6-4 x^7$ \\
$1$ & $-1+x+x^2-x^{29}$ & $x-x^3-x^4+x^6+x^7$ \\
$2$ & $-3+x-x^2+3 x^3$ & $-3+x-x^2+3 x^3$ \\
$3$ & $-2-x+x^2-x^3+x^4+2 x^5$ & $-2-x+x^2-x^3+x^4+2 x^5$ \\
$4$ & $-2-x^2+x^3-x^4+x^5+2 x^7$ & $-2-x^2+x^3-x^4+x^5+2 x^7$ \\
$5$ & $-2-x^3+x^4-x^5+x^6+2 x^9$ & $-2 x^2-3 x^3+x^4-x^5+3 x^6+2 x^7$ \\
$6$ & $-2-x^4+x^5-x^6+x^7+2 x^{11}$ & $-2-2 x-x^4+x^5+x^6+x^7$ \\
$7$ & $-2-x^5+x^6-x^7+x^8+2 x^{13}$ & $-5-3 x+x^3+3 x^4+2 x^5+x^6-4 x^7$ \\
$8$ & $-2-x^6+x^7-x^8+x^9+2 x^{15}$ & $-2+x-x^2-2 x^3-x^4-x^5+3 x^7$ \\
$9$ & $-2-x^7+x^8-x^9+x^{10}+2 x^{17}$ & $-5-x-x^2+2 x^3+x^4+2 x^5-x^6-3 x^7$ \\
$10$ & $-2-x^8+x^9-x^{10}+x^{11}+2 x^{19}$ & $1-x^2-2 x^3-3 x^4-2 x^5+2 x^6+2 x^7$ \\
$11$ & $-2-x^9+x^{10}-x^{11}+x^{12}+2 x^{21}$ & $-4+x+x^3+x^5-4 x^6$ \\
$12$ & $-2-x^{10}+x^{11}-x^{12}+x^{13}+2 x^{23}$ & $x^2-2 x^3-x^4-2 x^5+x^6$ \\
$13$ & $-2-x^{11}+x^{12}-x^{13}+x^{14}+2 x^{25}$ & $2+2 x-2 x^2-x^3-2 x^4-3 x^5+3 x^7$ \\
$14$ & $-2-x^{12}+x^{13}-x^{14}+x^{15}+2 x^{27}$ & $-5-x+4 x^2+x^3+2 x^4+x^5-x^6-5 x^7$ \\
$15$ & $-2-x^{13}+x^{14}-x^{15}+x^{16}+2 x^{29}$ & $-1+x^2+x^3-x^5-x^6$ \\
$16$ & $-2+2 x-x^{14}+x^{15}-x^{16}+x^{17}$ & $-4+3 x+x^3+x^4-x^6-x^7$ \\
$17$ & $-2+2 x^3-x^{15}+x^{16}-x^{17}+x^{18}$ & $-1-x+x^2+x^3$ \\
$18$ & $-2+2 x^5-x^{16}+x^{17}-x^{18}+x^{19}$ & $-2+x-x^2+x^3-x^4+2 x^5$ \\
$19$ & $-2+2 x^7-x^{17}+x^{18}-x^{19}+x^{20}$ & $-2+x^2-x^3+x^4-x^5+2 x^7$ \\
$20$ & $-2+2 x^9-x^{18}+x^{19}-x^{20}+x^{21}$ & $-2 x^2-x^3-x^4+x^5+x^6+2 x^7$ \\
$21$ & $-2+2 x^{11}-x^{19}+x^{20}-x^{21}+x^{22}$ & $-2-2 x+x^4-x^5+3 x^6-x^7$ \\
$22$ & $-2+2 x^{13}-x^{20}+x^{21}-x^{22}+x^{23}$ & $-3-x-x^3+x^4+2 x^5-x^6$ \\
$23$ & $-2+2 x^{15}-x^{21}+x^{22}-x^{23}+x^{24}$ & $-6-x+x^2+2 x^3+x^4+x^5-3 x^7$ \\
$24$ & $-2+2 x^{17}-x^{22}+x^{23}-x^{24}+x^{25}$ & $1+x-3 x^2-2 x^3-x^4-2 x^5+x^6+3 x^7$ \\
$25$ & $-2+2 x^{19}-x^{23}+x^{24}-x^{25}+x^{26}$ & $-5+x^2+2 x^3-x^4+2 x^5-2 x^6-2 x^7$ \\
$26$ & $-2+2 x^{21}-x^{24}+x^{25}-x^{26}+x^{27}$ & $-x-x^3-x^5$ \\
$27$ & $-2+2 x^{23}-x^{25}+x^{26}-x^{27}+x^{28}$ & $4 x-x^2-2 x^3-3 x^4-2 x^5-x^6+4 x^7$ \\
$28$ & $-2+2 x^{25}-x^{26}+x^{27}-x^{28}+x^{29}$ & $-2-2 x+2 x^2+x^3+2 x^4-x^5-3 x^7$ \\
$29$ & $-1+x^{27}+x^{28}-x^{29}$ & $1+x-x^3-2 x^4-x^5+x^6+x^7$ \\
\bottomrule
\end{longtable}
}

\section{\texorpdfstring{$R_t^{\bmod b}(x) \bmod{\Phi_b(x)}$}{Rt remainders} table}\label{appendix_rt}

{\scriptsize
\begin{longtable}{lll}
\toprule $t \bmod 3$ & $R_t^{\bmod 3}(x)$ & $R_t^{\bmod 3}(x) \bmod{\Phi_3(x)}$ \\
\midrule
$0$ & $-5+5 x^2$ & $-10-5 x$ \\
$1$ & $1-x$ & $1-x$ \\
$2$ & $3 x-3 x^2$ & $3+6 x$ \\
\midrule $t \bmod 5$ & $R_t^{\bmod 5}(x)$ & $R_t^{\bmod 5}(x) \bmod{\Phi_5(x)}$ \\ 
\midrule
$0$ & $-5-x+x^3+5 x^4$ & $-10-6 x-5 x^2-4 x^3$ \\
$1$ & $1-x-x^2+x^4$ & $-2 x-2 x^2-x^3$ \\
$2$ & $-1+3 x-3 x^2+x^3$ & $-1+3 x-3 x^2+x^3$ \\
$3$ & $x+3 x^2-3 x^3-x^4$ & $1+2 x+4 x^2-2 x^3$ \\
$4$ & $-3+3 x^2+3 x^3-3 x^4$ & $3 x+6 x^2+6 x^3$ \\
\midrule $t \bmod 6$ & $R_t^{\bmod 6}(x)$ & $R_t^{\bmod 6}(x) \bmod{\Phi_6(x)}$ \\ 
\midrule
$0$ & $-5-x+x^4+5 x^5$ & $-7 x$ \\
$1$ & $1-x-x^2+x^5$ & $3-3 x$ \\
$2$ & $-1+3 x-3 x^2+x^3$ & $1$ \\
$3$ & $-2+x+3 x^2-3 x^3-x^4+2 x^5$ & $3 x$ \\
$4$ & $-2+2 x+x^2+3 x^3-3 x^4-x^5$ & $-7+7 x$ \\
$5$ & $-3+3 x^3+3 x^4-3 x^5$ & $-9$ \\
\midrule $t \bmod 10$ & $R_t^{\bmod 10}(x)$ & $R_t^{\bmod 10}(x) \bmod{\Phi_{10}(x)}$ \\
\midrule
$0$ & $-5-x+x^8+5 x^9$ & $-6 x+5 x^2-6 x^3$ \\
$1$ & $1-x-x^2+x^9$ & $2-2 x-x^3$ \\
$2$ & $-1+3 x-3 x^2+x^3$ & $-1+3 x-3 x^2+x^3$ \\
$3$ & $-2+x+3 x^2-3 x^3-x^4+2 x^5$ & $-3+4 x^2-4 x^3$ \\
$4$ & $-2+x^2+3 x^3-3 x^4-x^5+2 x^7$ & $2-3 x+2 x^2$ \\
$5$ & $-2+x^3+3 x^4-3 x^5-x^6+2 x^9$ & $2 x-x^2+2 x^3$ \\
$6$ & $-2+2 x+x^4+3 x^5-3 x^6-x^7$ & $-6+6 x+x^3$ \\
$7$ & $-2+2 x^3+x^5+3 x^6-3 x^7-x^8$ & $-3-3 x+3 x^2+3 x^3$ \\
$8$ & $-2+2 x^5+x^6+3 x^7-3 x^8-x^9$ & $-5-4 x^2+4 x^3$ \\
$9$ & $-3+3 x^7+3 x^8-3 x^9$ & $-6+3 x-6 x^2$ \\
\midrule $t \bmod 15$ & $R_t^{\bmod 15}(x)$ & $R_t^{\bmod 15}(x) \bmod{\Phi_{15}(x)}$ \\ 
\midrule
$0$ & $-5-x+x^{13}+5 x^{14}$ & $1-2 x-5 x^2+5 x^3-6 x^4+x^5+5 x^6-6 x^7$ \\
$1$ & $1-x-x^2+x^{14}$ & $2-x-2 x^2+x^3-x^4+x^6-x^7$ \\
$2$ & $-1+3 x-3 x^2+x^3$ & $-1+3 x-3 x^2+x^3$ \\
$3$ & $-2+x+3 x^2-3 x^3-x^4+2 x^5$ & $-2+x+3 x^2-3 x^3-x^4+2 x^5$ \\
$4$ & $-2+x^2+3 x^3-3 x^4-x^5+2 x^7$ & $-2+x^2+3 x^3-3 x^4-x^5+2 x^7$ \\
$5$ & $-2+x^3+3 x^4-3 x^5-x^6+2 x^9$ & $-4+2 x^2-x^3+3 x^4-3 x^5-3 x^6+2 x^7$ \\
$6$ & $-2+x^4+3 x^5-3 x^6-x^7+2 x^{11}$ & $-2-2 x+x^4+3 x^5-5 x^6-x^7$ \\
$7$ & $-2+x^5+3 x^6-3 x^7-x^8+2 x^{13}$ & $1-3 x+x^3-3 x^4+4 x^5+3 x^6-6 x^7$ \\
$8$ & $x^6+3 x^7-3 x^8-x^9$ & $4-3 x-x^2+4 x^3-3 x^4+3 x^5+2 x^6-x^7$ \\
$9$ & $-2+2 x^2+x^7+3 x^8-3 x^9-x^{10}$ & $-1+3 x-x^2+3 x^4-2 x^5+3 x^6+x^7$ \\
$10$ & $-2+2 x^4+x^8+3 x^9-3 x^{10}-x^{11}$ & $-3+2 x+3 x^2-4 x^3+3 x^4+2 x^5-2 x^6+4 x^7$ \\
$11$ & $-2+2 x^6+x^9+3 x^{10}-3 x^{11}-x^{12}$ & $-6+3 x+2 x^2-x^3-3 x^5+4 x^6+2 x^7$ \\
$12$ & $-2+2 x^8+x^{10}+3 x^{11}-3 x^{12}-x^{13}$ & $-6+3 x^2-2 x^3+3 x^4-4 x^5-3 x^6+6 x^7$ \\
$13$ & $-2+2 x^{10}+x^{11}+3 x^{12}-3 x^{13}-x^{14}$ & $-8+2 x-2 x^2-x^3+4 x^4-5 x^5-2 x^6+x^7$ \\
$14$ & $-3+3 x^{12}+3 x^{13}-3 x^{14}$ & $-3-3 x-3 x^3+3 x^5-3 x^6-3 x^7$ \\
\midrule $t \bmod 30$ & $R_t^{\bmod 30}(x)$ & $R_t^{\bmod 30}(x) \bmod{\Phi_{30}(x)}$ \\ 
\midrule
$0$ & $-5-x+x^{28}+5 x^{29}$ & $-9+5 x^2+5 x^3+4 x^4-x^5-5 x^6-4 x^7$ \\
$1$ & $1-x-x^2+x^{29}$ & $-x+x^3+x^4-x^6-x^7$ \\
$2$ & $-1+3 x-3 x^2+x^3$ & $-1+3 x-3 x^2+x^3$ \\
$3$ & $-2+x+3 x^2-3 x^3-x^4+2 x^5$ & $-2+x+3 x^2-3 x^3-x^4+2 x^5$ \\
$4$ & $-2+x^2+3 x^3-3 x^4-x^5+2 x^7$ & $-2+x^2+3 x^3-3 x^4-x^5+2 x^7$ \\
$5$ & $-2+x^3+3 x^4-3 x^5-x^6+2 x^9$ & $-2 x^2-x^3+3 x^4-3 x^5+x^6+2 x^7$ \\
$6$ & $-2+x^4+3 x^5-3 x^6-x^7+2 x^{11}$ & $-2-2 x+x^4+3 x^5-x^6-x^7$ \\
$7$ & $-2+x^5+3 x^6-3 x^7-x^8+2 x^{13}$ & $-3-x-x^3+x^4+2 x^5+3 x^6-4 x^7$ \\
$8$ & $-2+x^6+3 x^7-3 x^8-x^9+2 x^{15}$ & $-2+3 x+x^2-2 x^3-3 x^4-3 x^5+5 x^7$ \\
$9$ & $-2+x^7+3 x^8-3 x^9-x^{10}+2 x^{17}$ & $-7-3 x+x^2+6 x^3+3 x^4+2 x^5-3 x^6-5 x^7$ \\
$10$ & $-2+x^8+3 x^9-3 x^{10}-x^{11}+2 x^{19}$ & $3-3 x^2-2 x^3-x^4-2 x^5+2 x^6+2 x^7$ \\
$11$ & $-2+x^9+3 x^{10}-3 x^{11}-x^{12}+2 x^{21}$ & $-4+3 x-x^3+3 x^5-4 x^6$ \\
$12$ & $-2+x^{10}+3 x^{11}-3 x^{12}-x^{13}+2 x^{23}$ & $3 x^2-2 x^3-3 x^4-2 x^5+3 x^6$ \\
$13$ & $-2+x^{11}+3 x^{12}-3 x^{13}-x^{14}+2 x^{25}$ & $2+2 x-2 x^2+x^3-2 x^4-5 x^5+5 x^7$ \\
$14$ & $-2+x^{12}+3 x^{13}-3 x^{14}-x^{15}+2 x^{27}$ & $-7-3 x+4 x^2+3 x^3+6 x^4+3 x^5-3 x^6-7 x^7$ \\
$15$ & $-2+x^{13}+3 x^{14}-3 x^{15}-x^{16}+2 x^{29}$ & $1-x^2-x^3+x^5+x^6$ \\
$16$ & $-2+2 x+x^{14}+3 x^{15}-3 x^{16}-x^{17}$ & $-4+5 x-x^3-x^4+x^6+x^7$ \\
$17$ & $-2+2 x^3+x^{15}+3 x^{16}-3 x^{17}-x^{18}$ & $-3-3 x+3 x^2+3 x^3$ \\
$18$ & $-2+2 x^5+x^{16}+3 x^{17}-3 x^{18}-x^{19}$ & $-2-x-3 x^2+3 x^3+x^4+2 x^5$ \\
$19$ & $-2+2 x^7+x^{17}+3 x^{18}-3 x^{19}-x^{20}$ & $-2-x^2-3 x^3+3 x^4+x^5+2 x^7$ \\
$20$ & $-2+2 x^9+x^{18}+3 x^{19}-3 x^{20}-x^{21}$ & $-2 x^2-3 x^3-3 x^4+3 x^5+3 x^6+2 x^7$ \\
$21$ & $-2+2 x^{11}+x^{19}+3 x^{20}-3 x^{21}-x^{22}$ & $-2-2 x-x^4-3 x^5+5 x^6+x^7$ \\
$22$ & $-2+2 x^{13}+x^{20}+3 x^{21}-3 x^{22}-x^{23}$ & $-5-3 x+x^3+3 x^4+2 x^5-3 x^6$ \\
$23$ & $-2+2 x^{15}+x^{21}+3 x^{22}-3 x^{23}-x^{24}$ & $-6-3 x-x^2+2 x^3+3 x^4+3 x^5-5 x^7$ \\
$24$ & $-2+2 x^{17}+x^{22}+3 x^{23}-3 x^{24}-x^{25}$ & $3+3 x-5 x^2-6 x^3-3 x^4-2 x^5+3 x^6+5 x^7$ \\
$25$ & $-2+2 x^{19}+x^{23}+3 x^{24}-3 x^{25}-x^{26}$ & $-7+3 x^2+2 x^3-3 x^4+2 x^5-2 x^6-2 x^7$ \\
$26$ & $-2+2 x^{21}+x^{24}+3 x^{25}-3 x^{26}-x^{27}$ & $-3 x+x^3-3 x^5$ \\
$27$ & $-2+2 x^{23}+x^{25}+3 x^{26}-3 x^{27}-x^{28}$ & $4 x-3 x^2-2 x^3-x^4-2 x^5-3 x^6+4 x^7$ \\
$28$ & $-2+2 x^{25}+x^{26}+3 x^{27}-3 x^{28}-x^{29}$ & $-2-2 x+2 x^2-x^3+2 x^4+x^5-5 x^7$ \\
$29$ & $-3+3 x^{27}+3 x^{28}-3 x^{29}$ & $3+3 x-3 x^3-6 x^4-3 x^5+3 x^6+3 x^7$ \\
\bottomrule
\end{longtable}
}

\section{\texorpdfstring{$U_t^{\bmod b}(x) \bmod{\Phi_b(x)}$}{Ut remainders} table}\label{appendix_ut}

{\scriptsize
\begin{longtable}{lll}
\toprule $t \bmod 3$ & $U_t^{\bmod 3}(x)$ & $U_t^{\bmod 3}(x) \bmod{\Phi_3(x)}$ \\
\midrule
$0$ & $-3 x+3 x^2$ & $-3-6 x$ \\
$1$ & $1-x$ & $1-x$ \\
$2$ & $1-x^2$ & $2+x$ \\
\midrule $t \bmod 5$ & $U_t^{\bmod 5}(x)$ & $U_t^{\bmod 5}(x) \bmod{\Phi_5(x)}$ \\ 
\midrule
$0$ & $-5 x+5 x^4$ & $-5-10 x-5 x^2-5 x^3$ \\
$1$ & $x-x^3$ & $x-x^3$ \\
$2$ & $-3-2 x+2 x^2+3 x^3$ & $-3-2 x+2 x^2+3 x^3$ \\
$3$ & $3-3 x^2$ & $3-3 x^2$ \\
$4$ & $2-2 x+3 x^2-3 x^4$ & $5+x+6 x^2+3 x^3$ \\
\midrule $t \bmod 6$ & $U_t^{\bmod 6}(x)$ & $U_t^{\bmod 6}(x) \bmod{\Phi_6(x)}$ \\ 
\midrule
$0$ & $-4 x-x^2+x^4+4 x^5$ & $5-10 x$ \\
$1$ & $1-x^4$ & $1+x$ \\
$2$ & $-1+x^2+2 x^3-2 x^5$ & $-6+3 x$ \\
$3$ & $-4 x-x^2+x^4+4 x^5$ & $5-10 x$ \\
$4$ & $1-x^4$ & $1+x$ \\
$5$ & $-1+x^2+2 x^3-2 x^5$ & $-6+3 x$ \\
\midrule $t \bmod 10$ & $U_t^{\bmod 10}(x)$ & $U_t^{\bmod 10}(x) \bmod{\Phi_{10}(x)}$ \\
\midrule
$0$ & $-4 x+x^4-x^6+4 x^9$ & $3-6 x+3 x^2-3 x^3$ \\
$1$ & $x^6-x^8$ & $-x+x^3$ \\
$2$ & $-1-2 x+2 x^3-2 x^5+2 x^7+x^8$ & $1-2 x-2 x^2+x^3$ \\
$3$ & $1-x^2+2 x^5-2 x^7$ & $-1+x^2$ \\
$4$ & $-2 x+x^2-x^4+2 x^5+2 x^7-2 x^9$ & $-3-x-2 x^2+x^3$ \\
$5$ & $-4 x+x^4-x^6+4 x^9$ & $3-6 x+3 x^2-3 x^3$ \\
$6$ & $x^6-x^8$ & $-x+x^3$ \\
$7$ & $-1-2 x+2 x^3-2 x^5+2 x^7+x^8$ & $1-2 x-2 x^2+x^3$ \\
$8$ & $1-x^2+2 x^5-2 x^7$ & $-1+x^2$ \\
$9$ & $-2 x+x^2-x^4+2 x^5+2 x^7-2 x^9$ & $-3-x-2 x^2+x^3$ \\
\midrule $t \bmod 15$ & $U_t^{\bmod 15}(x)$ & $U_t^{\bmod 15}(x) \bmod{\Phi_{15}(x)}$ \\ 
\midrule
$0$ & $-4 x+x^4-x^{11}+4 x^{14}$ & $4-3 x-4 x^2+4 x^3-3 x^4+5 x^6-4 x^7$ \\
$1$ & $x^6-x^{13}$ & $-1+x+x^4-x^5+x^6+x^7$ \\
$2$ & $-1-2 x+2 x^3-2 x^5+2 x^7+x^8$ & $-2-x+x^3+x^4-3 x^5+3 x^7$ \\
$3$ & $-2 x-x^2+2 x^5-2 x^7+x^{10}+2 x^{11}$ & $-1-4 x-x^2+x^5-2 x^6-2 x^7$ \\
$4$ & $2-2 x-x^4+2 x^7-2 x^9+x^{12}$ & $4-2 x-3 x^2+2 x^3-x^4+2 x^6-x^7$ \\
$5$ & $-2 x+2 x^4-x^6+2 x^9-2 x^{11}+x^{14}$ & $-1+x^2-x^3+x^4+x^7$ \\
$6$ & $-x+x^8+2 x^{11}-2 x^{13}$ & $-3-x^3+3 x^4-3 x^5-2 x^6+3 x^7$ \\
$7$ & $-2-2 x+x^3-x^{10}+2 x^{12}+2 x^{13}$ & $1-4 x-2 x^2+x^3-2 x^4+3 x^5-4 x^7$ \\
$8$ & $2-2 x^2+x^5-x^{12}$ & $2-x^2+x^5+x^7$ \\
$9$ & $-2 x+2 x^2-2 x^4+2 x^5+x^7-x^{14}$ & $-1-2 x+3 x^2-x^3-x^4+2 x^5-x^6+2 x^7$ \\
$10$ & $-3 x+2 x^4-2 x^6+3 x^9$ & $-3-3 x+3 x^2-3 x^3+2 x^4-5 x^6+3 x^7$ \\
$11$ & $-2 x-x^3+2 x^6-2 x^8+x^{11}+2 x^{13}$ & $4-7 x+x^3-4 x^4+4 x^5+x^6-4 x^7$ \\
$12$ & $-2 x+2 x^2-x^5+2 x^8-2 x^{10}+x^{13}$ & $1-x+2 x^2-2 x^3+x^4+x^7$ \\
$13$ & $1-2 x+2 x^6-x^7+2 x^{10}-2 x^{12}$ & $-1-2 x+2 x^2-2 x^5+2 x^6+x^7$ \\
$14$ & $-2 x+x^2-x^9+2 x^{10}+2 x^{12}-2 x^{14}$ & $-3-2 x-x^3+2 x^4-2 x^5-x^6-x^7$ \\
\midrule $t \bmod 30$ & $U_t^{\bmod 30}(x)$ & $U_t^{\bmod 30}(x) \bmod{\Phi_{30}(x)}$ \\ 
\midrule
$0$ & $-4 x+x^4-x^{26}+4 x^{29}$ & $-4-5 x+4 x^2+4 x^3+5 x^4-3 x^6-4 x^7$ \\
$1$ & $x^6-x^{28}$ & $-1-x+x^4+x^5+x^6-x^7$ \\
$2$ & $-1-2 x+2 x^3-2 x^5+2 x^7+x^8$ & $-2-3 x+3 x^3+x^4-x^5+x^7$ \\
$3$ & $-2 x-x^2+2 x^5-2 x^7+x^{10}+2 x^{11}$ & $-1-4 x-x^2+3 x^5+2 x^6-2 x^7$ \\
$4$ & $-2 x-x^4+2 x^7-2 x^9+x^{12}+2 x^{15}$ & $-4-2 x+x^2+2 x^3-x^4-2 x^6+x^7$ \\
$5$ & $-2 x-x^6+2 x^9-2 x^{11}+x^{14}+2 x^{19}$ & $3-3 x^2-3 x^3-3 x^4+3 x^7$ \\
$6$ & $-2 x-x^8+2 x^{11}-2 x^{13}+x^{16}+2 x^{23}$ & $5-3 x^3-5 x^4-5 x^5+2 x^6+5 x^7$ \\
$7$ & $-2 x-x^{10}+2 x^{13}-2 x^{15}+x^{18}+2 x^{27}$ & $1-4 x+2 x^2-x^3+2 x^4+x^5-4 x^7$ \\
$8$ & $-x^{12}+2 x^{15}-2 x^{17}+x^{20}$ & $-2+3 x^2-x^5-x^7$ \\
$9$ & $-2 x+2 x^5-x^{14}+2 x^{17}-2 x^{19}+x^{22}$ & $-1-2 x-x^2+x^3+3 x^4+2 x^5-x^6-2 x^7$ \\
$10$ & $-2 x+2 x^9-x^{16}+2 x^{19}-2 x^{21}+x^{24}$ & $1-x-x^2-x^3-2 x^4+3 x^6+x^7$ \\
$11$ & $-2 x+2 x^{13}-x^{18}+2 x^{21}-2 x^{23}+x^{26}$ & $-4-5 x+3 x^3+4 x^4+4 x^5-3 x^6-4 x^7$ \\
$12$ & $-2 x+2 x^{17}-x^{20}+2 x^{23}-2 x^{25}+x^{28}$ & $1+x-2 x^2-2 x^3-3 x^4+3 x^7$ \\
$13$ & $1-2 x+2 x^{21}-x^{22}+2 x^{25}-2 x^{27}$ & $3-2 x-2 x^2-2 x^5-2 x^6+3 x^7$ \\
$14$ & $-2 x+x^2-x^{24}+2 x^{25}+2 x^{27}-2 x^{29}$ & $5-2 x-3 x^3-2 x^4-2 x^5+3 x^6+x^7$ \\
$15$ & $-4 x+x^4-x^{26}+4 x^{29}$ & $-4-5 x+4 x^2+4 x^3+5 x^4-3 x^6-4 x^7$ \\
$16$ & $x^6-x^{28}$ & $-1-x+x^4+x^5+x^6-x^7$ \\
$17$ & $-1-2 x+2 x^3-2 x^5+2 x^7+x^8$ & $-2-3 x+3 x^3+x^4-x^5+x^7$ \\
$18$ & $-2 x-x^2+2 x^5-2 x^7+x^{10}+2 x^{11}$ & $-1-4 x-x^2+3 x^5+2 x^6-2 x^7$ \\
$19$ & $-2 x-x^4+2 x^7-2 x^9+x^{12}+2 x^{15}$ & $-4-2 x+x^2+2 x^3-x^4-2 x^6+x^7$ \\
$20$ & $-2 x-x^6+2 x^9-2 x^{11}+x^{14}+2 x^{19}$ & $3-3 x^2-3 x^3-3 x^4+3 x^7$ \\
$21$ & $-2 x-x^8+2 x^{11}-2 x^{13}+x^{16}+2 x^{23}$ & $5-3 x^3-5 x^4-5 x^5+2 x^6+5 x^7$ \\
$22$ & $-2 x-x^{10}+2 x^{13}-2 x^{15}+x^{18}+2 x^{27}$ & $1-4 x+2 x^2-x^3+2 x^4+x^5-4 x^7$ \\
$23$ & $-x^{12}+2 x^{15}-2 x^{17}+x^{20}$ & $-2+3 x^2-x^5-x^7$ \\
$24$ & $-2 x+2 x^5-x^{14}+2 x^{17}-2 x^{19}+x^{22}$ & $-1-2 x-x^2+x^3+3 x^4+2 x^5-x^6-2 x^7$ \\
$25$ & $-2 x+2 x^9-x^{16}+2 x^{19}-2 x^{21}+x^{24}$ & $1-x-x^2-x^3-2 x^4+3 x^6+x^7$ \\
$26$ & $-2 x+2 x^{13}-x^{18}+2 x^{21}-2 x^{23}+x^{26}$ & $-4-5 x+3 x^3+4 x^4+4 x^5-3 x^6-4 x^7$ \\
$27$ & $-2 x+2 x^{17}-x^{20}+2 x^{23}-2 x^{25}+x^{28}$ & $1+x-2 x^2-2 x^3-3 x^4+3 x^7$ \\
$28$ & $1-2 x+2 x^{21}-x^{22}+2 x^{25}-2 x^{27}$ & $3-2 x-2 x^2-2 x^5-2 x^6+3 x^7$ \\
$29$ & $-2 x+x^2-x^{24}+2 x^{25}+2 x^{27}-2 x^{29}$ & $5-2 x-3 x^3-2 x^4-2 x^5+3 x^6+x^7$ \\
\bottomrule
\end{longtable}
}

\section{\texorpdfstring{$W_t^{\bmod b}(x) \bmod{\Phi_b(x)}$}{Wt remainders} table}\label{appendix_wt}

{\scriptsize
\begin{longtable}{lll}
\toprule $t \bmod 3$ & $U_t^{\bmod 3}(x)$ & $U_t^{\bmod 3}(x) \bmod{\Phi_3(x)}$ \\
\midrule
$0$ & $-5 x+5 x^2$ & $-5-10 x$ \\
$1$ & $-1+x$ & $-1+x$ \\
$2$ & $3-3 x^2$ & $6+3 x$ \\
\midrule $t \bmod 5$ & $U_t^{\bmod 5}(x)$ & $U_t^{\bmod 5}(x) \bmod{\Phi_5(x)}$ \\ 
\midrule
$0$ & $-3 x+3 x^4$ & $-3-6 x-3 x^2-3 x^3$ \\
$1$ & $-x+x^3$ & $-x+x^3$ \\
$2$ & $-1-2 x+2 x^2+x^3$ & $-1-2 x+2 x^2+x^3$ \\
$3$ & $1-x^2$ & $1-x^2$ \\
$4$ & $2-2 x+x^2-x^4$ & $3-x+2 x^2+x^3$ \\
\midrule $t \bmod 6$ & $U_t^{\bmod 6}(x)$ & $U_t^{\bmod 6}(x) \bmod{\Phi_6(x)}$ \\ 
\midrule
$0$ & $-4 x+x^2-x^4+4 x^5$ & $3-6 x$ \\
$1$ & $-1+x^4$ & $-1-x$ \\
$2$ & $1-x^2+2 x^3-2 x^5$ & $-2+x$ \\
$3$ & $-4 x+x^2-x^4+4 x^5$ & $3-6 x$ \\
$4$ & $-1+x^4$ & $-1-x$ \\
$5$ & $1-x^2+2 x^3-2 x^5$ & $-2+x$ \\
\midrule $t \bmod 10$ & $U_t^{\bmod 10}(x)$ & $U_t^{\bmod 10}(x) \bmod{\Phi_{10}(x)}$ \\
\midrule
$0$ & $-4 x-x^4+x^6+4 x^9$ & $5-10 x+5 x^2-5 x^3$ \\
$1$ & $-x^6+x^8$ & $x-x^3$ \\
$2$ & $1-2 x+2 x^3-2 x^5+2 x^7-x^8$ & $3-2 x-2 x^2+3 x^3$ \\
$3$ & $-1+x^2+2 x^5-2 x^7$ & $-3+3 x^2$ \\
$4$ & $-2 x-x^2+x^4+2 x^5+2 x^7-2 x^9$ & $-5+x-6 x^2+3 x^3$ \\
$5$ & $-4 x-x^4+x^6+4 x^9$ & $5-10 x+5 x^2-5 x^3$ \\
$6$ & $-x^6+x^8$ & $x-x^3$ \\
$7$ & $1-2 x+2 x^3-2 x^5+2 x^7-x^8$ & $3-2 x-2 x^2+3 x^3$ \\
$8$ & $-1+x^2+2 x^5-2 x^7$ & $-3+3 x^2$ \\
$9$ & $-2 x-x^2+x^4+2 x^5+2 x^7-2 x^9$ & $-5+x-6 x^2+3 x^3$ \\
\midrule $t \bmod 15$ & $U_t^{\bmod 15}(x)$ & $U_t^{\bmod 15}(x) \bmod{\Phi_{15}(x)}$ \\ 
\midrule
$0$ & $-4 x-x^4+x^{11}+4 x^{14}$ & $4-5 x-4 x^2+4 x^3-5 x^4+3 x^6-4 x^7$ \\
$1$ & $-x^6+x^{13}$ & $1-x-x^4+x^5-x^6-x^7$ \\
$2$ & $1-2 x+2 x^3-2 x^5+2 x^7-x^8$ & $2-3 x+3 x^3-x^4-x^5+x^7$ \\
$3$ & $-2 x+x^2+2 x^5-2 x^7-x^{10}+2 x^{11}$ & $1-4 x+x^2+3 x^5-2 x^6-2 x^7$ \\
$4$ & $2-2 x+x^4+2 x^7-2 x^9-x^{12}$ & $4-2 x-x^2+2 x^3+x^4+2 x^6+x^7$ \\
$5$ & $-2 x+2 x^4+x^6+2 x^9-2 x^{11}-x^{14}$ & $-3+3 x^2-3 x^3+3 x^4+3 x^7$ \\
$6$ & $-3 x+3 x^8+2 x^{11}-2 x^{13}$ & $-5-3 x^3+5 x^4-5 x^5-2 x^6+5 x^7$ \\
$7$ & $-2-2 x-x^3+x^{10}+2 x^{12}+2 x^{13}$ & $-1-4 x-2 x^2-x^3-2 x^4+x^5-4 x^7$ \\
$8$ & $2-2 x^2-x^5+x^{12}$ & $2-3 x^2-x^5-x^7$ \\
$9$ & $-2 x+2 x^2-2 x^4+2 x^5-x^7+x^{14}$ & $1-2 x+x^2+x^3-3 x^4+2 x^5+x^6-2 x^7$ \\
$10$ & $-x+2 x^4-2 x^6+x^9$ & $-1-x+x^2-x^3+2 x^4-3 x^6+x^7$ \\
$11$ & $-2 x+x^3+2 x^6-2 x^8-x^{11}+2 x^{13}$ & $4-5 x+3 x^3-4 x^4+4 x^5+3 x^6-4 x^7$ \\
$12$ & $-2 x+2 x^2+x^5+2 x^8-2 x^{10}-x^{13}$ & $-1+x+2 x^2-2 x^3+3 x^4+3 x^7$ \\
$13$ & $-1-2 x+2 x^6+x^7+2 x^{10}-2 x^{12}$ & $-3-2 x+2 x^2-2 x^5+2 x^6+3 x^7$ \\
$14$ & $-2 x-x^2+x^9+2 x^{10}+2 x^{12}-2 x^{14}$ & $-5-2 x-3 x^3+2 x^4-2 x^5-3 x^6+x^7$ \\
\midrule $t \bmod 30$ & $U_t^{\bmod 30}(x)$ & $U_t^{\bmod 30}(x) \bmod{\Phi_{30}(x)}$ \\ 
\midrule
$0$ & $-4 x-x^4+x^{26}+4 x^{29}$ & $-4-3 x+4 x^2+4 x^3+3 x^4-5 x^6-4 x^7$ \\
$1$ & $-x^6+x^{28}$ & $1+x-x^4-x^5-x^6+x^7$ \\
$2$ & $1-2 x+2 x^3-2 x^5+2 x^7-x^8$ & $2-x+x^3-x^4-3 x^5+3 x^7$ \\
$3$ & $-2 x+x^2+2 x^5-2 x^7-x^{10}+2 x^{11}$ & $1-4 x+x^2+x^5+2 x^6-2 x^7$ \\
$4$ & $-2 x+x^4+2 x^7-2 x^9-x^{12}+2 x^{15}$ & $-4-2 x+3 x^2+2 x^3+x^4-2 x^6-x^7$ \\
$5$ & $-2 x+x^6+2 x^9-2 x^{11}-x^{14}+2 x^{19}$ & $1-x^2-x^3-x^4+x^7$ \\
$6$ & $-2 x+x^8+2 x^{11}-2 x^{13}-x^{16}+2 x^{23}$ & $3-x^3-3 x^4-3 x^5+2 x^6+3 x^7$ \\
$7$ & $-2 x+x^{10}+2 x^{13}-2 x^{15}-x^{18}+2 x^{27}$ & $-1-4 x+2 x^2+x^3+2 x^4+3 x^5-4 x^7$ \\
$8$ & $x^{12}+2 x^{15}-2 x^{17}-x^{20}$ & $-2+x^2+x^5+x^7$ \\
$9$ & $-2 x+2 x^5+x^{14}+2 x^{17}-2 x^{19}-x^{22}$ & $1-2 x-3 x^2-x^3+x^4+2 x^5+x^6+2 x^7$ \\
$10$ & $-2 x+2 x^9+x^{16}+2 x^{19}-2 x^{21}-x^{24}$ & $3-3 x-3 x^2-3 x^3-2 x^4+5 x^6+3 x^7$ \\
$11$ & $-2 x+2 x^{13}+x^{18}+2 x^{21}-2 x^{23}-x^{26}$ & $-4-7 x+x^3+4 x^4+4 x^5-x^6-4 x^7$ \\
$12$ & $-2 x+2 x^{17}+x^{20}+2 x^{23}-2 x^{25}-x^{28}$ & $-1-x-2 x^2-2 x^3-x^4+x^7$ \\
$13$ & $-1-2 x+2 x^{21}+x^{22}+2 x^{25}-2 x^{27}$ & $1-2 x-2 x^2-2 x^5-2 x^6+x^7$ \\
$14$ & $-2 x-x^2+x^{24}+2 x^{25}+2 x^{27}-2 x^{29}$ & $3-2 x-x^3-2 x^4-2 x^5+x^6-x^7$ \\
$15$ & $-4 x-x^4+x^{26}+4 x^{29}$ & $-4-3 x+4 x^2+4 x^3+3 x^4-5 x^6-4 x^7$ \\
$16$ & $-x^6+x^{28}$ & $1+x-x^4-x^5-x^6+x^7$ \\
$17$ & $1-2 x+2 x^3-2 x^5+2 x^7-x^8$ & $2-x+x^3-x^4-3 x^5+3 x^7$ \\
$18$ & $-2 x+x^2+2 x^5-2 x^7-x^{10}+2 x^{11}$ & $1-4 x+x^2+x^5+2 x^6-2 x^7$ \\
$19$ & $-2 x+x^4+2 x^7-2 x^9-x^{12}+2 x^{15}$ & $-4-2 x+3 x^2+2 x^3+x^4-2 x^6-x^7$ \\
$20$ & $-2 x+x^6+2 x^9-2 x^{11}-x^{14}+2 x^{19}$ & $1-x^2-x^3-x^4+x^7$ \\
$21$ & $-2 x+x^8+2 x^{11}-2 x^{13}-x^{16}+2 x^{23}$ & $3-x^3-3 x^4-3 x^5+2 x^6+3 x^7$ \\
$22$ & $-2 x+x^{10}+2 x^{13}-2 x^{15}-x^{18}+2 x^{27}$ & $-1-4 x+2 x^2+x^3+2 x^4+3 x^5-4 x^7$ \\
$23$ & $x^{12}+2 x^{15}-2 x^{17}-x^{20}$ & $-2+x^2+x^5+x^7$ \\
$24$ & $-2 x+2 x^5+x^{14}+2 x^{17}-2 x^{19}-x^{22}$ & $1-2 x-3 x^2-x^3+x^4+2 x^5+x^6+2 x^7$ \\
$25$ & $-2 x+2 x^9+x^{16}+2 x^{19}-2 x^{21}-x^{24}$ & $3-3 x-3 x^2-3 x^3-2 x^4+5 x^6+3 x^7$ \\
$26$ & $-2 x+2 x^{13}+x^{18}+2 x^{21}-2 x^{23}-x^{26}$ & $-4-7 x+x^3+4 x^4+4 x^5-x^6-4 x^7$ \\
$27$ & $-2 x+2 x^{17}+x^{20}+2 x^{23}-2 x^{25}-x^{28}$ & $-1-x-2 x^2-2 x^3-x^4+x^7$ \\
$28$ & $-1-2 x+2 x^{21}+x^{22}+2 x^{25}-2 x^{27}$ & $1-2 x-2 x^2-2 x^5-2 x^6+x^7$ \\
$29$ & $-2 x-x^2+x^{24}+2 x^{25}+2 x^{27}-2 x^{29}$ & $3-2 x-x^3-2 x^4-2 x^5+x^6-x^7$ \\
\bottomrule
\end{longtable}
}

\section{Roots of certain polynomials}\label{problematic_polynomials}

In this appendix section, we will prove that some required polynomials truly do not possess any root that is a root of unity as well, as claimed multiple times throughout Section \ref{d2_family}. The polynomials of interest shall be the four following ones:
\begin{align*}
    Z_1(x) &= 2x^{10} + x^5 + 2, & Z_2(x) &= 2x^{10} - x^5 + 2,\\
    Z_3(x) &= 2x^6 + x^3 + 2, & Z_4(x) &= 2x^6 - x^3 + 2 .
\end{align*}
First and foremost, let us define the next two additional polynomials:
\begin{align*}
    Z'(x) &= 2x^2 + x + 2, & Z''(x) &= 2x^2 - x + 2 .
\end{align*}
It is clear that the roots of $Z_1(x)$ are actually the fifth roots of the roots of $Z'(x)$, while the roots of $Z_2(x)$ are the fifth roots of the roots of $Z''(x)$. Similarly, the roots of $Z_3(x)$ represent the third roots of the roots of $Z'(x)$, while the roots of $Z_4(x)$ represent the third roots of the roots of $Z''(x)$. Thus, in order to prove that neither of the four polynomials $Z_1(x), Z_2(x), Z_3(x), Z_4(x)$ contains a root that is a root of unity, it is sufficient to show that neither of the two polynomials $Z'(x), Z''(x)$ has such a root.

Now, if the polynomial $Z'(x)$ were to have a root that is a root of unity, then this root would surely represent a primitive $b$-th root of unity for some $b \in \mathbb{N}$, which would imply that $\Phi_b(x) \mid Z'(x)$ must hold. However, it is simple to show that the polynomial $Z'(x)$ is not divisible by any cyclotomic polynomial. The same logic can be applied to the $Z''(x)$ polynomial.

It is clear that neither $Z'(x)$ nor $Z''(x)$ can be divisible by a cyclotomic polynomial whose degree is greater than two. However, given the fact that $\deg \Phi_b = \varphi(b)$ for each $b \in \mathbb{N}$, where $\varphi$ represents Euler's totient function, it is easy to deduct that only five cyclotomic polynomials have a degree that is not greater than two:
\begin{align*}
    \Phi_1(x) &= x - 1,\\
    \Phi_2(x) &= x + 1,\\
    \Phi_3(x) &= x^2 + x + 1,\\
    \Phi_4(x) &= x^2 + 1,\\
    \Phi_6(x) &= x^2 - x + 1 .
\end{align*}
It is trivial to check that neither $Z'(x)$ nor $Z''(x)$ is divisible by any of these five polynomials, which completes the proof. \hfill\qed

\end{document}